\newcommand{\N}{\mathbb{N}}
\newcommand{\R}{\mathbb{R}}
\newcommand{\PP}{\mathbb{P}}
\newcommand{\pd}{\partial}
\newcommand{\U}{\mathcal{U}}
\newcommand{\K}{\mathcal{K}}
\newcommand{\M}{\mathcal{M}}
\newcommand{\Z}{\mathcal{Z}}
\newcommand{\So}{\mathcal{S}}
\newcommand{\pdnu}{\pd_{\bm{\nu}}}
\newcommand{\eps}{\varepsilon}
\newcommand{\e}[1]{#1^{\eps}}
\newcommand{\dd}{\, d}
\newcommand{\dx}{\, dx}
\newcommand{\dt}{\, dt}
\newcommand{\ds}{\, ds}
\newcommand{\dy}{\, dy}
\newcommand{\dz}{\, dz}
\newcommand{\dhaus}{\, d \Gamma}
\newcommand{\EE}{\mathcal{E}}
\newcommand{\GG}{\mathcal{G}}
\newcommand{\HH}{\mathcal{H}}
\newcommand{\NN}{\mathcal{N}}
\newcommand{\der}{\mathrm{D}}
\newcommand{\chara}{\textbf{1}}
\newcommand{\id}{\, \bm{\mathrm{I}}\,}
\newcommand{\abs}[1]{\left| #1 \right|}
\newcommand{\norm}[1]{\| #1 \|}
\newcommand{\bignorm}[1]{\left\| #1 \right\|}
\newcommand{\inner}[2]{\langle #1 , #2 \rangle}
\newcommand{\biginner}[2]{\left\langle #1 , #2 \right\rangle}
\newcommand{\tr}[1]{\mathrm{tr} ( #1 )}
\newcommand{\Laplace}{\Delta}
\newcommand{\mean}[1]{\overline{#1}}
\renewcommand{\div}{\, \mathrm{div}\,}
\newcommand{\jump}[1]{\left [#1 \right ]_{-}^{+}}
\newcommand{\velo}{\mathcal{V}}
\newtheorem{thm}{Theorem}[]
\newtheorem{lem}{Lemma}[section]
\newtheorem{remark}{Remark}[section]
\numberwithin{equation}{section}
\begin{document}

\title{Sharp interface limit of a non-mass-conserving Cahn--Hilliard system with source terms and non-solenoidal Darcy flow}

\author{Kei Fong Lam \footnotemark[1]}

\date{ }

\maketitle

\renewcommand{\thefootnote}{\fnsymbol{footnote}}
\footnotetext[1]{Department of Mathematics, The Chinese University of Hong Kong, Shatin, N.T., Hong Kong
({\tt kflam@math.cuhk.edu.hk}).}

\begin{abstract}
We study the sharp interface limit of a non-mass-conserving Cahn--Hilliard--Darcy system with the weak compactness method developed in Chen (J. Differential Geometry, 1996).  The source term present in the Cahn--Hilliard component is a product of the order parameter and a prescribed function with zero spatial mean, leading to non-conservation of mass.  Furthermore, the divergence of the velocity field is given by another prescribed function with zero spatial mean, which yields a coupling of the Cahn--Hilliard equation to a non-solenoidal Darcy flow.  New difficulties arise in the derivation of uniform estimates due to the presence of the source terms, which can be circumvented if we consider the above specific structures.  Moreover, due to the lack of mass-conservation, the analysis is valid as long as one phase does not vanish completely, leading to a local-in-time result.  The sharp interface limit for a Cahn--Hilliard--Brinkman system is also discussed.
\end{abstract}

\noindent \textbf{Key words.} Sharp interface limit, Cahn--Hilliard equation, non-solenoidal velocity, Darcy flow, Brinkman flow, varifold \\

\noindent \textbf{AMS subject classification. } 35B40, 35K57, 35Q35, 35Q92, 35R35, 76T99.

\section{Introduction}
The Cahn--Hilliard equation 
\begin{align}\label{Intro:CH}
\pd_{t} \varphi = \Laplace \mu, \quad \mu = - \eps \Laplace \varphi + \eps^{-1} \Psi'(\varphi) 
\end{align}
is a well-known model for spinodal decomposition and has seen many applications in models involving multiple phases of matter \cite{Kim}.  In \eqref{Intro:CH}, $\varphi$ is the order parameter which distinguishes the two phases separated by thin interfacial layers, $\Psi$ is a potential with two equal minima (e.g. at $\pm 1$) with derivative $\Psi'$, $\mu$ is the chemical potential for $\varphi$, and $\eps > 0$ is a small parameter related to the thickness of the interfacial layer.  A key property is the conservation of mass, that is, when \eqref{Intro:CH} is posed in a bounded domain $\Omega \subset \R^{d}$ and furnished with Neumann boundary conditions $\pdnu \varphi := \nabla \varphi \cdot \bm{\nu} = 0$ on $\pd \Omega$, it holds that
\begin{align*}
\int_{\Omega} \varphi(t) \dx = \int_{\Omega} \varphi(0) \dx \quad \forall t > 0,
\end{align*}
where $\varphi(0)$ is the initial condition for $\varphi$.  This property allows for the interpretation that the Cahn--Hilliard equation is the $H^{-1}$-gradient flow of the Ginzburg--Landau functional \cite{Cowan}
\begin{align}\label{GL}
\int_{\Omega} \frac{1}{\eps} \Psi(\varphi) + \frac{\eps}{2} \abs{\nabla \varphi}^{2} \dx.
\end{align}
It is possible to consider material effects for the phases of matter, for instance, if the phases are modelled as solids with different elastic properties.  Then, a model coupling the Cahn--Hilliard equation and a quasistatic linear elasticity system is used \cite{Garcke, GarckeKwak, LarcheCahn, Zhu}.  One may also model the phases as incompressible fluids \cite{Anderson}, leading to coupled systems of Navier--Stokes--Cahn--Hilliard type \cite{AGG, LT} or Cahn--Hilliard--Darcy type \cite{DGL, LLG1, LLG2}.  

Further applications of the Cahn--Hilliard equation are image inpainting \cite{Bertozzi} to fill in damaged or missing regions of an image using data from surrounding areas, and also in the mathematical modelling of tumour growth \cite{CL_book, Frieboes, GLSS, Wise}.  Our motivation stems from the latter application which utilizes a new class of continuum models to capture the basic behaviour of the tumour, such as its growth by consuming a chemical species that serves as a nutrient, and tumour elimination by apoptosis, by affixing source terms in the Cahn--Hilliard equation.  There is additional coupling of the Cahn--Hilliard equation with a reaction-diffusion equation for the chemical species, and/or Darcy-type equation for a velocity field.  Let us mention that in inpainting and tumour growth the Cahn--Hilliard equation is modified in such a way that the mass-conservation property is lost.

In this work we study the following Cahn--Hilliard--Darcy system 
\begin{equation}\label{Intro:CHD}
\begin{aligned}
 \div \bm{v} & = \HH , \\
\K \bm{v} & = -\nabla p + (\mu + \chi \sigma) \nabla \varphi, \\
\pd_{t}\varphi + \div (\varphi \bm{v}) & = \div (m(\varphi) \nabla \mu) + \U \varphi, \\
\mu & = \eps^{-1} \Psi'(\varphi) - \eps \Laplace \varphi - \chi \sigma, \\
\pd_{t}\sigma + \div (\sigma \bm{v}) & = \div (n(\varphi) \nabla (\sigma - \chi \varphi)) + \So.
\end{aligned}
\end{equation}
The above system is composed of a convective Cahn--Hilliard equation with source term $\U \varphi$ and variable mobility $m(\varphi)$ that is coupled to an equation for the variable $\sigma$ with source term $\So$ and variable mobility $n(\sigma)$ through the fluxes.  For positive values of $\chi$, the appearance of $\div (\chi m(\varphi) \nabla \sigma)$ in the equation for $\varphi$, after substituting the definition of $\mu$, and of $\div (\chi n(\varphi) \nabla \varphi)$ in the equation for $\sigma$ lead to a type of cross-diffusion coupling between $\varphi$ and $\sigma$, and can be used to model chemotaxis and active transport mechanisms, respectively \cite{GLNeu, GLSS}.  Both variables $\varphi$ and $\sigma$ are transported by a volume-averaged mixture velocity $\bm{v}$ \cite{AGG} which is governed by Darcy's law with pressure $p$ and inverse permeability $\K$.  The term $\HH$ accounts for  sources and sinks in the mass balance due to the gain or loss of volume from the term $\U \varphi$.  This system of equations arises for example when modelling tumour and healthy cells as inertia-less fluids \cite{ChenLowen, Frieboes, GLSS, Wise} and its variants have been previously studied analytically \cite{GLDarcy, JWZ, LTZ}.  Note that the velocity $\bm{v}$ is non-solenoidal if $\HH$ is non-zero, and in the presence of the source terms $\HH$, $\U \varphi$, and $\So$, the system \eqref{Intro:CHD} admits an energy identity of the form
\begin{equation}\label{Intro:Energy:ID}
\begin{aligned}
& \frac{\dd}{\dt} \int_{\Omega} \frac{1}{\eps} \Psi(\varphi) + \frac{\eps}{2} \abs{\nabla \varphi}^{2} + \frac{1}{2} \abs{\sigma}^{2} - \chi \sigma \varphi \dx \\
& \qquad + \int_{\Omega} m(\varphi) \abs{\nabla \mu}^{2} + n(\varphi) \abs{\nabla (\sigma - \chi \varphi)}^{2} + \K \abs{\bm{v}}^{2} \dx  \\
& \quad = \int_{\Omega} \U \varphi \mu + \So (\sigma - \chi \varphi) + \HH \left ( p - \varphi \mu - \tfrac{1}{2} \abs{\sigma}^{2} \right ) \dx.
\end{aligned}
\end{equation}
Note that the cross-diffusion mechanisms accounting for chemotaxis and active transport leads to a term $\chi \sigma \varphi$ without a definite sign under the time derivative.  Moreover, for non-solenoidal flow, the pressure $p$ appears explicitly in the energy identity.  Also of interest is the Cahn--Hilliard--Brinkman system \cite{BCG}, which modifies the Darcy law $\K \bm{v} = -\nabla p + (\mu + \chi \sigma) \nabla \varphi$ to the following Brinkman equation \cite{Brinkman} with capillary effects:
\begin{align*}
- \div (2 \eta \der \bm{v}) + \K \bm{v} = -\nabla p + (\mu + \chi \sigma) \nabla \varphi,
\end{align*}
where the constant $\eta > 0$ can be interpreted as the viscosity, and $\der \bm{v} := \frac{1}{2} (\nabla \bm{v} + (\nabla \bm{v})^{\top})$ is the rate of deformation tensor.  

The main goal of this work is to establish the behaviour of solutions in the singular limit (also known as the sharp interface limit) in which the thickness of the interfacial layer $\eps$ tends to zero.  Heuristically, in the limit $\eps \to 0$, the domain $\Omega$ is partitioned into two sets: $\Omega_{+}$ where $\varphi$ is close to $+1$ and $\Omega_{-}$ where $\varphi$ is close to $-1$.  These sets are separated by a moving hypersurface $\Gamma$ whose evolution is coupled to partial differential equations posed in $\Omega_{\pm}$, which results in a free boundary problem.  The formal identification of sharp interface limits for phase field models in great generality can be obtained rather easily with the method of matched asymptotic expansions \cite{Fife, Pego}, but the rigorous passage $(\eps \to 0)$ have only been showed for a handful of models.  For the Cahn--Hilliard equation and its variants we mention three methodologies for the rigorous passage.

The first method constructs approximate solutions to the phase field model via asymptotic expansions.  The leading zeroth order term of the expansion is built from smooth solutions of the limiting free boundary problem, while the subsequent terms are obtained from rigorously solving the systems of equations arising from matching expansions order-by-order.  The error between the approximate solutions and the true solution to the phase field model is controlled by spectral estimates of the Cahn--Hilliard operator \cite{ChenSpectral}, and the number of terms in the expansion depends on a desired decay rate for the error.  This method has been applied to the Cahn--Hilliard equation \cite{ABC}, the Cahn--Larch\'{e} system \cite{AbelsSchaubeck1}, the convective Cahn--Hilliard equation \cite{AbelsSchaubeck2}, and a diffuse interface model for tumour growth \cite{Fei2}.  Let us also mention a related method performed with Hilbert expansions  \cite{CCO} and the convergence of fully discrete numerical solutions of the Cahn--Hilliard equation \cite{FengCH}.  While this method of rigorous asymptotic expansions can yield a convergence rate in $\eps$, it is a \emph{local-in-time} result, as it is expected that the free boundary problem admits only local-in-time smooth solutions.  Furthermore, the construction of approximate solutions can become cumbersome rather quickly and good decay estimates are needed for the error terms.

The second method utilises the well-known results of Modica--Mortola \cite{Modica} on the Gamma-convergence of the Ginzburg--Landau functional \eqref{GL} to a constant multiple of the perimeter functional, and the work of Sandier and Serfaty \cite{SS} on the Gamma-convergence of gradient flows.  Le \cite{Le} took advantage of the fact that the Cahn--Hilliard equation is the $H^{-1}$-gradient flow of the Ginzburg--Landau functional \eqref{GL} in order to pass to the limit $\eps \to 0$.  However, this method is only limited to \emph{gradient flows}, and thus source terms in the Cahn--Hilliard system have to be formulated in such a way that the system can be expressed as a gradient flow of some functional \cite{RoccaScala}.  In addition, this method requires a priori smoothness on the limit hypersurface $\Gamma$ in the limiting free boundary problem ($C^{3}$-regularity) and an $H^{1}$-variant of a De Giorgi conjecture to hold. 

The third method seeks uniform bounds in $\eps$ for the variables in appropriate function spaces and passing to the limit $\eps \to 0$ with weak compactness methods.  One obtains as a consequence the existence of a weak solution to the free boundary problem, where the hypersurface $\Gamma$ is represented as a varifold, and the mean curvature is formulated as the first variation of the varifold.  The convergence holds global-in-time, but unlike the first method, no convergence rate in $\eps$ can be deduced.  This method has been used for the Cahn--Hilliard equation in the seminal work of Chen \cite{Chen96} (cf. Stoth \cite{Stoth} for the radial setting), and have seen successful applications to the Navier--Stokes--Cahn--Hilliard system \cite{AbelsLengeler, AbelsRoger}, the Cahn--Larch\'{e} system \cite{GarckeKwak}, the Hele--Shaw--Cahn--Hilliard system \cite{Fei,RM}, and a model for lipid raft formation with a surface Cahn--Hilliard equation \cite{AK}.  It is worth pointing out that a uniform bound on the mean value of the chemical potential $\mu$ can be obtained if the mean of $\varphi$ stayed uniformly away from $-1$ and $1$.  One way to guarantee this is to enforce that the mean of $\varphi(0)$ lies in $(-1,1)$ and use the mass-conservation property of the Cahn--Hilliard equation.  Thus, the references mentioned above for this weak compactness approach do not deal with a source term in the Cahn--Hilliard equation, which would automatically destroy the mass-conservation property.  The main purpose of this work is to provide a result in this direction.

Let us illustrate the main obstacle in deriving uniform estimates even for the Cahn--Hilliard equation with a bounded source term $\Z$, that is, 
\begin{align*}
\pd_{t} \varphi = \Laplace \mu + \Z, \quad \e{\mu} = - \eps \Laplace \varphi + \eps^{-1} \Psi'(\varphi),
\end{align*} 
whose solutions satisfy the energy identity
\begin{align}\label{CH:source}
\frac{d}{dt} \int_{\Omega} \frac{1}{\eps} \Psi(\varphi) + \frac{\eps}{2} \abs{\nabla \varphi}^{2} \dx + \int_{\Omega} \abs{\nabla \mu}^{2} \dx = \int_{\Omega} \Z \mu \dx.
\end{align}
A key observation to control the right-hand side is to employ a splitting and the Poincar\'{e} inequality, leading to
\begin{align*}
\int_{\Omega} \Z \mu \dx \ds & = \int_{\Omega} \Z (\mu - \mean{\mu}) \dx + \mean{\mu} \int_{\Omega} \Z \dx \leq C \norm{\nabla \mu}_{L^{2}(\Omega)} + C \abs{\mean{\mu}},
\end{align*}
where $\mean{\mu} := \frac{1}{\abs{\Omega}} \int_{\Omega} \mu \dx$ is the spatial mean of $\mu$.  Hence, uniform estimates can be obtained from \eqref{CH:source} if the spatial mean $\mean{\mu}$ can be bounded uniformly in $\eps$, or bounded by the terms on the left-hand side of \eqref{CH:source} so that a Gronwall argument can be applied.  The method of controlling $\mean{\mu}$ established in the seminal work of Chen \cite{Chen96} relies on $\int_{\Omega} \eps^{-1} \Psi(\varphi) \dx$ is uniformly bounded beforehand.  This is the main difference between previous works \cite{AbelsLengeler, AbelsRoger, Chen96, Fei, GarckeKwak, RM} and the current work, where we need to derive an estimate on the mean value $\mean{\mu}$  absent of any uniform estimates.  We overcome this technical difficult by considering source term $\Z$ of the form $\U \varphi$ for a prescribed function $\U$ with zero spatial mean, which allows us to adapt the proof of \cite[Lem.~3.4]{Chen96} to derive the first uniform estimate from the energy identity \eqref{Intro:Energy:ID}, and subsequent uniform estimates then follow.  On the other hand, observe that under natural boundary conditions we do not necessarily have mass conservation for $\varphi$:
\begin{align}\label{mean:varphi}
\frac{1}{\abs{\Omega}} \int_{\Omega} \varphi(t) \dx - \frac{1}{\abs{\Omega}} \int_{\Omega} \varphi_{0} \dx = \int_{0}^{t} \frac{1}{\abs{\Omega}} \int_{\Omega} \U \varphi \dx \ds,
\end{align}
as the right-hand side of \eqref{mean:varphi} need not be zero.  To employ the weak compactness machinery developed by Chen \cite{Chen96} and analyse the sharp interface limit of \eqref{Intro:CHD}, we have to estimate the right-hand side of \eqref{mean:varphi}, and thus place an upper bound on the terminal time $T$ for which the mean of $\varphi(t)$ stays away from $-1$ and $1$ for all $t \in [0,T]$.  Physically, this means that in the time interval $[0,T]$ under consideration, neither phases of matter can vanish completely.   An upper bound on the terminal time means the nature of our sharp interface analysis is \emph{local-in-time}, as oppose to previous global-in-time results in the literature.  

An interesting feature of the term $\div (n(\varphi) \chi \nabla \varphi)$ in equation for $\sigma$, as revealed in formally matched asymptotic analysis and supported by numerical simulations \cite{GLSS}, is that for $\chi > 0$, $\sigma$ will experience a jump across the hypersurface $\Gamma$.  More precisely,
\begin{align}\label{Intro:jump}
\jump{\sigma} = 2 \chi \text{ on } \Gamma,
\end{align}
where $\jump{f}(x) := \lim_{\delta \to 0} (f(x + \delta \nu(x)) - f(x - \delta \nu(x)) )$ denotes the jump of the quantity $f$ across $\Gamma$.  Here, $x \in \Gamma$ and $\nu$ is the normal vector field on $\Gamma$ pointing into $\Omega_{+}$, with points $x + \delta \nu(x) \in \Omega_{+}$ and $x - \delta \nu(x) \in \Omega_{-}$.  For physically relevant situations, $\sigma$ represents the density of a chemical in the system, and should always be non-negative.  In the case $\chi = 0$, such a qualitative behaviour can be proved with the help of a weak comparison principle, and boundedness may even be shown if the initial condition is bounded.  But it turns out that the same arguments cannot be applied when $\chi > 0$, and establishing an $L^{\infty}(0,T;L^{\infty}(\Omega))$ estimate for $\sigma$ by the Moser--Alikakos iteration \cite{Alikakos} seems not possible due to the cross-diffusion type term $\div (n(\varphi) \chi \nabla \varphi)$.  Even if $\varphi \in L^{2}(0,T;H^{3}(\Omega))$ - which is typical for weak solutions to Cahn--Hilliard systems, preliminary calculations suggest at best one obtains $\sigma \in L^{\infty}(0,T;L^{q}(\Omega))$ for all $q \in (2,\infty)$ with the estimates dependent on the value of $q$, and thus at present no qualitative statements can be made about $\sigma$ for positive values of $\chi$.  In fact, we have observed in numerical simulations that negative values of $\sigma$ can appear if $\chi$ is chosen inappropriately.

Hence, a secondary goal of this paper is to ascertain that the jump condition \eqref{Intro:jump} is present in the limit $\eps \to 0$.  Then, in the sharp interface limit, $\sigma$ satisfies second order equations in the bulk regions $\Omega_{-}$ and $\Omega_{+}$, for which certain comparison principles may be available, and the jump condition \eqref{Intro:jump} can give an indication of the appropriate range of values for $\chi$ to choose to achieve physically relevant numerical simulations.

The outline of the paper is as follows: In Sec.~\ref{sec:Prelim} we introduce the notation and several preliminary results that will be useful in studying the sharp interface limit.  The main result is stated in Sec.~\ref{sec:Main}.  We derive uniform estimates in Sec.~\ref{sec:Est}, compactness statements in Sec.~\ref{sec:Comp}, and then pass to the limit in Sec.~\ref{sec:pass}.  In Sec.~\ref{sec:diss} we discuss analogous sharp interface limits for the zero-velocity variant and the solenoidal Brinkman variant.

\section{Useful preliminaries}\label{sec:Prelim}

\subsection{Notation}

The identity matrix will be denoted by $\id$.  For any two vectors $\bm{a}, \bm{b} \in \R^{d}$, the tensor product $\bm{a} \otimes \bm{b} \in \R^{d \times d}$ is defined as $(\bm{a} \otimes \bm{b})_{ij} = a_{i} b_{j}$ for $1 \leq i, j \leq d$.  The Jacobian matrix of a differentiable vector-valued function $\bm{Y} : \Omega \to \R^{d}$ is denoted as $\nabla \bm{Y} \in \R^{d \times d}$, and the Hessian of a twice-differentiable scalar function $\psi: \Omega \to \R$ is denoted as $D^{2} \psi$.  For the sharp interface limits, we obtain a decomposition of the domain $\Omega$ into two disjoint time-dependent subsets $\Omega_{+}$ and $\Omega_{-}$ separated by a time-dependent interface $\Gamma$.  The jump of a function $f$ across $\Gamma$ is defined as
\begin{align*}
\jump{f}(x) := \lim_{\delta \to 0} \left ( f(x + \delta \nu(x)) - f(x - \delta \nu(x)) \right ),
\end{align*}
for $x \in \Gamma$, with a normal vector $\nu$ of $\Gamma$ pointing to $\Omega_{+}$ such that $x + \delta \nu(x) \in \Omega_{+}$ and $x - \delta \nu(x) \in \Omega_{-}$.  Furthermore, we will denote the normal velocity of the interface $\Gamma$ by $\velo$, and the characteristic function of any measurable set $A$ by $\chara_{A}$.


\paragraph{Function spaces.}
We use the notation $L^{p} := L^{p}(\Omega)$ and $W^{k,p} := W^{k,p}(\Omega)$ for any $p \in [1,\infty]$, $k > 0$ to denote the standard Lebesgue spaces and Sobolev spaces equipped with the norms $\norm{\cdot}_{L^{p}}$ and $\norm{\cdot}_{W^{k,p}}$.  In the case $p = 2$ we use notation $H^k(\Omega) = W^{k,2}(\Omega)$ and $\norm{\cdot}_{H^{k}} := \norm{\cdot}_{W^{k,2}}$.  The space-time cylinder $\Omega \times (0,T)$ is denoted as $Q$, and for any $t \in (0,T)$, we denote $\Omega \times (0,t)$ as $Q_{t}$.  Similarly, we denote $\pd \Omega \times (0,T)$ by $\Sigma$.  For any Banach space $X$, its dual is denoted as $X'$, and for any $p \in [1,\infty]$, the norm of the Bochner space $L^{p}(0,T;X)$ will sometimes be denoted as $\norm{\cdot}_{L^{p}(X)}$.  We will often use the isometric isomorphism $L^{p}(Q_{t}) \cong L^{p}(0,t;L^{p})$ for $t \in (0,T]$ and any $p \in [1,\infty)$.  Furthermore, we use the notation $L^{p}(Q) := L^{p}(Q_{T})$ and $(L^{p}(Q))^{d}$ denotes the space of vector-valued functions $\bm{f} :Q \to \R^{d}$ where each component belongs to $L^{p}(Q)$.  The space $L^{p}(Q)^{d \times d}$ is defined analogously for tensor-valued functions.

The mean of a function $f$ will be denoted as $\mean{f} := \frac{1}{\abs{\Omega}} \int_{\Omega} f \dx$.  We introduce the space $L^{p}_{0} := \{ f \in L^{p}(\Omega) : \int_{\Omega} f \dx = 0 \}$, $p \in [1,\infty]$, as the space of $L^{p}(\Omega)$-functions with zero mean.  Moreover, we define $H^2_n := \{ f \in H^{2}(\Omega) : \pdnu f = 0 \text{ on } \pd \Omega \}$, where $\pdnu f := \nabla f \cdot \bm{\nu}$ is the normal derivative of $f$ on $\pd \Omega$.

For $k \in \N \cup \{\infty\}$, we denote by $C^{k}_{c}(\Omega)$ the space of $C^{k}$-functions with compact support in $\Omega$, and by $C^{k}_{0}(\Omega)$ the space of $C^{k}$-functions that have zero trace on $\pd \Omega$.  For a Banach space $B$ and $\alpha \in (0,1)$, we denote the space
$C^{0,\alpha}([0,T];B)$ as the set of functions $f \in C^{0}([0,T];B)$ such that $\norm{f}_{C^{0,\alpha}([0,T];B)} < \infty$ where
\begin{align*}
\norm{f}_{C^{0,\alpha}([0,T];B)} & := \sup_{t \in [0,T]} \norm{f(t)}_{B} + \sup_{0 \leq \tau < t \leq T} \left ( [f]_{\alpha, B}(\tau, t) \right ), \\
[f]_{\alpha,B}(\tau,t) & := \frac{\norm{f(t) - f(\tau)}_{B}}{\abs{t-\tau}^{\alpha}}.
\end{align*}


\paragraph{Weakly continuous functions.}
For a Banach space $X$ with dual space $X'$, we denote the space $C^{0}_{\mathrm{w}}([0,T];X)$ as the set of functions $f: [0,T] \to X$ such that $t \mapsto \norm{f(t)}_{X}$ is bounded and the mapping $t \mapsto \inner{\phi}{f(t)}_{X}$ is continuous on $[0,T]$ for all $\phi \in X'$, where $\inner{\cdot}{\cdot}_{X}$ denotes the duality pairing between $X$ and its dual.  Furthermore, we say that $f_{n} \to f$ strongly in $C^{0}_{\mathrm{w}}([0,T];X)$ if and only if
\begin{align*}
\inner{\phi}{f_{n}(t)}_{X} \to \inner{\phi}{f(t)}_{X} \text{ in } C^{0}([0,T]) \quad \forall \phi \in X'.
\end{align*}


\paragraph{Measures and total variation.}
For a locally compact, separable metric space $Z$ with Borel $\sigma$-algebra $\mathcal{B}(Z)$, let $C^{0}_{c}(Z; \R^{d})$ denote the closure of compactly supported continuous functions $f : Z \to \R^{d}$ with respect to the supremum norm.  The space of $\R^{d}$-valued finite Radon measures, denoted by $\M(Z; \R^{d})$ can be seen as the dual space of $C^{0}_{c}(Z; \R^{d})$ \cite[Thm. 1.54]{Ambrosio}, that is, $\lambda \in \M(Z; \R^{d}) = (C^{0}_{c}(Z; \R^{d}))'$ if and only if $\lambda[\bm{v}_{1} + \bm{v}_{2}] = \lambda[\bm{v}_{1}] + \lambda[\bm{v}_{2}]$ for all $\bm{v}_{1}, \bm{v}_{2} \in C^{0}_{c}(Z; \R^{d})$ and
\begin{align*}
\norm{\lambda} := \sup \{ \lambda[\bm{v}] : \bm{v} \in C^{0}_{c}(Z; \R^{d}), \norm{\bm{v}}_{L^{\infty}(Z)} \leq 1 \} < \infty.  
\end{align*}
We use the notation $\M(Z) := \M(Z; \R)$.  For $\lambda \in \M(Z; \R^{d})$, its total variation, denoted by $\abs{\lambda}$, is given as
\begin{align*}
\abs{\lambda}(B) = \sup \left \{ \sum_{j=1}^{\infty} \abs{\lambda(B_{j})} : B_{j} \in \mathcal{B}(Z) \text{ pairwise disjoint, } B = \bigcup_{j=1}^{\infty} B_{j} \right \}
\end{align*}
for every $B \in \mathcal{B}(Z)$, where $\abs{\lambda(B_{j})}$ denotes the measure of $B_{j}$ with respect to $\lambda$.  For an open set $\Omega \subset \R^{d}$, the set $BV(\Omega) : = \{ f \in L^{1}(\Omega) : \nabla f \in \M(\Omega; \R^{d}) \}$ denotes the space of functions of bounded variations.  The total variation of $f \in BV(\Omega)$ is defined as
\begin{align*}
\norm{\nabla u}_{\M^{d}} := \abs{\nabla u}(\Omega) := \sup \left \{ \int_{\Omega} u \div \bm{Y} \dx \, : \, \bm{Y} \in C^{1}_{c}(\Omega; \R^{d}), \norm{\bm{Y}}_{L^{\infty}} \leq 1 \right \},
\end{align*} 
and we equipped $BV(\Omega)$ with the norm $\norm{f}_{BV} := \norm{f}_{L^{1}} + \norm{\nabla f}_{\M^{d}}$.  Moreover, the space $BV(\Omega, \{0,1\})$ denotes the set of all $f \in BV(\Omega)$ such that $f(x) \in \{0,1\}$ for a.e.~$x \in \Omega$.  It is well-known that $W^{1,1}(\Omega)$ embeds continuously into $BV(\Omega)$ with $\norm{\nabla u}_{L^{1}} = \norm{\nabla u}_{\M^{d}} = \abs{\nabla u}(\Omega)$.


\paragraph{Varifolds.}
A general $(d-1)$-varifold $V$ on $\Omega \subset \R^{d}$ is a Radon measure on the Grassmannian manifold $G(\overline{\Omega}) := \overline{\Omega} \times \PP^{d-1}$, where $\PP^{d-1}$ is the set of $(d-1)$-dimensional subspaces of $\R^{d}$ and can be identified as with $\mathbb{S}^{d-1} / \{ \nu, - \nu\}$, i.e., the set of all normals modulo orientation.  This allows us to associate the $(d-1)$-dimensional subspace $P \in \PP^{d-1}$ with the normal vector in $\mathbb{S}^{d-1} / \{ \nu, - \nu \}$ perpendicular to $P$.  Henceforth, we will reuse the symbol $P$ to denote an element in $\mathbb{S}^{d-1}/ \{\nu, -\nu\}$.  The mass measure $\norm{V}$ of the varifold $V$ is a Radon measure on $\Omega$ defined by 
\begin{align*}
\norm{V}(A) = \int_{A \times \PP^{d-1}} \dd V(x, P) \text{ for } A \subset \Omega.
\end{align*}
Furthermore, its first variation, denoted by $\delta V$, is a linear functional on $C^{1}_{0}(\Omega; \R^{d})$ defined as 
\begin{align}\label{1stVar}
\inner{\delta V}{\bm{Y}} := \int_{\Omega \times \PP^{d-1}} \nabla \bm{Y} : \left ( \id - P \otimes P \right ) \dd V(x, P) \quad \forall \bm{Y} \in C^{1}_{0}(\Omega ; \R^{d}).
\end{align}
A varifold $V$ is said to have a generalised mean curvature $\bm{H}$ if $\bm{H}$ is a $\norm{V}$-measurable vector-valued function satisfying
\begin{align*}
\inner{\delta V}{\bm{Y}} = - \int_{\Omega} \bm{Y} \cdot \bm{H} \dd \norm{V}(x) \quad \forall \bm{Y} \in C^{1}_{0}(\Omega; \R^{d}).
\end{align*}

\subsection{Assumptions and auxiliary results}

Throughout this paper, the following assumptions hold unless stated otherwise.

\begin{enumerate}[label=$(\mathrm{A \arabic*})$, ref = $\mathrm{A \arabic*}$]
\item $\Omega \subset \R^{d}$, $d = 1,2,3$, is a bounded domain with smooth boundary $\pd \Omega$.  The constants $\chi$ and $\K$ are positive.

\item \label{ass:mn} The functions $m, n \in C^{0}(\R)$ satisfy
\begin{align*}
m_{0} \leq m(s) \leq m_{1}, \quad n_{0} \leq n(s) \leq n_{1} \quad \forall s \in \R 
\end{align*} 
with positive constants $m_{0}$, $m_{1}$, $n_{0}$ and $n_{1}$.
\item \label{ass:Psi} $\Psi \in C^{3}(\R)$ is a non-negative function with minima at $\pm 1$ and satisfies
\begin{align}\label{Psi:basic}
\int_{-1}^{1} \sqrt{2 \Psi(s)} \ds = 1, \quad \Psi'(\pm 1) = 0, \quad \Psi(y) \leq 1 + \abs{y}^{2}  \quad \forall y \in [-1,1], 
\end{align} 
and
\begin{align}\label{Psi''lowergrowth}
\exists c_{0} > 0  \text{ s.t. } \Psi''(y) \geq c_{0} \abs{y}^{q-2} \quad \forall \abs{y} > 1 - c_{0}, 
\end{align}
for some exponent $q \geq 4$.  As a consequence, there exist positive constants $k_{0}$, $k_{1}$ such that
\begin{align}
\label{Psi:lowerbound}
\Psi(y) \geq k_{0} \abs{y}^{q} - k_{1} &\quad \forall \abs{y} > 1-c_{0}.
\end{align}
\item \label{ass:initial} For all $\eps \in (0,1]$, $\e{\varphi}_{0} \in H^{1}(\Omega), \e{\sigma}_{0} \in L^{2}(\Omega)$, and there exists a positive constant $\EE_{0}$ and a constant $u_{0} \in (-1,1)$, independent of $\eps$, such that
\begin{align*}
\mean{\e{\varphi}_{0}} = u_{0}, \quad \EE(\e{\varphi}_{0}, \e{\sigma}_{0}) := \int_{\Omega} \frac{1}{\eps} \Psi(\e{\varphi}_{0}) + \frac{\eps}{2} \abs{\nabla \e{\varphi}_{0}}^{2} + \frac{1}{2} \abs{\e{\sigma}_{0}}^{2} - \chi \e{\sigma}_{0} \e{\varphi}_{0} \dx \leq \EE_{0}.
\end{align*}
Furthermore, there exist $\Omega_{+}(0) \subset \subset \Omega$ with $\chara_{\Omega_{+}(0)} \in BV(\Omega, \{0,1\})$ and $\sigma_{0} \in L^{2}(\Omega)$ such that $\e{\sigma}_{0} \to \sigma_{0}$ weakly in $L^{2}(\Omega)$ and $\e{\varphi}_{0} \to -1 + 2 \chara_{\Omega_{+}(0)}$ strongly in $L^{2}(\Omega)$.
\item \label{ass:source} $\U(x) \in C^{0,1}(\overline{\Omega}) \cap L^{\infty}_{0}(\Omega)$ is given, and $T > 0$ is a fixed constant satisfying
\begin{align}\label{cond:T}
\abs{u_{0}} + T \norm{\U}_{L^{\infty}(\Omega)} < 1,
\end{align}
where $u_{0} \in (-1,1)$ is the mean of the initial condition $\e{\varphi}_{0}$.  Furthermore, $\So(x,t)$ and $\HH(x,t)$ are prescribed functions satisfying $\So \in L^{2}(0,T;L^{2}(\Omega))$, $\HH \in L^{\infty}(0,T;L^{\infty}_{0}(\Omega))$. 
\end{enumerate}

The value $\int_{-1}^{1} \sqrt{2 \Psi(s)} \ds$ can be seen as the surface tension which we rescale to $1$, and the other assumptions in \eqref{Psi:basic}, along with \eqref{Psi''lowergrowth}, will be used to show the assertions in Lemmas~\ref{lem:Psigeqabsvarphi-1square} and \ref{lem:ModicaAnsatz:BijW} below.  \eqref{ass:mn} implies the mobilities are non-degenerate, while \eqref{ass:initial}-\eqref{ass:source} allow us to deduce the first uniform estimate, which other uniform estimates then follow from.  The condition \eqref{cond:T} places an upper bound on the time interval $[0,T]$ for which the sharp interface analysis is valid.  In particular, under \eqref{cond:T} we show that the mean satisfies $\mean{\e{\varphi}(t)} \in (-1,1)$ for all $t \in [0,T]$.  Furthermore, compared to the Cahn--Hilliard case \cite{Chen96} the stronger assumption $q \geq 4$ in \eqref{Psi''lowergrowth} is need to derive uniform estimates for the analysis of the Cahn--Hilliard--Darcy system.

We now present some preliminary results that will be crucial in proving our main results.  While they have been used extensively in the literature \cite{AbelsLengeler, AbelsRoger, Chen96, Fei, GarckeKwak, RM}, we are unable to find a reference (aside from similar results in Daube \cite{Daube}).  Due to their relative importance in the study of the sharp interface limit, we sketch the proofs for the benefit of the reader.

\begin{lem}\label{lem:Psigeqabsvarphi-1square}
There exists a positive constant $C_{0}$ such that
\begin{align}\label{quadraticbound}
f(y) := (\abs{y}-1)^{2} \leq C_{0} \Psi(y) \quad \forall y \in \R.
\end{align}
\end{lem}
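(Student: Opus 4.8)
The plan is to establish the inequality $(\abs{y}-1)^2 \leq C_0 \Psi(y)$ by exploiting the behaviour of $\Psi$ separately on a bounded region near its minima and on the unbounded region where the growth condition \eqref{Psi''lowergrowth} takes effect. The function $f(y) = (\abs{y}-1)^2$ vanishes precisely at $y = \pm 1$, which are exactly the (non-degenerate, by the $C^3$ hypothesis and the growth bound) minima of $\Psi$, so the content of the lemma is that $\Psi$ does not decay faster than quadratically as one approaches $\pm 1$, and grows at least quadratically at infinity.

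\medskip

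\noindent First I would treat the far field. For $\abs{y} > 1 - c_0$, the lower bound \eqref{Psi:lowerbound} gives $\Psi(y) \geq k_0 \abs{y}^q - k_1$ with $q \geq 4$, while $f(y) = (\abs{y}-1)^2$ grows only quadratically; hence for all sufficiently large $\abs{y}$, say $\abs{y} \geq R$ for some $R > 1$, one has $f(y) \leq \Psi(y)$ outright (choosing $R$ so that $k_0\abs{y}^q - k_1 \geq (\abs{y}-1)^2$ there). This disposes of the behaviour at infinity cleanly.

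\medskip

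\noindent The remaining work is on the compact set $\{\abs{y} \leq R\}$. The key point is to control the ratio $f(y)/\Psi(y)$ near the two zeros $y = \pm 1$, since elsewhere on the compact set $\Psi$ is strictly positive and continuous so the ratio is bounded by a routine compactness argument. Near $y = 1$, I would Taylor-expand both functions: $\Psi$ is $C^3$ with $\Psi(1) = \Psi'(1) = 0$, so $\Psi(y) = \tfrac{1}{2}\Psi''(1)(y-1)^2 + O(\abs{y-1}^3)$, and the growth assumption \eqref{Psi''lowergrowth} evaluated in the limit forces $\Psi''(1) \geq c_0 > 0$ (so the minimum is genuinely non-degenerate). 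Meanwhile $f(y) = (y-1)^2$ for $y$ near $1$. Thus $f(y)/\Psi(y) \to 2/\Psi''(1) < \infty$ as $y \to 1$, and the ratio is bounded in a neighbourhood of $1$; the symmetric argument handles $y = -1$ (using $f(y) = (-y-1)^2 = (y+1)^2$ there).

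\medskip

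\noindent Finally I would patch the estimates together: take the maximum of the finitely many constants arising from the two neighbourhoods of $\pm 1$, the intermediate compact region where $\Psi$ is bounded below by a positive constant, and the far-field constant $1$, to produce a single $C_0$ valid for all $y \in \R$. \textbf{The main obstacle} is the non-degeneracy at the minima, i.e. verifying $\Psi''(\pm 1) > 0$ so that the quadratic comparison does not break down; this is where assumption \eqref{Psi''lowergrowth} is essential, as without a lower bound on the second derivative $\Psi$ could in principle be flatter than quadratic at $\pm 1$ and the inequality would fail. The rest is a standard continuity-and-compactness packaging argument.
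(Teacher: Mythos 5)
Your proof is correct, but it follows a genuinely different route from the paper's. The paper splits $\R$ into only two regions at $\abs{y} = 1 - c_{0}$: on the outer region it picks $C_{1}$ with $C_{1} c_{0}(1-c_{0})^{q-2} \geq 2$, so that the pointwise comparison $C_{1}\Psi''(y) \geq 2 = f''(y)$ holds for all $\abs{y} \geq 1 - c_{0}$, and then integrates twice from the common zeros $\pm 1$ (using $\Psi(\pm 1) = \Psi'(\pm 1) = f(\pm 1) = f'(\pm 1) = 0$) to get $C_{1}\Psi \geq f$ there — this single step handles both the non-degeneracy at the minima and the growth at infinity; on the inner region it simply uses $f \leq 1 \leq C_{2}\Psi$ with $C_{2} = 1/\min_{\abs{y}\leq 1-c_{0}}\Psi$. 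You instead use a three-region decomposition: the far field via the already-derived polynomial bound \eqref{Psi:lowerbound}, punctured neighbourhoods of $\pm 1$ via Taylor expansion together with $\Psi''(\pm 1) \geq c_{0}$ (which indeed follows directly from \eqref{Psi''lowergrowth} since $\abs{\pm 1} = 1 > 1 - c_{0}$, no limiting argument needed), and the remaining compact set by positivity and continuity of $\Psi$. Both arguments are valid under the stated hypotheses. The paper's version is more economical and extracts everything from the single second-derivative bound without invoking Taylor's theorem or a compactness step; yours is more modular, only uses the value of $\Psi''$ at the minima themselves rather than a uniform lower bound on $\Psi''$ throughout $\abs{y} \geq 1 - c_{0}$, and makes transparent exactly where non-degeneracy of the minima enters — at the cost of an extra region to patch and the (easily verified) observation that the Taylor expansion keeps $\Psi$ strictly positive on the punctured neighbourhoods so the ratio $f/\Psi$ is well defined there.
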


\begin{proof}
For $\abs{y} \geq 1 - c_{0}$, let $C_{1}$ be a constant such that $C_{1} c_{0}(1 - c_{0})^{q-2} \geq 2$. where $q \geq 4$ is the exponent in \eqref{Psi''lowergrowth}.  Then, $C_{1}\Psi''(y) \geq 2 = f''(y)$ for all $\abs{y} \geq 1 - c_{0}$.  Using that $\Psi(\pm 1) = f(\pm 1 ) = \Psi'(\pm 1) = f'(\pm 1 ) = 0$, we obtain from the fundamental theorem of calculus,
\begin{align*}
C_{1}\Psi'(y) \geq f'(y), \quad C_{1} \Psi(y) \geq f(y) \quad \forall \abs{y} \geq 1 - c_{0}.
\end{align*}
For $\abs{y} \leq 1 - c_{0}$, let $C_{2}$ be a constant such that $C_{2} = 1 / (\min_{\abs{y} \leq 1 - c_{0}} \Psi(y))$.  By assumption $\Psi(y) > 0$ for $\abs{y} \leq 1 - c_{0}$, and so $C_{2}$ is finite.  Then, as $ f(y) \leq 1$ for $\abs{y} \leq 1 - c_{0}$, we have that
\begin{align*}
C_{2} \Psi(y) \geq 1 \geq f(y) \quad \forall \abs{y} \leq 1 - c_{0}.
\end{align*}
The desired constant $C_{0}$ can be taken as $C_{0} = \max (C_{1}, C_{2})$.
\end{proof}

\begin{lem}\label{lem:ModicaAnsatz:BijW}
The function
\begin{align*}
W(y) := \int_{-1}^{y}  \sqrt{2\tilde{\Psi}(s)} \ds \text{ where } \tilde{\Psi}(s) := \min \left ( \Psi(s), 1 + \abs{s}^{2} \right )
\end{align*}
is bijective and there exists a positive constant $C_{1}$ such that for all $y_{1}, y_{2} \in \R$,
\begin{align}\label{Wfunctiondiff}
C_{1} \abs{y_{1} - y_{2}}^{2} \leq \abs{W(y_{1}) - W(y_{2})} \leq \sqrt{2} \abs{y_{1} - y_{2}}(1 + \abs{y_{1}} + \abs{y_{2}}).
\end{align}
\end{lem}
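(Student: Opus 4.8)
The plan is to exploit that $W \in C^{1}(\R)$ with $W'(y) = \sqrt{2\tilde{\Psi}(y)} \geq 0$, where $\tilde{\Psi}(s) = \min(\Psi(s), 1 + \abs{s}^{2})$ is continuous, nonnegative, and vanishes precisely at $s = \pm 1$ (since $1 + \abs{s}^{2} > 0$ forces $\tilde{\Psi}(s) = 0$ exactly where $\Psi(s) = 0$, and by \eqref{Psi''lowergrowth} together with $\Psi(\pm 1) = \Psi'(\pm 1) = 0$ the latter occurs only at $\pm 1$). First I would settle bijectivity. As $W' \geq 0$ with $W' > 0$ off the null set $\{-1,1\}$, for $y_{1} < y_{2}$ we get $W(y_{2}) - W(y_{1}) = \int_{y_{1}}^{y_{2}} W' \ds > 0$, so $W$ is strictly increasing, hence injective. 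For surjectivity, by the super-quadratic growth \eqref{Psi:lowerbound} (using $q \geq 4$) we have $\Psi(s) > 1 + \abs{s}^{2}$ for $\abs{s}$ large, so $\tilde{\Psi}(s) = 1 + \abs{s}^{2}$ there and $W'(s) = \sqrt{2(1 + \abs{s}^{2})} \geq \sqrt{2}\abs{s} \to \infty$; integrating shows $W(y) \to \pm\infty$ as $y \to \pm\infty$, and continuity plus strict monotonicity give $W(\R) = \R$.

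For the upper estimate in \eqref{Wfunctiondiff} I would use only the definition of the minimum: $\tilde{\Psi}(s) \leq 1 + \abs{s}^{2} \leq (1 + \abs{s})^{2}$, whence $\sqrt{2\tilde{\Psi}(s)} \leq \sqrt{2}(1 + \abs{s})$. For $s$ between $y_{1}$ and $y_{2}$ one has $\abs{s} \leq \abs{y_{1}} + \abs{y_{2}}$, so integrating the constant bound $\sqrt{2}(1 + \abs{y_{1}} + \abs{y_{2}})$ over the interval (WLOG $y_{1} < y_{2}$) yields the right-hand inequality of \eqref{Wfunctiondiff} directly.

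The lower bound is the crux. The first step is a pointwise estimate $\tilde{\Psi}(s) \geq c_{*}(\abs{s} - 1)^{2}$ with $c_{*} = \min(1/C_{0}, 1) > 0$: where $\tilde{\Psi} = \Psi$ this is a rearrangement of \eqref{quadraticbound} in Lemma~\ref{lem:Psigeqabsvarphi-1square}, and where $\tilde{\Psi} = 1 + \abs{s}^{2}$ it follows from the elementary identity $1 + \abs{s}^{2} - (\abs{s}-1)^{2} = 2\abs{s} \geq 0$. Consequently $\abs{W(y_{2}) - W(y_{1})} \geq \sqrt{2 c_{*}} \int_{y_{1}}^{y_{2}} \abs{\abs{s} - 1} \ds$, and it remains to prove the reference inequality $\int_{a}^{b} \abs{\abs{s} - 1} \ds \geq c\,(b-a)^{2}$ for all $a < b$ and some $c > 0$.

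I would prove this last inequality by a superlevel-set argument that avoids all casework on the position of $[a,b]$ relative to the wells. Writing $h(s) := \abs{\abs{s} - 1} = \mathrm{dist}(s, \{-1,1\})$, the sublevel set $\{ h < \delta \}$ is contained in two intervals of length $2\delta$, so $\abs{\{ s \in [a,b] : h(s) < \delta \}} \leq 4\delta$; hence for $L := b - a$ and any $\delta \in (0, L/4]$ one has $\int_{a}^{b} h \ds \geq \delta \, \abs{\{ s \in [a,b] : h(s) \geq \delta \}} \geq \delta(L - 4\delta)$, and optimizing at $\delta = L/8$ gives $\int_{a}^{b} h \ds \geq L^{2}/16$, uniformly in the interval. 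This produces the left-hand inequality of \eqref{Wfunctiondiff} with $C_{1} = \sqrt{2 c_{*}}/16$. The main obstacle is precisely this uniform quadratic lower bound: a naive integration degenerates when $[a,b]$ clusters near a well or is symmetric about the origin and straddles both wells (the extremal configuration $[-2,2]$, where the ratio $\int_{a}^{b} h \ds / (b-a)^{2}$ is minimized), and it is the Chebyshev/superlevel-set estimate that I expect to make the argument clean and position-independent.
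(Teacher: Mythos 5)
Your proposal is correct. The bijectivity argument, the upper estimate, and the reduction of the lower estimate to the pointwise bound $\tilde{\Psi}(s) \geq \min(C_{0}^{-1},1)\,(\abs{s}-1)^{2}$ all coincide with the paper's proof (the paper justifies that pointwise bound by splitting into the cases $C_{0} \geq 1$ and $C_{0} \leq 1$, but the content is identical, and your surjectivity argument via the growth of $\tilde{\Psi}$ at infinity is if anything more explicit than the paper's). Where you genuinely diverge is in the remaining inequality $\int_{a}^{b} \abs{\abs{s}-1} \ds \geq c\,(b-a)^{2}$: the paper establishes this by an exhaustive case analysis on the position of the interval relative to $\{-1,0,1\}$ --- five cases, several with three or four subcases involving ad hoc Young-inequality manipulations, occupying the bulk of the printed proof --- whereas you note that $\abs{\abs{s}-1} = \mathrm{dist}(s,\{-1,1\})$, bound the measure of the sublevel set $\{\mathrm{dist}(\cdot,\{\pm 1\}) < \delta\}$ by $4\delta$, and apply a Chebyshev-type estimate with $\delta = (b-a)/8$ to obtain the uniform constant $1/16$. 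Your route is shorter, position-independent, and would generalize verbatim to a potential with any finite zero set; the paper's casework produces marginally sharper constants in individual configurations, but nothing downstream depends on the value of $C_{1}$, so nothing is lost. Both arguments are valid.
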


\begin{proof}
For the second inequality of \eqref{Wfunctiondiff}, using $(1 + \abs{s}^{2})^{\frac{1}{2}} \leq (1 + \abs{s})$ yields
\begin{align*}
\abs{W(y_{1}) - W(y_{2})} & =  \abs{\int_{y_{2}}^{y_{1}} \sqrt{2 \tilde{\Psi}(s)} \ds} \leq \sqrt{2} \int_{y_{2}}^{y_{1}} \sqrt{1 + \abs{s}^{2}} \ds \\
&  \leq  \sqrt{2} \int_{y_{2}}^{y_{1}} (1 + \abs{s}) \ds  \leq \sqrt{2} (1 + \abs{y_{1}} + \abs{y_{2}}) \int_{y_{2}}^{y_{1}} 1 \ds.
\end{align*}
Meanwhile, for the first inequality, we claim that $\tilde{\Psi}(y) \geq \min (C_{0}^{-1}, 1) (\abs{y} - 1)^{2}$, where $C_{0}$ is the constant in \eqref{quadraticbound}.  Indeed, from \eqref{quadraticbound}, $\Psi(y) \geq C_{0}^{-1} (\abs{y} - 1)^{2}$, and so 
\begin{align*}
\tilde{\Psi}(y) \geq \min (C_{0}^{-1} (\abs{y} - 1)^{2}, 1 + \abs{y}^{2}).
\end{align*}
If $C_{0} \geq 1$, we further obtain $\tilde{\Psi}(y) \geq C_{0}^{-1} \min ( (\abs{y} - 1)^{2}, 1 + \abs{y}^{2}) = C_{0}^{-1}(\abs{y} - 1)^{2}$, and if $C_{0} \leq 1$, then 
\begin{align*}
\tilde{\Psi}(y) \geq \min (C_{0}^{-1}(\abs{y} - 1)^{2}, 1 + \abs{y}^{2}) \geq \min ((\abs{y} - 1)^{2}, 1 + \abs{y}^{2}) = (\abs{y} - 1)^{2}.
\end{align*}
This yields the claim.  Then, we see that
\begin{align*}
\abs{W(y_{1}) - W(y_{2})} = \abs{\int_{y_{2}}^{y_{1}} \sqrt{2 \tilde{\Psi}(y)} \ds} \geq  \sqrt{2} \min(C_{0}^{-1},1) \int_{y_{2}}^{y_{1}} \abs{\abs{s} - 1} \ds.
\end{align*}
Without loss of generality, we assume $y_{2} < y_{1}$, and a case analysis will yield the first inequality of \eqref{Wfunctiondiff}.  There are five cases to consider:
\begin{enumerate}
\item[(1)] $(1 \leq y_{2} < y_{1})$, we have
\begin{align*}
2 \int_{y_{2}}^{y_{1}} \abs{\abs{s} - 1} \ds & = 2 \int_{y_{2}}^{y_{1}} s-1 \ds = ((y_{1} - 1)^{2} - (y_{2} - 1)^{2}) \\
& =  (y_{1} - y_{2})(y_{1} - y_{2} + 2 y_{2} - 2) \geq \abs{y_{1} - y_{2}}^{2}.
\end{align*}
\item[(2)] $(0 \leq y_{2} < y_{1} \leq 1)$, a similar computation to case (1) yields
\begin{align*}
2 \int_{y_{2}}^{y_{1}} \abs{\abs{s} - 1} \ds & = 2 \int_{y_{2}}^{y_{1}} 1 - s \ds = ((y_{2} - 1)^{2} - (y_{1} - 1)^{2})\\
&  = (y_{2} - y_{1}) (y_{2} - y_{1} + 2 y_{1} - 2) \geq \abs{y_{1} - y_{2}}^{2}.
\end{align*}
\item[(3)] $(0 \leq y_{2} \leq 1 < y_{1})$, using triangle inequality and Young's inequality leads to
\begin{align*}
\frac{1}{4} \abs{y_{1} -1 + 1 - y_{2}}^{2} & \leq \frac{1}{2} ((y_{1} - 1)^{2} + (1 - y_{2})^{2})  = \int_{y_{2}}^{y_{1}} \abs{\abs{s} - 1} \ds.
\end{align*}
\item[(4)] $(y_{2} < y_{1} < 0)$, consider a similar argument to cases (1)-(3) with the subcases (4i) $y_{2} < y_{1} \leq -1$ where $\abs{\abs{s}-1} = -(1+s)$, (4ii) $-1 \leq y_{2} < y_{1} < 0$ where $\abs{\abs{s}-1} = 1+s$, and (4iii) $y_{2} < -1 < y_{1} < 0$ where we compute
\begin{align*}
\int_{y_{2}}^{-1} -(1+s) \ds + \int_{-1}^{y_{1}} 1+s \ds = \frac{1}{2} ((y_{1} - (-1))^{2} + (-1 - y_{2})^{2}) \geq \frac{1}{4} \abs{y_{1} - y_{2}}^{2}.
\end{align*}  
\item[(5)] $(y_{2} < 0 < y_{1})$, consider the four subcases (5i) $-1 < y_{2} < 0 < y_{1} < 1$, (5ii) $y_{2} < -1$, $y_{1} > 1$, (5iii) $y_{2} < -1$, $0 < y_{1} < 1$, and (5iv) $-1 < y_{2} < 0$, $y_{1} > 1$.  A short computation yields
\begin{align*}
2 \int_{y_{2}}^{y_{1}} \abs{\abs{s}-1} \ds = \begin{cases}
2 (y_{1} - y_{2}) - y_{1}^{2} - y_{2}^{2} & \text{ for } \mathrm{(5i)}, \\
2 + (y_{1}-1)^{2} + (1+y_{2})^{2} & \text{ for } \mathrm{(5ii)}, \\
2 - (1-y_{1})^{2} + (1+y_{2})^{2} & \text{ for } \mathrm{(5iii)}, \\
2 + (y_{1}-1)^{2} - (1+y_{2})^{2} & \text{ for } \mathrm{(5iv)}.
\end{cases}
\end{align*}
For (5i) observe that $y_{1} - y_{2} < 2$, and so $\abs{y_{1} - y_{2}}^{2} < 2 (y_{1} - y_{2})$.  Using also that $y_{1}^{2} \leq y_{1}$, $-y_{2} \geq y_{2}^{2}$ we infer 
\begin{align*}
2 (y_{1} - y_{2}) - y_{1}^{2} - y_{2}^{2} \geq y_{1} - y_{2} > \frac{1}{2} \abs{y_{1} - y_{2}}^{2}.
\end{align*}
By Young's inequality we have the inequalities $(y_{1} - y_{2})^{2} \leq 2 (y_{1}^{2} + y_{2}^{2})$, $-2 y_{1} \geq -\frac{1}{2}y_{1}^{2} - 2$ and $2 y_{2} \geq -\frac{1}{2}y_{2}^{2} - 2$, and so for (5ii)
\begin{align*}
2 + (y_{1}-1)^{2} + (1+y_{2})^{2} \geq \frac{1}{2} (y_{1}^{2} + y_{2}^{2}) \geq \frac{1}{4} \abs{y_{1}-y_{2}}^{2}.
\end{align*}
For (5iv), since $-1 < y_{2} < 0$, it holds that $-(1+y_{2})^{2} \geq -1$ and $y_{2}^{2} \leq 1$.  Thus, for $\delta \in (\frac{1}{2},1)$, $\eta \in (0, 2 - \delta^{-1})$, by Young's inequality $-2y_{1} \geq -\delta y_{1}^{2} - \delta^{-1}$ and adding a non-positive term $\eta(y_{2}^{2}-1)$, we infer that
\begin{align*}
2 + (y_{1}-1)^{2} - (1+y_{2})^{2} & \geq 2 + y_{1}^{2} - 2 y_{1}  \geq (1 - \delta) y_{1}^{2} + \eta y_{2}^{2} + 2 - \delta^{-1} - \eta \\
& \geq  \frac{1}{2} \min((1-\delta),\eta) \abs{y_{1}-y_{2}}^{2}.
\end{align*}
The last case (5iii) proceeds in a similar fashion by exchanging the roles of $y_{1}$ and $y_{2}$.
\end{enumerate}
The injectivity of $W$ follows from the first inequality of \eqref{Wfunctiondiff}.  From definition, $\tilde{\Psi}(y) = 0$ if and only if $y = \pm 1$, and the derivative $W'(y) = (\tilde{\Psi}(y))^{\frac{1}{2}}$ is positive for all $y \neq \pm 1$.  Hence $W(\cdot)$ is strictly increasing over $\R$, which together with injectivity implies that $W(\cdot)$ is bijective.
\end{proof}

\begin{lem}\label{AubinLionHolder}
Let $B$ be a Banach space and let $\{f_{n}\}_{n \in \N}$ be a bounded set in $L^{\infty}(0,T;B)$ such that, for every $0 \leq t_{1} < t_{2} \leq T$, the set $\{ \int_{t_{1}}^{t_{2}} f_{n}(t) \dt : n \in \N \}$ is relatively compact in $B$.  Moreover, suppose that there exists $\alpha \in (0,1)$ and a constant $C > 0$ such that for all $0 \leq \tau < t \leq T$ and $n \in \N$,
\begin{align}\label{uniformHolder}
\frac{\norm{f_{n}(t) - f_{n}(\tau)}_{B}}{\abs{t-\tau}^{\alpha}} \leq C.
\end{align}
Then $\{f_{n}\}_{n \in \N}$ is relatively compact in $C^{0,\beta}([0,T];B)$ for all $0 < \beta < \alpha$.
\end{lem}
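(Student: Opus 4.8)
The plan is to reduce the statement to a Arzelà--Ascoli argument in $C^{0}([0,T];B)$, and then to upgrade the resulting uniform convergence to convergence in the Hölder norm by interpolating against the uniform $\alpha$-Hölder bound \eqref{uniformHolder}. Throughout I read relative compactness sequentially, so it suffices to show that every subsequence of $\{f_{n}\}$ admits a further subsequence converging in $C^{0,\beta}([0,T];B)$.

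First I would establish \emph{pointwise} relative compactness, namely that $\{f_{n}(t) : n \in \N\}$ is relatively compact in $B$ for each fixed $t \in [0,T]$; this is where the hypothesis on the time-integrals enters. For $t \in [0,T)$ and $h>0$ with $t+h \le T$, the average $\tfrac{1}{h}\int_{t}^{t+h} f_{n}(s)\dd s$ ranges, as $n$ varies, over a relatively compact subset of $B$, being a scalar multiple of the relatively compact set $\{\int_{t}^{t+h} f_{n}\dd s\}$. The bound \eqref{uniformHolder} then gives
\[
\Bignorm{ f_{n}(t) - \tfrac{1}{h}\int_{t}^{t+h} f_{n}(s)\dd s } \le \tfrac{1}{h}\int_{t}^{t+h} \norm{f_{n}(s)-f_{n}(t)}_{B}\dd s \le C h^{\alpha},
\]
uniformly in $n$ (for $t=T$ one averages over $[T-h,T]$ instead). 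Hence $\{f_{n}(t)\}$ is approximated, uniformly in $n$ and to within $Ch^{\alpha}$, by a relatively compact set; letting $h \to 0$ and using that a set lying within arbitrarily small distance of totally bounded sets is itself totally bounded, we conclude that $\{f_{n}(t)\}$ is totally bounded, hence relatively compact in the complete space $B$.

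Next I would invoke Arzelà--Ascoli: the family $\{f_{n}\}$ is equicontinuous in $C^{0}([0,T];B)$ (indeed uniformly $\alpha$-Hölder by \eqref{uniformHolder}) and pointwise relatively compact by the previous step, so it is relatively compact in $C^{0}([0,T];B)$. Extracting a subsequence $f_{n_{k}} \to f$ in $C^{0}([0,T];B)$, the limit inherits $[f]_{\alpha,B}(\tau,t) \le C$. To reach $C^{0,\beta}$ I would interpolate: writing $g = f_{n_{k}}-f$, which still obeys \eqref{uniformHolder} with $C$ replaced by $2C$ and has $\norm{g}_{C^{0}([0,T];B)} \to 0$, I split at a threshold $\delta > 0$. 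For $\abs{t-\tau} \le \delta$,
\[
\frac{\norm{g(t)-g(\tau)}_{B}}{\abs{t-\tau}^{\beta}} \le 2C\,\abs{t-\tau}^{\alpha-\beta} \le 2C\,\delta^{\alpha-\beta},
\]
while for $\abs{t-\tau} > \delta$ one has $\frac{\norm{g(t)-g(\tau)}_{B}}{\abs{t-\tau}^{\beta}} \le 2\norm{g}_{C^{0}([0,T];B)}\,\delta^{-\beta}$. Given $\eps>0$, choosing $\delta$ small (using $\beta<\alpha$) makes the first bound $<\eps$, and then taking $k$ large makes the second bound $<\eps$; hence $\sup_{0\le\tau<t\le T}[f_{n_{k}}-f]_{\beta,B}(\tau,t)\to 0$, which together with the $C^{0}$-convergence yields $f_{n_{k}} \to f$ in $C^{0,\beta}([0,T];B)$.

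The main obstacle I anticipate is the first step: converting the compactness assumption, which is phrased only for the time-integrals $\int_{t_{1}}^{t_{2}} f_{n}\dt$, into genuine pointwise relative compactness of $\{f_{n}(t)\}$, since Arzelà--Ascoli cannot be applied otherwise. The averaging identity combined with the uniform Hölder estimate is precisely what bridges this gap, and the total-boundedness principle is what closes it; the subsequent interpolation is elementary and is exactly where the restriction $\beta<\alpha$ is consumed.
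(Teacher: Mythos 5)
Your proof is correct, and its overall skeleton coincides with the paper's: first obtain relative compactness in $C^{0}([0,T];B)$, then upgrade to $C^{0,\beta}$ by splitting the H\"{o}lder seminorm at a threshold $\delta$ and playing the uniform $\alpha$-H\"{o}lder bound on $\abs{t-\tau}\leq\delta$ against the $C^{0}$-convergence on $\abs{t-\tau}\geq\delta$; that interpolation step is essentially identical in both arguments. Where you diverge is in how the $C^{0}$-compactness is obtained. The paper verifies the uniform decay of time translations $\sup_{t}\norm{(\tau_{h}f_{n})(t)-f_{n}(t)}_{B}\leq C\abs{h}^{\alpha}$ and then invokes Simon's compactness criterion \cite[Thm.~1]{Simon86}, which takes the relative compactness of the time-integrals as a hypothesis directly (it also remarks that one could use the Banach-valued Arzel\`{a}--Ascoli theorem, without spelling out where the pointwise compactness would come from). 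You instead make that route self-contained: you derive pointwise relative compactness of $\{f_{n}(t)\}$ by approximating $f_{n}(t)$ to within $Ch^{\alpha}$, uniformly in $n$, by the averages $\tfrac{1}{h}\int_{t}^{t+h}f_{n}\ds$, which range over a relatively compact set, and then use the fact that a set lying within arbitrarily small distance of totally bounded sets is totally bounded. This effectively reproves the relevant direction of Simon's criterion in the H\"{o}lder-continuous setting; the cost is a slightly longer argument, the gain is that the proof is elementary and does not rely on the external compactness theorem (and it fills in the pointwise-compactness step that the paper's Arzel\`{a}--Ascoli remark leaves implicit). Both arguments consume the hypothesis $\beta<\alpha$ only in the final interpolation, as they should.
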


\begin{proof}
Fix $h > 0$ and set $( \tau_{h}f)(t) = f(t+h)$ as the time translations of $f$.  Then, for $0 \leq t \leq T-h$, we have by \eqref{uniformHolder},
\begin{align}\label{uniformconv}
\sup_{t \in [0,T-h]} \norm{(\tau_{h}f_{n})(t) - f_{n}(t)}_{B} \leq \sup_{t \in [0,T-h]} C \abs{h}^{\alpha} = C \abs{h}^{\alpha} \to 0
\end{align}
as $h \to 0$ uniformly in $n \in \N$.  Hence, $\{f_{n}\}_{n \in \N}$ is relatively compact in $C^{0}([0,T];B)$ \cite[Thm.~1]{Simon86}, and so there exists a function $f \in C^{0}([0,T];B)$ such that along a subsequence $f_{n_{j}} \to f$ strongly in $C^{0}([0,T];B)$ as $j \to \infty$.  Alternatively, one may use \eqref{uniformHolder} to deduce equicontinuity of $\{f_{n}\}_{n \in \N}$ and apply the Banach-space-valued Arzel\`{a}--Ascoli theorem \cite[Ch.~III, \S~3, Thm.~3.1]{Lang}.  Furthermore, passing to the limit $n_{j} \to \infty$ in \eqref{uniformHolder} yields
\begin{align*}
\sup_{0 \leq \tau < t \leq T} \left ( [f]_{\alpha, B}(\tau, t) \right ) \leq C,
\end{align*}
and so the limit function $f$ belongs to $C^{0,\alpha}([0,T];B)$.  It remains to show that 
\begin{align}\label{Holdernorm:conv}
\sup_{0 \leq \tau < t \leq T} \left ( [f_{n_{j}} - f]_{\beta, B}(\tau, t) \right ) \to 0 \text{ as } j \to \infty
\end{align}
for any $\beta \in (0,\alpha)$, and the proof is similar to the proof of compact embeddings in H\"{o}lder spaces.  Let $\eta > 0$ and $\delta > 0$ be arbitrary and for $0 \leq \tau < t \leq T$, set $X_{1} := \{ (\tau, t) : \abs{t - \tau} \leq \delta\}$ and $X_{2} := \{ (\tau,t) : \abs{t - \tau} \geq \delta \}$.  Then, a short computation shows that
\begin{align*}
\mathrm{(1)} & \quad \sup_{(\tau,t) \in X_{1}} \left ( [f_{n_{j}} - f]_{\beta, B}(\tau,t) \right ) = \sup_{(\tau,t) \in X_{1}} \left ( \abs{t-\tau}^{\alpha - \beta} [f_{n_{j}} - f]_{\alpha, B}(\tau,t) \right ) \\
& \quad \quad  \leq \sup_{(\tau,t) \in X_{1}} \left ( [f_{n_{j}}]_{\alpha, B}(\tau,t) + [f]_{\alpha,B}(\tau, t) \right ) \delta^{\alpha - \beta} \leq C \delta^{\alpha - \beta}, \\
\mathrm{(2)} & \quad \sup_{(\tau,t) \in X_{2}} \left ([f_{n_{j}} - f]_{\beta, B}(\tau,t) \right ) \leq 2 \delta^{-\beta} \norm{f_{n_{j}} - f}_{C^{0}([0,T];B)},
\end{align*}
and so upon choosing $\delta$ sufficiently small such that $C \delta^{\alpha - \beta} < \frac{1}{2} \eta$, and then choosing $j$ sufficiently large so that $C \delta^{-\beta} \norm{f_{n_{j}} - f}_{C^{0}([0,T];B)} < \frac{1}{2} \eta$, we arrive at
\begin{align*}
\sup_{0 \leq \tau < t \leq T} \left ( [f_{n_{j}} - f]_{\beta, B}(\tau, t) \right ) \leq C \left ( \delta^{\alpha-\beta} +  \delta^{-\beta} \norm{f_{n_{j}} - f}_{C^{0}([0,T];B)} \right ) < \eta,
\end{align*}
for arbitrary $\eta > 0$.  This yields \eqref{Holdernorm:conv}.
\end{proof}

\section{Main result}\label{sec:Main}
We introduce a new variable $\e{\theta}$ defined as
\begin{align}\label{theta:defn}
\e{\theta} := \e{\sigma} - \chi \e{\varphi}, 
\end{align}
and consider the Cahn--Hilliard--Darcy system
\begin{subequations}\label{CHD}
\begin{alignat}{3}
\div \e{\bm{v}} & = \HH && \text{ in } Q, \label{CHD:div} \\
\K \e{\bm{v}} & = - \nabla \e{p} + (\e{\mu} + \chi \e{\theta} + \chi^{2} \e{\varphi}) \nabla \e{\varphi} && \text{ in } Q, \label{CHD:Darcy} \\
\pd_{t} \e{\varphi} + \div (\e{\varphi} \e{\bm{v}}) & = \div (m(\e{\varphi}) \nabla \e{\mu}) + \U \e{\varphi} && \text{ in } Q, \label{CHD:varphi} \\
\e{\mu} & = \eps^{-1} \Psi'(\e{\varphi}) - \eps \Laplace \e{\varphi} - \chi \e{\theta} - \chi^{2} \e{\varphi} && \text{ in } Q, \label{CHD:mu} \\
\pd_{t} (\e{\theta} + \chi \e{\varphi}) + \div ( (\e{\theta} + \chi \e{\varphi}) \e{\bm{v}}) & = \div (n(\e{\varphi}) \nabla \e{\theta}) + \So && \text{ in } Q, \label{CHD:theta} \\
\e{\bm{v}} \cdot \bm{\nu} = \pdnu \e{\varphi} & = \pdnu \e{\mu} = \pdnu \e{\theta} = 0 && \text{ on } \Sigma, \label{CHD:bc} \\
\e{\varphi}(0) = \e{\varphi}_{0}, \quad \e{\theta}(0) & = \e{\sigma}_{0} - \chi \e{\varphi}_{0} && \text{ in } \Omega. \label{CHD:ini}
\end{alignat}
\end{subequations}
The energy associated to \eqref{CHD} is
\begin{align}\label{CHD:energy}
\EE(\e{\varphi}, \e{\theta}) = \int_{\Omega} \frac{1}{\eps} \Psi(\e{\varphi}) + \frac{\eps}{2} \abs{\nabla \e{\varphi}}^{2} + \frac{1}{2} \abs{\e{\theta}}^{2} - \frac{\chi^{2}}{2} \abs{\e{\varphi}}^{2} \dx,
\end{align}
and solutions satisfy the energy identity
\begin{equation}\label{CHD:energy:id}
\begin{aligned}
& \EE(\e{\varphi}(t), \e{\theta}(t)) + \int_{Q_{t}} m(\e{\varphi}) \abs{\nabla \e{\mu}}^{2} + n(\e{\varphi}) \abs{\nabla \e{\theta}}^{2} + \K \abs{\e{\bm{v}}}^{2} \dx \ds \\
& \quad = \EE(\e{\varphi}_{0}, \e{\theta}_{0}) + \int_{Q_{t}} \U \e{\varphi} \e{\mu} + \So \e{\theta} + \HH \left( \e{p} - \e{\varphi} \e{\mu} - \tfrac{1}{2} \abs{\e{\theta} + \chi \e{\varphi}}^{2} \right ) \dx \ds,
\end{aligned}
\end{equation}
which can be obtained by testing \eqref{CHD:varphi} with $\e{\mu}$, \eqref{CHD:mu} with $\pd_{t} \e{\varphi}$, \eqref{CHD:theta} with $\e{\theta}$, \eqref{CHD:Darcy} with $\e{\bm{v}}$, summing the resulting equalities, integrating by parts and using \eqref{CHD:div}, and integrating in time.  By taking the divergence of \eqref{CHD:Darcy} and using \eqref{CHD:div}, we obtain an elliptic equation for the pressure $\e{p}$:
\begin{equation}\label{Pressure:poisson}
\begin{aligned}
-\Laplace \e{p} = \K \HH - \div ((\e{\mu} + \chi \e{\theta} + \chi^{2} \e{\varphi}) \nabla \e{\varphi}) & \text{ in } Q, \\
\pdnu \e{p} = 0 & \text{ on } \Sigma,
\end{aligned}
\end{equation}
with homogeneous Neumann boundary conditions.  Since the addition of a time-dependent constant is also a solution to the above elliptic equation, we demand $\e{p}$ satisfies the mean-zero condition $\mean{\e{p}} = 0$ for unique solvability.  Furthermore, integrating \eqref{CHD:div} and the boundary condition $\e{\bm{v}} \cdot \bm{\nu} = 0$ on $\Sigma$ necessary implies that the source term $\HH$ has zero spatial mean.

The formal sharp interface limit of \eqref{CHD} is
\begin{subequations}\label{CHD:SIM}
\begin{alignat}{3}
\div \bm{v} = \HH , \quad \K \bm{v} = - \nabla p  & \text{ in } \Omega_{+} \cup \Omega_{-}, \label{S:div} \\
- \div (m \nabla \mu) = \U - \HH, \quad \pd_{t} \theta + \div ( \theta \bm{v} - n \nabla \theta) = \So - \chi \HH & \text{ in } \Omega_{+}, \label{S:+} \\
- \div (m \nabla \mu) = - \U + \HH, \quad \pd_{t} \theta + \div ( \theta \bm{v} - n \nabla \theta) = \So + \chi \HH & \text{ in } \Omega_{-}, \label{S:-} \\
\jump{\bm{v}} \cdot \bm{\nu} = 0, \quad \jump{\mu} = 0, \quad \jump{\theta} = 0, \quad 2 \mu + 2 \chi \theta  = \jump{p} = \kappa & \text{ on } \Gamma,  \label{S:G1} \\
2(-\velo + \bm{v} \cdot \bm{\nu}) = \jump{m \nabla \mu} \cdot \bm{\nu}, \quad 2 \chi (-\velo + \bm{v} \cdot \bm{\nu}) = \jump{n \nabla \theta} \cdot \bm{\nu} & \text{ on } \Gamma, \label{S:G2} \\
m \pdnu \mu = 0, \quad n \pdnu \theta = 0, \quad \bm{v} \cdot \bm{\nu} = 0 & \text{ on } \pd \Omega \setminus \Gamma. \label{S:G3}
\end{alignat}
\end{subequations}
In the above $\kappa, \mathcal{V}$ and $\bm{\nu}$ denote the mean curvature, normal velocity, and unit normal of the interface $\Gamma$, respectively, and we have used the notation 
\begin{align*}
m = \begin{cases}
m(1) & \text{ in } \Omega_{+}, \\ m(-1) & \text{ in } \Omega_{-},
\end{cases} \quad n = \begin{cases}
n(1) & \text{ in } \Omega_{+}, \\ n(-1) & \text{ in } \Omega_{-}.
\end{cases}
\end{align*}

We remark that the jump condition $\jump{\theta} = 0$ is equivalent to the jump condition $\jump{\sigma} = 2 \chi$ as observed in the formal analysis in Garcke et al.~\cite{GLSS}.  Indeed, we will show below that $\e{\varphi}$ converges (along a subsequence) to $-1 + 2 \chara_{\Omega_{+}}$ for some set $\Omega_{+} \subset Q$ of bounded variation.  If $\theta$ and $\sigma$ are the limits of $\{\e{\theta}\}_{\eps > 0}$ and $\{\e{\sigma}\}_{\eps > 0}$, respectively, then by \eqref{theta:defn} one obtains
\begin{align*}
\theta = \sigma - \chi + 2 \chi \chara_{\Omega_{+}} = \begin{cases}
\sigma_{+} + \chi & \text{ in } \Omega_{+}, \\
\sigma_{-} - \chi & \text{ in } \Omega_{-},
\end{cases} \quad \text{ so that } \jump{\sigma} = 2 \chi \; \Leftrightarrow \, \jump{\theta} = 0.
\end{align*}
Hence, if we can show that in the limit $\{\e{\theta}\}_{\eps > 0}$ converges to a function $\theta$ with $\jump{\theta} = 0$ on the interface $\Gamma$, we have ascertained the jump condition \eqref{Intro:jump} in the sharp interface limit when the parameter $\chi$ is non-zero.

The main result on the sharp interface limit of \eqref{CHD} is formulated as follows.
\begin{thm}[Sharp interface limit]\label{thm:main}
For $\eps > 0$, let $(\e{\varphi}, \e{\mu},\e{\theta}, \e{\bm{v}}, \e{p})$ be a solution to \eqref{CHD} with initial data $(\e{\varphi}_{0}, \e{\sigma}_{0})$.  Then, there exists a sequence $\{\eps_{j}\}_{j \in \N}$, $\eps_{j} \to 0$ as $j \to \infty$ with the following properties.

\begin{enumerate}
\item[$\mathrm{(1)}$] There exists a measurable set $\Omega_{+} \subset Q$ with
\begin{align*}
\Omega_{+} := \bigcup_{t \in [0,T]} \Omega_{+}(t) \times \{t\}, \quad \Omega_{+}(t) \subset \Omega \quad \text{ for all } t \in [0,T]
\end{align*}
such that $\chara_{\Omega_{+}} \in L^{\infty}(0,T;BV(\Omega, \{0,1\})) \cap C^{0,\frac{1}{4}}([0,T];L^{1}(\Omega))$, $\chara_{\Omega_{+}} \vert_{t=0} = \chara_{\Omega_{+}(0)}$ in $L^{2}(\Omega)$, and
\begin{align*}
\varphi^{\eps_{j}} \to \varphi := -1 + 2 \chara_{\Omega_{+}(t)} \text{ a.e.~in } Q \text{ and strongly in } C^{0,\alpha}([0,T];L^{2}(\Omega))
\end{align*}
for any $\alpha \in (0, \frac{1}{8})$.

\item[$\mathrm{(2)}$] There exist Radon measures $\lambda, \{\lambda_{ik}\}_{1 \leq i, k \leq d}$ on $Q$, Radon measures $\lambda^{t}$, $\{\lambda^{t}_{ik}\}_{1 \leq i,k \leq d}$ on $\Omega$ for a.e.~$t \in (0,T)$, such that $\dd \lambda(x,t) = \dd \lambda^{t}(x) \dt$, $\dd \lambda_{ik}(x,t) = \dd \lambda^{t}_{ik}(x) \dt$ and
\begin{align*}
\left ( \frac{1}{\eps_{j}} \Psi(\varphi^{\eps_{j}}) + \frac{\eps_{j}}{2} \abs{\nabla \varphi^{\eps_{j}}}^{2} \right ) \dx \dt & \to \lambda \text{ weakly* in } \M(Q), \\
\left ( \eps_{j} \pd_{x_{i}} \varphi^{\eps_{j}} \pd_{x_{k}} \varphi^{\eps_{j}} \right ) \dx \dt & \to \lambda_{ik} \text{ weakly* in } \M(Q),
\end{align*}
with
\begin{align*}
\abs{\nabla \chara_{\Omega_{+}(t)}}(\Omega) \dx & \leq \dd \lambda^{t}(x) \text{ for a.e. } t \in (0,T).
\end{align*}

\item[$\mathrm{(3)}$] There exists a Radon measure $V = V^{t} \dt$ on $Q \times \PP^{d-1}$ such that
\begin{align*}
\int_{0}^{T} \inner{\delta V^{t}}{\bm{Y}} \dt = \int_{Q} \nabla \bm{Y} : \left ( \id \dd \lambda^{t} - (\dd \lambda^{t}_{ik} \, )_{ik} \right ) \dt \quad \forall \bm{Y} \in C^{1}_{0}(\Omega; \R^{d}).
\end{align*}
Furthermore, for a.e.~$t \in (0,T)$, there exist $\lambda^{t}$-measurable functions $\{c^{t}_{i}\}_{1 \leq i \leq d}$ and vector-valued functions $\{\bm{\nu}^{t}_{i}\}_{1 \leq i \leq d}$ such that
\begin{align*}
0 \leq c^{t}_{i} \leq 1, \quad \sum_{i=1}^{d} c^{t}_{i} \geq 1, \quad \sum_{i=1}^{d} \bm{\nu}^{t}_{i} \otimes \bm{\nu}^{t}_{i} = \id \quad \lambda^{t}-\text{a.e.}, 
\end{align*}
and the varifold $V = V^{t} \dt$ has the following representation
\begin{align}\label{Vari:represent}
\int_{\Omega \times \PP^{d-1}} Y(x,P) \dd V^{t}(x,P) = \int_{\Omega} \sum_{i=1}^{d} c^{t}_{i}(x) Y(x, \bm{\nu}^{t}_{i}(x)) \dd \lambda^{t}(x),
\end{align}
for all $Y \in C^{0}_{c}(\Omega \times \PP^{d-1})$.

\item[$\mathrm{(4)}$] There exist functions $\mu \in L^{2}(0,T;H^{1}(\Omega))$, $\theta \in C^{0}_{\mathrm{w}}([0,T];L^{2}(\Omega)) \cap L^{2}(0,T;H^{1}(\Omega))$, $\bm{v} \in L^{2}(Q)^{d}$ and $p \in L^{2}(0,T;L^{2}_{0}(\Omega))$ such that
\begin{subequations}
\begin{alignat}{3}
\mu^{\eps_{j}} \; (\text{resp. } \theta^{\eps_{j}}) & \longrightarrow \mu \; (\text{resp. } \theta) \text{ weakly in } L^{2}(0,T;H^{1}(\Omega)), \label{mu:theta:L2H1} \\
\theta^{\eps_{j}} & \longrightarrow \theta \text{ weakly* in } L^{\infty}(0,T;L^{2}(\Omega)), \label{theta:LinftyL2}\\
\theta^{\eps_{j}} & \longrightarrow \theta \text{ strongly in } L^{2}(Q), \label{theta:L2L2} \\
\bm{v}^{\eps_{j}} & \longrightarrow \bm{v} \text{ weakly in } L^{2}(Q)^{d}, \label{v:L2L2} \\
p^{\eps_{j}} & \longrightarrow p \text{ weakly in } L^{2}(Q), \label{p:L2L2} 
\end{alignat}
\end{subequations}
with 
\begin{align}\label{theta:ini:attain}
\inner{\theta(0)}{\phi} = \inner{\theta_{0}}{\phi} = \inner{\sigma_{0} + \chi - 2 \chi \chara_{\Omega_{+}(0)}}{\phi} \text{ for all } \phi \in L^{2}(\Omega).
\end{align}

\item[$\mathrm{(5)}$] Let $\varphi := -1 + 2 \chara_{\Omega_{+}}$.  Then, the quintuple $(V, \mu, \theta, \bm{v}, p)$ is a varifold solution to the sharp interface limit \eqref{CHD:SIM} with initial values $(\chara_{\Omega_{+}(0)}, \theta_{0})$ in the sense that
\begin{subequations}\label{CHD:SIM:varifold}
\begin{alignat}{3}
\label{CHD:SIM:div}
0 & = \int_{Q} \bm{v} \cdot \nabla \zeta + \zeta \HH \dx \dt, \\
\label{CHD:SIM:Darcy}
0 & = \int_{Q} \K \bm{v} \cdot \bm{Y} - p \div \bm{Y} + 2 \chara_{\Omega_{+}} \div (( \mu + \chi \theta) \bm{Y}) \dx \dt, \\
\label{CHD:SIM:mu} 
0 & = \int_{Q} m(\varphi)  \nabla \mu \cdot \nabla \zeta  - 2 \chara_{\Omega_{+}} (\pd_{t} \zeta + \nabla \zeta \cdot \bm{v} + \U \zeta) + (\U - \HH) \zeta \dx \dt \\
\notag & \quad - \int_{\Omega} 2 \chara_{\Omega_{+}(0)} \zeta(\cdot, 0) \dx, \\
\label{CHD:SIM:theta} 
0 & = \int_{Q} n(\varphi) \nabla \theta \cdot \nabla \zeta - (2 \chi \chara_{\Omega_{+}(t)} + \theta) (\pd_{t}\zeta + \nabla \zeta \cdot \bm{v}) \dx \dt \\
\notag & \quad - \int_{Q}  \So \zeta + \chi \HH \zeta \dx \dt - \int_{\Omega} (2 \chi \chara_{\Omega_{+}(0)} + \theta_{0}) \zeta(\cdot,0) \dx, \\
\label{CHD:SIM:kappa} 
0 & = \int_{Q} 2 \chara_{\Omega_{+}} \div ((\mu + \chi \theta) \bm{Y}) \dx \dt - \int_{0}^{T} \inner{\delta V^{t}}{\bm{Y}} \dt
\end{alignat}
\end{subequations}
hold for all $\zeta \in C^{\infty}(\overline{Q})$ such that $\zeta(T) = 0$ and for all $\bm{Y} \in C^{0}([0,T];C^{1}_{0}(\Omega; \R^{d}))$.  Furthermore, for a.e.~$0 \leq \tau < t \leq T$, it holds that 
\begin{equation}\label{CH:SIM:energy}
\begin{aligned}
& \lambda^{t}(\Omega) + \frac{1}{2} \norm{\theta(t)}_{L^{2}}^{2} + \int_{\tau}^{t} \int_{\Omega} m(\varphi) \abs{\nabla \mu}^{2} + n(\varphi) \abs{\nabla \theta}^{2} + \K \abs{\bm{v}}^{2} \dx \ds \\
& \leq \lambda^{\tau}(\Omega) + \frac{1}{2} \norm{\theta(\tau)}_{L^{2}}^{2} + \int_{\tau}^{t} \int_{\Omega} \U \varphi \mu + \So \theta + \HH \left ( p - \mu \varphi + \tfrac{1}{2} \abs{\theta + \chi \varphi}^{2} \right ) \dx \ds.
\end{aligned}
\end{equation}
\end{enumerate}
\end{thm}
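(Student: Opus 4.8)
The plan is to carry out the weak-compactness programme of Chen \cite{Chen96} in four stages: derive $\eps$-uniform estimates from the energy identity \eqref{CHD:energy:id}, extract convergent subsequences, construct the limiting measures and varifold, and pass to the limit in the weak formulations. \textbf{First uniform estimate.} The heart of the matter is to close \eqref{CHD:energy:id} by a Gr\"onwall argument, which requires controlling the three source contributions on its right-hand side absent of any a priori energy bound. I would proceed by a continuation argument: suppose $\EE(\e{\varphi}(t),\e{\theta}(t)) \leq M$ on a maximal subinterval $[0,t^{*}]$. Lemma~\ref{lem:Psigeqabsvarphi-1square} gives $\norm{\,\abs{\e{\varphi}}-1\,}_{L^{2}}^{2} \leq C_{0}\eps \int_{\Omega}\eps^{-1}\Psi(\e{\varphi})\dx$, whence $\frac{1}{\abs{\Omega}}\int_{\Omega}\abs{\e{\varphi}}\dx \leq 1 + C(\eps M)^{1/2}$. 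Since $\frac{\dd}{\dt}\mean{\e{\varphi}} = \mean{\U\e{\varphi}}$ and $\U$ has zero mean, this bound together with \eqref{cond:T} keeps $\mean{\e{\varphi}(t)}$ inside a compact subinterval $[-1+\delta_{0},1-\delta_{0}]$ of $(-1,1)$ for all small $\eps$ on $[0,t^{*}]$. With the mean of $\e{\varphi}$ thus controlled, I would adapt \cite[Lem.~3.4]{Chen96} to bound $\abs{\mean{\e{\mu}}}$ by the energy, so that the splitting $\int_{\Omega}\U\e{\varphi}\e{\mu} = \int_{\Omega}\U\e{\varphi}(\e{\mu}-\mean{\e{\mu}}) + \mean{\e{\mu}}\int_{\Omega}\U\e{\varphi}$ and the Poincar\'{e} inequality absorb $\norm{\nabla\e{\mu}}_{L^{2}}$ into the dissipation. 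The pressure term $\int_{\Omega}\HH\e{p}$ is handled by the Ne\v{c}as inequality and the elliptic problem \eqref{Pressure:poisson}: writing the capillary force in Korteweg form $(\e{\mu}+\chi\e{\theta}+\chi^{2}\e{\varphi})\nabla\e{\varphi} = \nabla(\eps^{-1}\Psi(\e{\varphi}) + \tfrac{\eps}{2}\abs{\nabla\e{\varphi}}^{2}) - \eps\div(\nabla\e{\varphi}\otimes\nabla\e{\varphi})$ bounds $\norm{\nabla\e{p}}$ in $H^{-1}$ by the energy plus $\norm{\e{\bm{v}}}_{L^{2}}$, and Young's inequality absorbs the velocity into the dissipation $\K\abs{\e{\bm{v}}}^{2}$. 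The remaining terms $\int_{\Omega}\HH\e{\varphi}\e{\mu}$ and $\int_{\Omega}\HH\abs{\e{\theta}+\chi\e{\varphi}}^{2}$, together with the indefinite term $-\tfrac{\chi^{2}}{2}\abs{\e{\varphi}}^{2}$ under the energy, are controlled using the growth bound \eqref{Psi:lowerbound} with $q\geq 4$. A Gr\"onwall argument then bounds the energy uniformly and forces $t^{*}=T$.

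\textbf{Consequences and compactness.} From the closed estimate I obtain uniform bounds on $\sup_{t}\int_{\Omega}\eps^{-1}\Psi(\e{\varphi}) + \tfrac{\eps}{2}\abs{\nabla\e{\varphi}}^{2}$, on $\norm{\nabla\e{\mu}}_{L^{2}(Q)}$, $\norm{\nabla\e{\theta}}_{L^{2}(Q)}$, $\norm{\e{\bm{v}}}_{L^{2}(Q)}$, $\norm{\e{\theta}}_{L^{\infty}(L^{2})}$ and $\norm{\e{p}}_{L^{2}(Q)}$, which yield the weak and weak-$*$ limits \eqref{mu:theta:L2H1}--\eqref{p:L2L2} for $\mu$, $\theta$, $\bm{v}$, $p$ (part~(4)). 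For the order parameter I would use the Modica--Mortola structure: $\abs{\nabla W(\e{\varphi})} \leq \eps^{-1}\tilde{\Psi}(\e{\varphi}) + \tfrac{\eps}{2}\abs{\nabla\e{\varphi}}^{2}$ bounds $W(\e{\varphi})$ in $L^{\infty}(0,T;BV(\Omega))$, and equation \eqref{CHD:varphi} provides H\"older-in-time control, so that Lemma~\ref{AubinLionHolder} gives strong convergence of $W(\e{\varphi})$ in $C^{0,\beta}([0,T];L^{1})$. The coercivity estimate of Lemma~\ref{lem:ModicaAnsatz:BijW} transfers this to $\e{\varphi}\to\varphi$ in $C^{0,\alpha}([0,T];L^{2})$ with $\alpha<\tfrac{1}{8}$, while the bound on $\int_{\Omega}\eps^{-1}\Psi(\e{\varphi})$ forces $\Psi(\varphi)=0$ a.e., i.e.\ $\varphi = -1+2\chara_{\Omega_{+}}$ with $\chara_{\Omega_{+}}\in L^{\infty}(0,T;BV(\Omega,\{0,1\}))$, giving part~(1). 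An Aubin--Lions argument using $\pd_{t}\e{\theta}$ bounds from \eqref{CHD:theta} upgrades $\e{\theta}\to\theta$ to strong convergence in $L^{2}(Q)$, as in \eqref{theta:L2L2}.

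\textbf{Measures, varifold and passage to the limit.} The energy densities and the tensors $\eps\pd_{x_{i}}\e{\varphi}\pd_{x_{k}}\e{\varphi}$ are bounded in $L^{1}(Q)$, hence converge weakly-$*$ to Radon measures $\lambda,\lambda_{ik}$; disintegration in time and lower semicontinuity (using $\abs{\nabla W(\varphi)} = \abs{\nabla\chara_{\Omega_{+}(t)}}$, valid since $\int_{-1}^{1}\sqrt{2\Psi(s)}\ds = 1$) give part~(2), and the measure-theoretic decomposition of Chen yields the varifold representation \eqref{Vari:represent} of part~(3). To pass to the limit I would play the strong convergence of $\e{\varphi}$ (and of $m(\e{\varphi}),n(\e{\varphi})$) against the weak convergence of $\nabla\e{\mu},\nabla\e{\theta},\e{\bm{v}},\e{p},\e{\theta}$ in the weak forms of \eqref{CHD:div}--\eqref{CHD:theta}, obtaining \eqref{CHD:SIM:div}, \eqref{CHD:SIM:mu} and \eqref{CHD:SIM:theta}, the strong-times-weak products being justified by the $L^{2}$-strong convergence of $\e{\varphi}$ and $\e{\theta}$; the initial datum \eqref{theta:ini:attain} follows from weak continuity in time. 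The Korteweg identity above is again decisive for Darcy's law: testing \eqref{CHD:Darcy} with $\bm{Y}$ and integrating by parts splits the capillary force into a term converging to $2\chara_{\Omega_{+}}\div((\mu+\chi\theta)\bm{Y})$ and a surface-tension part $\int_{Q}\nabla\bm{Y}:(\id\,\dd\lambda^{t} - (\dd\lambda^{t}_{ik})_{ik})$, which by part~(3) equals $\int_{0}^{T}\inner{\delta V^{t}}{\bm{Y}}\dt$; this produces \eqref{CHD:SIM:Darcy} and, upon isolating the curvature contribution, the varifold relation \eqref{CHD:SIM:kappa}. Finally the energy inequality \eqref{CH:SIM:energy} follows from \eqref{CHD:energy:id} by weak lower semicontinuity of the dissipation norms and of $\lambda^{t}(\Omega)$. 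The main obstacle is the first uniform estimate: controlling $\mean{\e{\varphi}}$ away from $\pm1$, $\mean{\e{\mu}}$, and the non-solenoidal pressure contribution $\int_{\Omega}\HH\e{p}$ simultaneously and without an a priori energy bound; the identification of the capillary limit with the first variation of the varifold is the other demanding step, though it follows the established route of Chen \cite{Chen96} and Abels--R\"oger \cite{AbelsRoger}.
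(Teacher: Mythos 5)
Your overall programme (energy identity $\to$ uniform estimates $\to$ Modica--Mortola compactness $\to$ Radon measures and varifold $\to$ passage to the limit) coincides with the paper's, and the compactness, varifold-construction and limit-passage stages are described correctly in outline. The divergence --- and the genuine gap --- is in the first uniform estimate, which is precisely the step the paper identifies as the main obstacle. Your continuation argument proposes to assume $\EE(\e{\varphi}(t),\e{\theta}(t))\leq M$ on a maximal subinterval, use this to keep $\mean{\e{\varphi}}$ away from $\pm 1$, and then invoke the Chen-type bound on $\abs{\mean{\e{\mu}}}$ itself. But that bound is obtained by testing \eqref{CHD:mu} against $\nabla\psi\cdot\nabla\e{\varphi}$ with $-\Laplace\psi=-(\e{\varphi}_{\eta}-\mean{\e{\varphi}_{\eta}})$ and then \emph{dividing} by $\int_{\Omega}(\e{\varphi}_{\eta}-\mean{\e{\varphi}_{\eta}})\e{\varphi}\dx$; keeping that quantity bounded below forces the mollification scale $\eta$ (and hence $\norm{\psi}_{C^{2}}\sim\eta^{-1}$) to degenerate as $M$ grows, since $\norm{\e{\varphi}_{\eta}-\e{\varphi}}_{L^{2}}^{2}\lesssim\eta\,\GG$ and $\norm{\,\abs{\e{\varphi}}-1\,}_{L^{2}}^{2}\lesssim\eps\,\GG$ are only controlled by the hypothesised energy. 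The resulting Gr\"onwall source then carries $M$-dependent constants, and one checks that $(\EE_{0}+C(M)T)e^{CT}<M$ cannot be arranged for the full $T$ permitted by \eqref{cond:T}; the continuation closes only under an extra smallness condition on $T$. The paper avoids this circularity entirely: in the first estimate it never bounds $\abs{\mean{\e{\mu}}}$, but instead chooses $\psi$ to solve $-\Laplace\psi=-\U$ (possible because $\mean{\U}=0$), so that the identity \eqref{RHS:mean:mu} bounds exactly the product $\mean{\e{\mu}}\int_{\Omega}\U\e{\varphi}\dx$ with the $\eps$- and energy-independent constant $\norm{\psi}_{C^{2}}\leq C\norm{\U}_{C^{0}}$, and no lower bound or division is needed. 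The estimate of $\abs{\mean{\e{\mu}}}$ (hence $\norm{\e{\mu}}_{L^{2}(Q)}$) is deferred until after \eqref{CHD:unif:est:1} is available.

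The second gap is the pressure contribution $\int_{Q_{t}}\HH\e{p}$. Writing the capillary force in Korteweg form and invoking Ne\v{c}as requires $\nabla\e{p}$ bounded in $H^{-1}$, but the Korteweg stress equals $\nabla\e{e}-\eps\div(\nabla\e{\varphi}\otimes\nabla\e{\varphi})$ with $\e{e}$ and $\eps\nabla\e{\varphi}\otimes\nabla\e{\varphi}$ uniformly bounded only in $L^{1}$, so their (distributional) derivatives live in $(W^{1,\infty}_{0})'$-type spaces, not in $H^{-1}$; no uniform $L^{2}$ bound on $\e{p}$ follows this way. The paper instead decomposes $\e{p}=\e{z}+\mean{\e{\mu}}(\e{\varphi}-\mean{\e{\varphi}})$, uses $\mean{\HH}=0$ to annihilate the $\mean{\e{\mu}}$-part in $\int_{\Omega}\HH(\e{p}-\mean{\e{\mu}}\e{\varphi})\dx$, and estimates $\norm{\e{z}}_{L^{2}}$ by duality against $\NN(\e{z})\in H^{2}_{n}$, integrating by parts so that only $\norm{\e{\varphi}}_{L^{3}}$, $\norm{\e{\varphi}}_{L^{4}}^{2}$ and the dissipation appear --- this is where the hypothesis $q\geq 4$ is actually consumed (via $\norm{\e{\varphi}}_{L^{4}}^{2}\leq C(1+\int_{\Omega}\Psi(\e{\varphi})\dx)$), not in absorbing the indefinite term $-\tfrac{\chi^{2}}{2}\abs{\e{\varphi}}^{2}$, which only needs the quadratic bound \eqref{varphiL2:Psi:L1} together with $\eps\leq k_{0}/\chi^{2}$. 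You should rework the first estimate along these two lines before the rest of your (otherwise sound) plan can proceed.
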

\begin{remark}
The jump conditions $\jump{\mu} = \jump{\theta} = 0$ in \eqref{CHD:SIM} are implicitly encoded in the requirement that $\mu$ and $\theta$ belong to $L^{2}(0,T;H^{1}(\Omega))$.  
\end{remark}

\begin{remark}
Theorem~\ref{thm:main} generalizes the result of Melchionna and Rocca \cite{RM} and of Fei \cite{Fei} (case $\alpha = 0$) by considering source terms $\U \varphi$ and $\HH$, and a coupling to a convection-reaction-diffusion equation through the non-zero parameter $\chi$.
\end{remark}
We end this section with a brief illustration on how \eqref{CHD:SIM:varifold} forms an appropriate weak formulation for \eqref{CHD:SIM}.  Assume for the moment that \eqref{CHD:SIM} has a sufficiently smooth solution $(\Omega_{\pm}, \Gamma, \mu, \theta, \bm{v}, p, \kappa)$, and denote the integration with respect to the Hausdorff measure on $\Gamma$ by $\dhaus$.  We apply Reynold's transport theorem to $\int_{\Omega_{+}(t)} \zeta \dx$ for functions $\zeta \in C^{1}(\overline{Q})$ such that $\zeta(T) = 0$, leading to
\begin{align}\label{Reynolds}
-\int_{\Omega} \chara_{\Omega_{+}(0)} \zeta(\cdot, 0) \dx = \int_{0}^{T} \int_{\Omega} \chara_{\Omega_{+}(t)} \pd_{t}\zeta \dx \dt - \int_{0}^{T} \int_{\Gamma(t)} \velo \zeta \dhaus \dt,
\end{align}
where we recall that $\velo$ is the normal velocity of $\Gamma$ associated to $\bm{\nu}$, the unit normal pointing into $\Omega_{+}$, and hence the appearance of a minus sign on the last term of the right-hand side.  Employing $\div \bm{v} = \HH$ and the divergence theorem on $\Omega_{+}(t)$ we find that
\begin{equation}\label{Reynolds2}
\begin{aligned}
\int_{0}^{T} \int_{\Gamma(t)} 2 (-\velo + \bm{v} \cdot \bm{\nu}) \zeta \dhaus \dt & = \int_{0}^{T} \int_{\Omega} -2 \chara_{\Omega_{+}(t)} (\pd_{t} \zeta + \bm{v} \cdot \nabla \zeta + \HH \zeta) \dx \dt \\
& \quad - \int_{\Omega} 2 \chara_{\Omega_{+}(0)} \zeta(\cdot, 0) \dx.
\end{aligned}
\end{equation}
Then, the derivation of \eqref{CHD:SIM:mu} and \eqref{CHD:SIM:theta} follows directly from \eqref{Reynolds2} and the equations involving $\mu$ and $\theta$ in \eqref{CHD:SIM}.  We point out that one has to use $\jump{\bm{v}} \cdot \bm{\nu} = 0$ in the derivation of \eqref{CHD:SIM:theta}.  The derivation of \eqref{CHD:SIM:div} is clear from testing $\div \bm{v} = \HH$ in $\Omega_{\pm}$ with an arbitrary test function $\zeta \in C^{1}(\overline{Q})$ and using $\jump{\bm{v}} \cdot \bm{\nu} = 0$ on $\Gamma$.  Meanwhile, using the well-known result that the first variation of the area of $\Gamma$ in the direction of an arbitrary vector field $\bm{Y} \in C^{1}(\Omega; \R^{d})$ is 
\begin{align*}
\frac{\delta}{\delta \Gamma} \mathrm{Area}(\Gamma)[\bm{Y}] = \int_{\Gamma} \div_{\Gamma} \bm{Y} \dhaus = \int_{\Gamma} - \kappa \bm{Y} \cdot \bm{\nu} \dhaus,
\end{align*}
where $\div_{\Gamma}$ denotes the surface divergence and the above surface integration-by-parts formula differs to the classical formulation by a sign since $\bm{\nu}$ points into $\Omega_{+}$, we obtain for the relation $2 (\mu + \chi \theta) = \kappa$ the weak formulation
\begin{align*}
\int_{\Omega_{+}(t)} 2 \div \left ( \left ( \mu + \chi \theta \right ) \bm{Y} \right ) \dx & = -\int_{\Gamma(t)} 2 (\mu + \chi \theta) \bm{\nu} \cdot \bm{Y} \dhaus \\
& = - \int_{\Gamma(t)} \kappa \bm{\nu} \cdot \bm{Y} \dhaus = \frac{\delta}{\delta \Gamma} \mathrm{Area}(\Gamma)[\bm{Y}].
\end{align*}
This motivates \eqref{CHD:SIM:kappa}.  Lastly testing the Darcy law $\K \bm{v} = - \nabla p$ in $\Omega_{\pm}$ with an arbitrary $\bm{Y} \in C^{1}(\overline{\Omega};\R^{d})$ yields
\begin{align*}
\int_{\Omega} \K \bm{v} \cdot \bm{Y} - p \div \bm{Y} \dx & = -\int_{\Gamma} \jump{p}\bm{\nu} \cdot \bm{Y} \dhaus = -\int_{\Gamma} 2(\mu + \chi \theta) \bm{\nu} \cdot \bm{Y} \dhaus \\
& = - \int_{\Omega} 2 \chara_{\Omega_{+}} \div \left ( \left ( \mu + \chi \theta \right ) \bm{Y} \right ) \dx,
\end{align*}
which motivates \eqref{CHD:SIM:Darcy}.

\section{Uniform estimates}\label{sec:Est}
In this section, the symbol $C$ will denote constants that are independent of $\eps$ and may vary from line to line.  
\subsection{First estimate}\label{sec:FirstEst}
Let 
\begin{align}\label{defn:eps0}
\eps_{0} := \min \left (1, k_{0} / \chi^{2} \right ),
\end{align}
where $k_{0}$ is the constant in \eqref{Psi:lowerbound}.  By \eqref{Psi:lowerbound} it holds that
\begin{equation}\label{varphiL2:Psi:L1}
\begin{aligned}
\norm{\varphi}_{L^{2}}^{2} & \leq \int_{\{\abs{\varphi} \leq 1-c_{0}\}} (1-c_{0})^{2} \dx + \int_{\{\abs{\varphi} > 1-c_{0}\}} k_{0}^{-1} \Psi(\varphi) + k_{1} \dx  \\
& \leq k_{0}^{-1} \norm{\Psi(\varphi)}_{L^{1}} + C.
\end{aligned}
\end{equation}
For $\eps \in (0,\eps_{0}]$, we have $\frac{1}{2 \eps} \geq \frac{\chi^{2}}{2k_{0}}$, and so together with Young's inequality, \eqref{varphiL2:Psi:L1} yields a lower bound for the energy $\mathcal{E}$ defined in \eqref{CHD:energy}:
\begin{equation}\label{CH:energy:density:LB}
\begin{aligned}
\EE(\e{\varphi}(t), \e{\theta}(t)) & \geq \int_{\Omega} \left ( \frac{1}{\eps} - \frac{\chi^{2}}{2 k_{0}} \right ) \Psi(\e{\varphi}(t)) + \frac{\eps}{2} \abs{\nabla \e{\varphi}(t)}^{2} + \frac{1}{2} \abs{\e{\theta}(t)}^{2} \dx - C \\
& \geq \int_{\Omega} \frac{1}{2 \eps} \Psi(\e{\varphi}(t)) + \frac{\eps}{2} \abs{\nabla \e{\varphi}(t)}^{2} + \frac{1}{2}\abs{\e{\theta}(t)}^{2} \dx - C \quad \forall \eps \in (0,\eps_{0}].
\end{aligned}
\end{equation}
We now consider the energy identity \eqref{CHD:energy:id} and estimate the right-hand side as follows: By \eqref{ass:source} and the Poincar\'{e} inequality, we see that
\begin{equation}\label{RHS:ZYmu:1}
\begin{aligned}
\int_{\Omega} \U \e{\varphi} \e{\mu} + \So \e{\theta} \dx & = \int_{\Omega} \U \e{\varphi} (\e{\mu} - \mean{\e{\mu}}) + \So \e{\theta} \dx + \mean{\e{\mu}} \int_{\Omega} \U \e{\varphi} \dx \\
& \leq C \norm{\U}_{L^{\infty}} \norm{\e{\varphi}}_{L^{2}} \norm{\nabla \e{\mu}}_{L^{2}} + \norm{\So}_{L^{2}} \norm{\e{\theta}}_{L^{2}} + \mean{\e{\mu}} \int_{\Omega} \U \e{\varphi} \dx .
\end{aligned}
\end{equation}
To handle the last term involving the mean $\mean{\e{\mu}}$ we adapt the proof of \cite[Lem. 3.4]{Chen96} as follows:  Let $\psi$ be a $C^{2}(\Omega)$-function with $\pdnu \psi = 0$ on $\pd \Omega$, and denote
\begin{align}\label{e:GL}
\e{e} := \frac{1}{\eps} \Psi(\e{\varphi}) + \frac{\eps}{2} \abs{\nabla \e{\varphi}}^{2}, \quad \e{A} := \e{e} - \eps \nabla \e{\varphi} \otimes \nabla \e{\varphi}.
\end{align}
Then, we obtain after multiplying \eqref{CHD:mu} with $\nabla \psi \cdot \nabla \e{\varphi}$, integrating over $\Omega$ and integrating by parts 
\begin{align*}
& \int_{\Omega} \e{\mu} \nabla \psi \cdot \nabla \e{\varphi} \dx = \int_{\Omega} \left ( \frac{1}{\eps} \Psi'(\e{\varphi}) -\eps \Laplace \e{\varphi} - \chi \e{\theta} - \chi^{2} \e{\varphi} \right ) \nabla \psi \cdot \nabla \e{\varphi} \dx \\
& \;  = \int_{\Omega} \nabla \e{e} \cdot \nabla \psi - \eps \div (\nabla \e{\varphi} \otimes \nabla \e{\varphi}) \cdot \nabla \psi - \chi (\e{\theta} + \chi \e{\varphi}) \nabla \psi \cdot \nabla \e{\varphi} \dx \\
& \; = \int_{\Omega} - \e{A} \id : D^{2} \psi + \chi \left ( \nabla \e{\theta} \cdot \nabla \psi + \left ( \e{\theta} + \frac{\chi}{2} \e{\varphi} \right ) \Laplace \psi \right ) \e{\varphi} \dx, 
\end{align*}
where $\id$ denotes the identity tensor, $D^{2}\psi$ denotes the Hessian of $\psi$ and we used the relation
\begin{align*}
\frac{1}{2} \nabla \left ( \abs{\nabla f}^{2} \right ) = \div (\nabla f \otimes \nabla f) - \Laplace f \; \nabla f.
\end{align*}
Meanwhile, on the left-hand side we have
\begin{align*}
\int_{\Omega} \e{\mu} \nabla \psi \cdot \nabla \e{\varphi} \dx = -\int_{\Omega} \e{\varphi} \nabla \e{\mu} \cdot \nabla \psi + (\e{\mu} - \mean{\e{\mu}}) \e{\varphi} \Laplace \psi \dx - \mean{\e{\mu}} \int_{\Omega} \e{\varphi} \Laplace \psi \dx,
\end{align*}
so that upon combining we obtain
\begin{equation}\label{RHS:mean:mu}
\begin{aligned}
\mean{\e{\mu}} \int_{\Omega} \e{\varphi} \Laplace \psi \dx & = \int_{\Omega} D^{2} \psi : \e{A} \id  - \tfrac{1}{2} \chi^{2}  \abs{\e{\varphi}}^{2} \Laplace \psi \dx \\
& \quad - \int_{\Omega} (\nabla (\e{\mu} + \chi \e{\theta}) \cdot \nabla \psi + (\e{\mu} - \mean{\e{\mu}} + \e{\theta}) \Laplace \psi ) \, \e{\varphi} \dx \\
& \leq C \left ( 1 + \norm{\psi}_{C^{2}(\Omega)}^{2} \right ) \left ( \int_{\Omega} \e{e} + \abs{\e{\varphi}}^{2} + \abs{\e{\theta}}^{2} \dx \right ) \\
& \quad + \frac{1}{8}\norm{m^{\frac{1}{2}}(\e{\varphi}) \nabla \e{\mu}}_{L^{2}}^{2} + \frac{1}{4}\norm{n^{\frac{1}{2}}(\e{\varphi}) \nabla \e{\theta}}_{L^{2}}^{2},
\end{aligned}
\end{equation}
where we have used the boundedness and positivity of the mobilities $m(\cdot)$ and $n(\cdot)$, i.e., $\norm{\nabla \e{\mu}}_{L^{2}} \leq \frac{1}{\sqrt{m_{0}}} \norm{m^{\frac{1}{2}}(\e{\varphi}) \nabla \e{\mu}}_{L^{2}}$.  Choosing $\psi$ to be the unique solution to
\begin{align*}
-\Laplace \psi = -\U \text{ in } \Omega, \quad \pdnu \psi = 0 \text{ on } \pd \Omega \quad  \text{ with } \mean{\psi} = 0,
\end{align*}
which is possible as $\U$ has zero spatial mean, then the left-hand side of \eqref{RHS:mean:mu} reads as $\mean{\e{\mu}} \int_{\Omega} \U \e{\varphi} \dx$, while by the assumption \eqref{ass:source} that $\U \in C^{0}(\overline{\Omega})$ and classical elliptic theory yields $\psi \in C^{2}(\Omega) \cap C^{0}(\overline{\Omega})$ satisfying $\norm{\psi}_{C^{2}(\Omega)} \leq C \norm{\U}_{C^{0}(\Omega)} \leq C$.  Thus, substituting the above estimate to the right-hand side of \eqref{RHS:ZYmu:1}, and integrating in time from $0$ to $t$, we arrive at
\begin{equation}\label{RHS:ZYmu:2}
\begin{aligned}
\int_{Q_{t}} \U \e{\varphi} \e{\mu} + \So \e{\theta} \dx \ds & \leq C \left ( \int_{Q_{t}} \e{e} + \abs{\e{\varphi}}^{2} + \abs{\e{\theta}}^{2} \dx \ds \right ) + C \norm{\So}_{L^{2}(Q)}^{2}\\
& \quad + \frac{2}{8}\norm{m^{\frac{1}{2}}(\e{\varphi}) \nabla \e{\mu}}_{L^{2}(Q_{t})}^{2} + \frac{1}{4} \norm{n^{\frac{1}{2}}(\e{\varphi}) \nabla \e{\theta}}_{L^{2}(Q_{t})}^{2}.
\end{aligned}
\end{equation}
To estimate the source term involving $\HH \in L^{\infty}(0,T;L^{\infty}_{0}(\Omega))$, we employ the splitting (cf. \cite{JWZ})
\begin{equation}\label{RHS:HH}
\begin{aligned}
& \int_{Q_{t}} \HH \left (\e{p} - \e{\mu} \e{\varphi} - \tfrac{1}{2} \abs{\e{\theta} + \chi \e{\varphi}}^{2} \right ) \dx \ds \\
& \quad  = \int_{Q_{t}} \HH (\e{p} - \overline{\e{\mu}} \e{\varphi}) \dx \dt + \int_{Q_{t}} \HH \left ( \e{\varphi} (\mean{\e{\mu}} - \e{\mu}) - \tfrac{1}{2} \abs{\e{\theta} + \chi \e{\varphi}}^{2} \right ) \dx \ds \\
& \quad \leq \int_{Q_{t}} \HH (\e{p} - \overline{\e{\mu}} \e{\varphi}) \dx \ds +  \frac{1}{8} \norm{m^{\frac{1}{2}}(\e{\varphi})\nabla \e{\mu}}_{L^{2}(Q_{t})}^{2} +  C \norm{\e{\theta}}_{L^{2}(Q_{t})}^{2} + C\norm{\e{\varphi}}_{L^{2}(Q_{t})}^{2},
\end{aligned}
\end{equation}
Introducing the operator $\NN: L^{2}_{0} \to H^2_n \cap L^{2}_{0}$ as $\NN(f) = u$ where $u$ is the unique solution to 
\begin{align*}
-\Laplace u = f \text{ in } \Omega, \quad \pdnu u = 0 \text{ on } \pd \Omega \quad \text{ with } \mean{u} = 0.
\end{align*}
By the Lax--Milgram theorem and elliptic regularity the solution $u$ lies in $H^2_n\cap L^{2}_{0}$ with $\norm{u}_{H^{2}} \leq C \norm{f}_{L^{2}}$.  Then, it is easy to check that $\NN(-\mean{\e{\mu}} \Laplace (\e{\varphi} - \mean{\e{\varphi}})) = \mean{\e{\mu}} (\e{\varphi} - \mean{\e{\varphi}})$ and so
\begin{align*}
 \e{p} &= \NN \left ( \K \HH - \div \left [ \left ( \e{\mu} - \mean{\e{\mu}} + \chi \e{\theta} + \chi^{2} \e{\varphi} \right ) \nabla \e{\varphi} \right ] - \mean{\e{\mu}} \Laplace (\e{\varphi} - \mean{\e{\varphi}}) \right ) \\
 & = \NN \left ( \K \HH - \div \left [ \left ( \e{\mu} - \mean{\e{\mu}} + \chi \e{\theta} + \chi^{2} \e{\varphi} \right ) \nabla \e{\varphi} \right ] \right ) + \mean{\e{\mu}} (\e{\varphi} - \mean{\e{\varphi}}) \\
 &=: \e{z} + \mean{\e{\mu}} (\e{\varphi} - \mean{\e{\varphi}}).
\end{align*}
By the definition of the variable $\e{z}$, for any $\zeta \in H^2_n(\Omega)$, we obtain by integrating by parts and the homogeneous Neumann boundary conditions
\begin{align*}
& \int_{\Omega} \nabla \e{z} \cdot \nabla \zeta \dx = \int_{\Omega} \K \HH \zeta + (\e{\mu} - \mean{\e{\mu}} + \chi \e{\theta} + \chi^{2} \e{\varphi} ) \nabla \e{\varphi} \cdot \nabla \zeta \dx \\
& \quad = \int_{\Omega} \K \HH \zeta - \e{\varphi} \nabla (\e{\mu} - \mean{\e{\mu}} + \chi \e{\theta}) \cdot \nabla \zeta - \left ( \e{\varphi} (\e{\mu} - \mean{\e{\mu}} + \chi \e{\theta}) + \tfrac{1}{2} \chi^{2} \abs{\e{\varphi}}^{2} \right ) \Laplace \zeta \dx.
\end{align*}
Substituting $\zeta = \NN(\e{z})$ so that the left-hand side becomes $\norm{\e{z}}_{L^{2}}^{2}$, and by the elliptic estimate $\norm{\NN(\e{z})}_{H^{2}} \leq C \norm{\e{z}}_{L^{2}}$, this leads to
\begin{align*}
\norm{\e{z}}_{L^{2}}^{2} & \leq \K \norm{\HH}_{L^{2}} \norm{\NN(\e{z})}_{L^{2}} + \norm{\e{\varphi}}_{L^{3}} \norm{\nabla (\e{\mu} - \mean{\e{\mu}} + \chi \e{\theta})}_{L^{2}} \norm{\nabla \NN(\e{z})}_{L^{6}} \\
& \quad + \left ( \norm{\e{\varphi}}_{L^{3}} \norm{\e{\mu} - \mean{\e{\mu}} + \chi \e{\theta}}_{L^{6}} + C\norm{\e{\varphi}}_{L^{4}}^{2} \right ) \norm{\Laplace \NN(\e{z})}_{L^{2}} \\
& \leq C \norm{\e{z}}_{L^{2}} \left ( \K \norm{\HH}_{L^{2}} + \norm{\e{\varphi}}_{L^{3}} \left ( \norm{\nabla \e{\mu}}_{L^{2}} + \chi \norm{\e{\theta}}_{H^{1}} \right ) + \norm{\e{\varphi}}_{L^{4}}^{2} \right ),
\end{align*}
where we have used the Poincar\'{e} inequality to deduce $\norm{\e{\mu} - \mean{\e{\mu}}}_{L^{6}} \leq C \norm{\e{\mu} - \mean{\e{\mu}}}_{H^{1}} \leq C  \norm{\nabla \e{\mu}}_{L^{2}}$.  Then, using $\int_{\Omega} \HH \dx = 0$, we find
\begin{align*}
& \int_{Q_{t}} \HH (\e{p} - \mean{\e{\mu}} \e{\varphi}) \dx \ds = \int_{Q_{t}} \HH (\e{z} + \mean{\e{\mu}} \; \mean{\e{\varphi}}) \dx \ds = \int_{Q_{t}} \HH \e{z} \dx \ds \\
& \quad  \leq C \K \norm{\HH}_{L^{2}(Q)}^{2} + C \int_{0}^{t} \norm{\e{\varphi}}_{L^{3}} \left ( \norm{\nabla \e{\mu}}_{L^{2}} + \norm{\e{\theta}}_{H^{1}} \right ) + \norm{\e{\varphi}}_{L^{4}}^{2} \ds \\
& \quad \leq \frac{1}{8} \norm{m^{\frac{1}{2}}(\e{\varphi}) \nabla \e{\mu}}_{L^{2}(Q_{t})}^{2} + \frac{1}{4} \norm{n^{\frac{1}{2}}(\e{\varphi}) \nabla \e{\theta}}_{L^{2}(Q_{t})}^{2} \\
& \qquad + C \left (1 + \int_{0}^{t} \norm{\e{\theta}}_{L^{2}}^{2} + \norm{\e{\varphi}}_{L^{4}}^{2} \ds \right ).
\end{align*}
Substituting the above estimate into \eqref{RHS:HH} we infer for the source term involving $\HH$ in the energy identity \eqref{CHD:energy:id} the following estimate
\begin{equation}\label{RHS:HH:Est}
\begin{aligned}
& \int_{Q_{t}} \HH \left (\e{p} - \e{\mu} \e{\varphi} - \tfrac{1}{2} \abs{\e{\theta} + \chi \e{\varphi}}^{2} \right ) \dx \ds \\
& \quad \leq C + \frac{1}{4} \norm{m^{\frac{1}{2}}(\e{\varphi}) \nabla \e{\mu}}_{L^{2}(Q_{t})}^{2} + \frac{1}{4} \norm{n^{\frac{1}{2}}(\e{\varphi}) \nabla \e{\theta}}_{L^{2}(Q_{t})}^{2} \\
& \qquad + C \int_{Q_{t}} \Psi(\e{\varphi}) + \abs{\e{\theta}}^{2} \dx \ds,
\end{aligned}
\end{equation}
where we employed \eqref{Psi:lowerbound} and a similar calculation to \eqref{varphiL2:Psi:L1} to deduce
\begin{align*}
\norm{\e{\varphi}}_{L^{4}}^{2} \leq C + \int_{\Omega} \abs{\e{\varphi}}^{q} \dx \leq C \left ( 1 + \int_{\Omega} \Psi(\e{\varphi}) \dx \right ).
\end{align*}
In particular, we note that this motivates the assumption $q \geq 4$ in \eqref{ass:Psi}.  We are now in a position to derive the first uniform estimate.  Let us define the quantity 
\begin{align}\label{defn:GG}
\GG(\e{\varphi}, \e{\theta}) := \frac{1}{2 \eps} \norm{\Psi(\e{\varphi})}_{L^{1}} + \frac{\eps}{2} \norm{\nabla \e{\varphi}}_{L^{2}}^{2} + \norm{\e{\theta}}_{L^{2}}^{2}.
\end{align}
Then, using \eqref{CH:energy:density:LB}, the estimation on the source terms \eqref{RHS:ZYmu:2} and \eqref{RHS:HH:Est}, the boundedness of the initial energy \eqref{ass:initial} and the estimate \eqref{varphiL2:Psi:L1}, we obtain from \eqref{CHD:energy:id} the following integral inequality
\begin{align*}
& \GG(\e{\varphi}(t), \e{\theta}(t)) + \frac{1}{2} \norm{m^{\frac{1}{2}}(\e{\varphi}) \nabla \e{\mu}}_{L^{2}(Q_{t})}^{2} + \frac{1}{2} \norm{n^{\frac{1}{2}}(\e{\varphi}) \nabla \e{\theta}}_{L^{2}(Q_{t})}^{2} + \K \norm{\e{\bm{v}}}_{L^{2}(Q_{t})}^{2} \\
& \quad \leq C + C \int_{0}^{t} \GG(\e{\varphi}(s), \e{\theta}(s)) \ds.
\end{align*}
By virtue of Gronwall's inequality in integral form \cite[Lem. 3.1]{GLNeu} we infer the uniform estimate
\begin{equation}\label{CHD:unif:est:1}
\begin{aligned}
& \frac{1}{\eps} \norm{\Psi(\e{\varphi})}_{L^{\infty}(0,T;L^{1})} + \frac{\eps}{2} \norm{\nabla \e{\varphi}}_{L^{\infty}(0,T;L^{2})}^{2} + \norm{\e{\theta}}_{L^{\infty}(0,T;L^{2})}^{2} \\
& \quad + \norm{m^{\frac{1}{2}}(\e{\varphi}) \nabla \e{\mu}}_{L^{2}(Q)}^{2} + \norm{n^{\frac{1}{2}}(\e{\varphi}) \nabla \e{\theta}}_{L^{2}(Q)}^{2} + \norm{\e{\bm{v}}}_{L^{2}(Q)}^{2} \leq C,
\end{aligned}
\end{equation}
for all $\eps \in (0,\eps_{0}]$, where $\eps_{0}$ is defined in \eqref{defn:eps0}.

\subsection{Second estimates}
Building on \eqref{CHD:unif:est:1} we now derive subsequent uniform estimates.  There exists a positive constant $C_{2}$ such that the Modica--Mortola ansatz function $\e{w} = W(\e{\varphi})$, where $W$ is the bijective function defined in Lemma \ref{lem:ModicaAnsatz:BijW}, satisfies
\begin{align}
\label{CHD:unif:est:2}
\norm{\e{w}}_{L^{\infty}(0,T;W^{1,1})} \leq C_{2}.
\end{align}
Indeed, by \eqref{varphiL2:Psi:L1}, \eqref{CHD:unif:est:1}, and the second inequality of \eqref{Wfunctiondiff}, we infer that
\begin{align*}
\norm{\nabla \e{w}(t)}_{L^{1}} & \leq \int_{\Omega} \frac{1}{\eps} \Psi(\e{\varphi}(t)) + \frac{\eps}{2} \abs{\nabla \e{\varphi}(t)}^{2} \dx \leq C, \\
\norm{\e{w}(t)}_{L^{1}} & = \norm{W(\e{\varphi}(t)) - W(-1)}_{L^{1}} \leq C \int_{\Omega} \abs{\e{\varphi}(t) + 1} (2 + \abs{\e{\varphi}(t)}) \dx \\
& \leq C \left ( 1 + \norm{\e{\varphi}(t)}_{L^{2}}^{2} \right ) \leq C,
\end{align*}
for a.e.~$t \in [0,T]$.  As a further consequence of \eqref{CHD:unif:est:1}, \eqref{Psi:lowerbound} and \eqref{quadraticbound}, we obtain
\begin{align}
\label{CHD:unif:est:3}
\norm{\e{\varphi}(t)}_{L^{q}} \leq C, \quad \norm{\abs{\e{\varphi}(t)}-1}_{L^{2}} \leq C \eps^{\frac{1}{2}} \quad \forall t \in (0,T].
\end{align}

\subsection{H\"{o}lder-in-time uniform estimates}
For any $y := \frac{2q}{q-2} \in (2,4]$, we obtain from \eqref{CHD:varphi} that
\begin{align*}
\norm{\pd_{t} \e{\varphi}}_{L^{2}(0,T;(W^{1,y})')} & \leq \norm{\e{\varphi}}_{L^{\infty}(0,T;L^{q})} \norm{\e{\bm{v}}}_{L^{2}(Q)} + C \left (\norm{\nabla \e{\mu}}_{L^{2}(Q)} + \norm{\e{\varphi}}_{L^{2}(Q)}  \right ).
\end{align*}
Using that $\e{\theta}$ is bounded in $L^{\infty}(0,T;L^{2}(\Omega)) \cap L^{2}(0,T;H^{1}(\Omega)) \subset L^{\frac{10}{3}}(Q)$ and $\e{\varphi}$ is bounded in $L^{\infty}(0,T;L^{q}(\Omega)) \subset L^{\frac{10}{3}}(Q)$ in three spatial dimensions, from \eqref{CHD:theta} we infer that
\begin{align*}
\norm{\pd_{t} \e{\theta}}_{L^{\frac{5}{4}}(0,T;(W^{1,5})')} & \leq \left ( \norm{\e{\theta}}_{L^{\frac{10}{3}}(Q)} + \chi \norm{\e{\varphi}}_{L^{\frac{10}{3}}(Q)} \right )\norm{\e{\bm{v}}}_{L^{2}(Q)}  \\
& \quad + C \left ( \norm{\pd_{t} \e{\varphi}}_{L^{2}(0,T;(W^{1,y})')} + \norm{\nabla \e{\theta}}_{L^{2}(Q)} + \norm{\So}_{L^{2}(Q)} \right ), 
\end{align*}
and altogether this implies for any $ y = \frac{2q}{q-2} \in (2,4]$,
\begin{align}\label{CHD:unif:est:4}
\norm{\pd_{t} \e{\theta}}_{L^{\frac{5}{4}}(0,T;(W^{1,5})')} + \norm{\pd_{t} \e{\varphi}}_{L^{2}(0,T;(W^{1,y})')} \leq C.
\end{align}
We now use the above estimates to obtain H\"{o}lder-in-time bounds for $\e{\varphi}$ and $\e{w}$.  By virtue of the absolute continuity of Banach space-valued $H^{1}(0,T)$ functions \cite[Lem.~2.2.1, p.~44]{Droniou}, we consider \eqref{CHD:varphi} as the following equality in $(W^{1,y})'$: for any $0 \leq \tau < t \leq T$,
\begin{align}\label{Holder:1}
\e{\varphi}(t) - \e{\varphi}(\tau) = \int_{\tau}^{t} \div (m(\e{\varphi}) \nabla \e{\mu}) - \div (\e{\varphi} \e{\bm{v}}) + \U \e{\varphi} \ds.
\end{align}
Let $\phi \in C^{\infty}(\R^{d})$ satisfy $0 \leq \phi \leq 1$ in $\R^{d}$, $\phi = 0$ in $\R^{d} \setminus B_{1}$, where $B_{1}$ is the ball centered at the origin with radius $1$ and $\int_{\R^{d}} \phi \dx = 1$.  Let $\eta_{0}$ denote a small positive number, and for any $\eta \in (0, \eta_{0})$, let $\e{\varphi}_{\eta}$ denote the mollification of $\e{\varphi}$ defined as
\begin{align}\label{mollifier}
\e{\varphi}_{\eta}(x,t) = \int_{B_{1}} \phi(y) \e{\varphi}(x-\eta y, t) \dy = - \eta^{-d} \int_{\R^{d}} \phi \left (\tfrac{(x-z)}{\eta} \right ) \e{\varphi}(z,t) \dz
\end{align}
for $x \in \Omega$, $t \geq 0$ and $\eta \in (0,\eta_{0}]$ and $\e{\varphi}$ has been extended in the exterior neighbourhood $\{x \notin \Omega : \mathrm{dist}(x,\pd \Omega) < \eta \}$ via $\e{\varphi}(x+\eta \bm{\nu}(x),t) = \e{\varphi}(x - \eta \bm{\nu}(x),t)$ for $x \in \pd \Omega$, $\eta \in (0,\eta_{0}]$ and $t \geq 0$.  Then, keeping in mind the following standard properties of mollifiers:
\begin{align}
\label{mollifiers:prop}
\norm{f_{\eta}}_{L^{r}} \leq \norm{f}_{L^{r}} , \quad \norm{\nabla f_{\eta}}_{L^{r}} \leq C \eta^{-1} \norm{f}_{L^{r}} \quad \text{ for any } 1 \leq r < \infty,
\end{align}
we obtain from testing \eqref{Holder:1} with the function $X_{\eta} := \e{\varphi}_{\eta}(t) - \e{\varphi}_{\eta}(\tau)$ 
\begin{align*}
\int_{\Omega} (\e{\varphi}(t) - \e{\varphi}(\tau)) X_{\eta} \dx & = \int_{\Omega} - \nabla X_{\eta} \cdot \left ( \int_{\tau}^{t} -\e{\varphi} \e{\bm{v}} + m(\e{\varphi}) \nabla \e{\mu} \ds \right ) + X_{\eta} \left ( \int_{\tau}^{t} \U \e{\varphi} \ds \right ) \dx \\
& \leq \norm{\e{\varphi}}_{L^{\infty}(0,T;L^{q})} \norm{\e{\bm{v}}}_{L^{2}(Q)} \abs{t - \tau}^{1/2} \norm{\nabla X_{\eta}}_{L^{y}} \\
& \quad + m_{1} \norm{\nabla \e{\mu}}_{L^{2}(Q)} \abs{t-\tau}^{1/2} \norm{\nabla X_{\eta}}_{L^{2}} \\
& \quad + \norm{\U \e{\varphi} }_{L^{2}(Q)} \abs{t - \tau}^{1/2} \norm{X_{\eta}}_{L^{2}}.
\end{align*}
Using \eqref{mollifiers:prop} and the uniform estimate \eqref{CHD:unif:est:3}, we see that
\begin{align*}
\norm{X_{\eta}}_{L^{2}} & \leq \norm{\e{\varphi}(t) - \e{\varphi}(\tau)}_{L^{2}} \leq 2 \norm{\e{\varphi}}_{L^{\infty}(0,T;L^{2})} \leq C, \\
\norm{\nabla X_{\eta}}_{L^{y}} & \leq C \eta^{-1} \norm{\e{\varphi}}_{L^{\infty}(0,T;L^{y})} \leq C \eta^{-1},
\end{align*}
and so
\begin{align*}
\int_{\Omega} (\e{\varphi}(t) - \e{\varphi}(\tau)) X_{\eta} \dx  \leq C \abs{t-\tau}^{1/2} \left ( 1 + \eta^{-1} \right ).
\end{align*}
Together with the following estimate \cite[(3.2)-(3.4)]{Chen96}
\begin{align}\label{mollifier:diff}
\norm{\e{\varphi}_{\eta}(t) - \e{\varphi}(t)}_{L^{2}}^{2} \leq C \norm{\nabla \e{w}(t)}_{L^{1}} \leq C \eta \quad \forall t \in [0,T],
\end{align}
we have
\begin{align*}
\norm{\e{\varphi}(t) - \e{\varphi}(\tau)}_{L^{2}}^{2} \leq C \left ( \eta + \abs{t-\tau}^{1/2}(1+\eta^{-1}) \right ).
\end{align*}
Dividing through by $\abs{t-\tau}^{\beta}$ for some $\beta > 0$ and choosing $\eta = \min(\eta_{0},\abs{t-\tau}^{\gamma})$ for some $\gamma > 0$.  Then, the resulting right-hand side now reads as $C ( \abs{t-\tau}^{\gamma-\beta} + \abs{t-\tau}^{1/2-\beta} + \abs{t-\tau}^{1/2-\gamma-\beta} )$, and for this to be bounded for any $0 \leq \tau < t \leq T$, we require $\frac{1}{2} \geq \beta + \gamma$, $\gamma \geq \beta$.  Choosing $\beta = \gamma = \frac{1}{4}$ and taking the supremum over $0 \leq \tau < t \leq T$ we infer the estimate
\begin{align}\label{CHD:unif:est:5}
\sup_{0 \leq \tau < t \leq T} \frac{\norm{\e{\varphi}(t) - \e{\varphi}(\tau)}_{L^{2}}^{2}}{\abs{t - \tau}^{1/4}} \leq C.
\end{align}
Following the arguments in the proof of \cite[Lem. 3.2]{Chen96} we obtain an analogous estimate for $\e{w}$:
\begin{align}\label{CHD:unif:est:6}
\sup_{0 \leq \tau < t \leq T} \frac{\norm{\e{w}(t) - \e{w}(\tau)}_{L^{1}}}{\abs{t-\tau}^{1/8}} \leq C.
\end{align}

\subsection{Uniform estimate for the spatial mean of $\e{\mu}$}
We now derive a uniform estimate for the mean value $\mean{\e{\mu}}$.  Observe from \eqref{CHD:varphi}, the mean value $\mean{\e{\varphi}}$ satisfies
\begin{align*}
\mean{\e{\varphi}}(t) = u_{0} + \int_{0}^{t} \abs{\Omega}^{-1} \int_{\Omega} \U \e{\varphi} \dx \ds,
\end{align*}
where $u_{0} \in (-1,1)$ is the mean value of the initial condition $\e{\varphi}_{0}$ that is  independent of $\eps$.  Recalling the mollifier $\e{\varphi}_{\eta}$, from the proof of \cite[Lem. 3.4]{Chen96} we infer the following estimate
\begin{align}
\label{mollifier:est}
\norm{\e{\varphi}_{\eta}}_{L^{\infty}} \leq 1 + C \eta^{-\frac{d}{2}} \norm{(\abs{\e{\varphi}}-1)}_{L^{2}} \leq 1 + C \eta^{-\frac{d}{2}} \eps^{\frac{1}{2}},
\end{align}
so that by \eqref{mollifier:diff},
\begin{equation}\label{mean:varphi:est}
\begin{aligned}
\abs{\mean{\e{\varphi}}(t)} & = \abs{u_{0} + \int_{0}^{t} \abs{\Omega}^{-1} \int_{\Omega} \U (\e{\varphi} - \e{\varphi}_{\eta}) + \U \e{\varphi}_{\eta} \dx \ds} \\
 & \leq \abs{u_{0}} + C\norm{\U}_{L^{\infty}} \norm{\e{\varphi} - \e{\varphi}_{\eta}}_{L^{2}(Q_{t})} + t \norm{\U}_{L^{\infty}} \norm{\e{\varphi}_{\eta}}_{L^{\infty}(0,T;L^{\infty}(\Omega))} \\
& \leq \abs{u_{0}} + C \left ( \eta^{\frac{1}{2}} + \eta^{-\frac{d}{2}} \eps^{\frac{1}{2}} \right ) + T \norm{\U}_{L^{\infty}}.
\end{aligned}
\end{equation}
Together with \eqref{cond:T}, this in turn implies that there exists $\eps_{1} > 0$ such that for all $\eps \in (0, \min (\eps_{0},\eps_{1})]$,
\begin{align*}
\int_{\Omega} (\e{\varphi}_{\eta} - \mean{\e{\varphi}_{\eta}}) \e{\varphi} \dx & = \int_{\Omega} (\e{\varphi}_{\eta} - \e{\varphi})\e{\varphi} + (\e{\varphi})^{2} - 1 \dx + \abs{\Omega} (1 - (\mean{\e{\varphi}})^{2}) + \abs{\Omega} \mean{\e{\varphi}} (\mean{\e{\varphi}} - \mean{\e{\varphi}_{\eta}}) \\
& \geq \abs{\Omega} (1 - (\mean{\e{\varphi}})^{2}) - C \norm{\e{\varphi}_{\eta} - \e{\varphi}}_{L^{2}} \norm{\e{\varphi}}_{L^{2}} - \norm{\abs{\e{\varphi}}^{2} - 1}_{L^{1}} \\
& \geq  \abs{\Omega} (1 - (\mean{\e{\varphi}})^{2}) - C(\eta^{\frac{1}{2}} + \eps^{\frac{1}{2}}) \\
& \geq \abs{\Omega} (1 - (\abs{u_{0}} + T \norm{\U}_{L^{\infty}})^{2}) - C \left ( \eta^{\frac{1}{2}} + \eps^{\frac{1}{2}} (1 + \eta^{-\frac{d}{2}}) \right ) > 0,
\end{align*}
if we choose $\eta$ sufficiently small but independent of $\eps$.  In the above we have used \eqref{mollifier:diff} and \eqref{CHD:unif:est:3} to deduce that
\begin{align*}
\abs{\mean{\e{\varphi}} (\mean{\e{\varphi}} - \mean{\e{\varphi}_{\eta}})} \leq C \norm{\e{\varphi}}_{L^{2}} \norm{\e{\varphi} - \e{\varphi}_{\eta}}_{L^{2}} \leq C \eta^{\frac{1}{2}}, \\
\norm{\abs{\e{\varphi}}^{2} - 1}_{L^{1}} \leq \norm{\abs{\e{\varphi}+1} \abs{ \abs{\e{\varphi}}-1} }_{L^{1}} \leq C \norm{\abs{\abs{\e{\varphi}}-1}}_{L^{2}} \leq C \eps^{\frac{1}{2}}.
\end{align*}
Then, returning to the equality \eqref{RHS:mean:mu} and this time choose $\psi$ as the solution to
\begin{align*}
- \Laplace \psi = -(\e{\varphi}_{\eta} - \mean{\e{\varphi}_{\eta}}) \text{ in } \Omega, \quad \pdnu \psi = 0 \text{ on } \pd \Omega \quad \text{ with } \mean{\psi} = 0,
\end{align*}
as in the proof of \cite[Lem. 3.4]{Chen96}, with
\begin{align*}
\norm{\psi}_{C^{2}(\Omega)} \leq C \norm{\e{\varphi}_{\eta}}_{C^{1}(\Omega)} \leq C \eta^{-1} \left ( 1 + \eps^{\frac{1}{2}} \eta^{-\frac{d}{2}} \right ),
\end{align*}
we find that
\begin{align*}
\abs{\mean{\e{\mu}}(t)} \leq C \norm{\psi}_{C^{2}(\Omega)} \left ( 1 + \norm{\nabla \e{\mu}(t)}_{L^{2}} + \norm{\nabla \e{\theta}(t)}_{L^{2}} \right ),
\end{align*}
i.e., $\mean{\e{\mu}}$ is bounded in $L^{2}(0,T)$ and Poincar\'{e}'s inequality yields 
\begin{align}\label{CHD:unif:est:7}
\norm{\e{\mu}}_{L^{2}(Q)} \leq C.
\end{align}

\subsection{Uniform estimates for the pressure}

It remains to derive uniform estimates for the pressure.  From \eqref{Pressure:poisson}, the pressure $\e{p}$ satisfies
\begin{align*}
\int_{\Omega} \nabla \e{p} \cdot \nabla \zeta \dx = \int_{\Omega} \K \HH \zeta - (\e{\mu} + \chi \e{\theta} + \chi^{2} \e{\varphi}) \nabla \e{\varphi} \cdot \nabla \zeta \dx
\end{align*}
for all $\zeta \in H^{1}(\Omega)$.  Choose $\zeta = \NN(\e{p}) \in H^2_n \cap L^{2}_{0}$, integrating by parts and using the elliptic estimate $\norm{\NN(\e{p})}_{H^{2}} \leq C \norm{\e{p}}_{L^{2}}$ yields
\begin{equation}\label{pressure:est}
\begin{aligned}
\norm{\e{p}}_{L^{2}}^{2} & = \int_{\Omega} \K \HH \NN(\e{p}) + \e{\varphi} \nabla (\e{\mu} + \chi \e{\theta}) \cdot \nabla \NN(\e{p}) \dx \\
& \quad + \int_{\Omega} \e{\varphi} (\e{\mu} + \chi \e{\theta}) \Laplace \NN(\e{p}) + \tfrac{1}{2} \chi^{2} \abs{\e{\varphi}}^{2} \Laplace \NN(\e{p}) \dx \\
& \leq C \norm{\e{p}}_{L^{2}} \left ( 1 + \norm{\e{\varphi}}_{L^{3}} \left ( \norm{\e{\mu}}_{H^{1}} + \norm{\e{\theta}}_{H^{1}} \right ) + \norm{\e{\varphi}}_{L^{4}}^{2} \right ).
\end{aligned}
\end{equation}
Then, using \eqref{CHD:unif:est:1}, \eqref{CHD:unif:est:2} and \eqref{CHD:unif:est:6} we have
\begin{align}\label{CHD:unif:est:8}
\norm{\e{p}}_{L^{2}(Q)} \leq C.
\end{align}

\section{Compactness}\label{sec:Comp}
For each $\eps \in (0,\eps_{2}]$, where $\eps_{2} = \min(\eps_{0},\eps_{1})$, $\eps_{0}$ is defined in \eqref{defn:eps0} and $\eps_{1}$ is the constant from the derivation of \eqref{CHD:unif:est:7}, let $(\e{\varphi}, \e{\mu}, \e{\theta}, \e{\bm{v}}, \e{p})$ denote a solution to \eqref{CHD}.  Then, by the uniform estimates \eqref{CHD:unif:est:1}-\eqref{CHD:unif:est:4}, \eqref{CHD:unif:est:5}, \eqref{CHD:unif:est:6}, \eqref{CHD:unif:est:7}, \eqref{CHD:unif:est:8}, we immediately deduce the existence of functions $\theta, \mu \in L^{2}(0,T;H^{1})$, $\bm{v} \in L^{2}(Q)^{d}$ and $p \in L^{2}(0,T;L^{2}_{0})$ such that the convergence statements \eqref{mu:theta:L2H1}, \eqref{theta:LinftyL2}, \eqref{v:L2L2} and \eqref{p:L2L2} hold along a subsequence $\{\eps_{j}\}_{j \in \N}$ converging to zero.  We claim additionally that
\begin{align*}
\theta^{\eps_{j}} \longrightarrow \theta \text{ strongly in } C^{0}_{\mathrm{w}}([0,T];L^{2}(\Omega)) \cap L^{2}(Q),
\end{align*}
which will allow us to attain the initial condition for $\theta$.  The strong convergence in $L^{2}(Q)$ follows from the application of \cite[\S~8, Cor.~4]{Simon86} and the uniform boundedness in $L^{2}(0,T;H^{1}(\Omega)) \cap W^{1,\frac{5}{4}}(0,T;(W^{1,5}(\Omega))')$.  For the strong convergence in $C^{0}_{\mathrm{w}}([0,T];L^{2}(\Omega))$, we argue as in \cite[(3.119)]{Feireisl}.  Let $\phi \in C^{\infty}_{c}(\Omega)$ and define $f_{j}(t) : [0,T] \to \R$ by $f_{j}(t) := \int_{\Omega} \theta^{\eps_{j}}(t) \phi \dx$.  Boundedness of $\theta^{\eps_{j}}$ in $L^{\infty}(0,T;L^{2}(\Omega))$ implies that $f_{j}$ is bounded uniformly for every $t \in [0,T]$, and by the boundedness of $\pd_{t} \theta^{\eps_{j}}$ in $L^{\frac{5}{4}}(0,T;(W^{1,5}(\Omega))')$ one observes that
\begin{align*}
\abs{f_{j}(t) - f_{j}(\tau)} & \leq \abs{\biginner{\int_{\tau}^{t} \pd_{t} \theta^{\eps_{j}}(s) \ds}{\phi}_{W^{1,5}}} \leq \norm{\phi}_{W^{1,5}} \norm{\pd_{t} \theta^{\eps_{j}}}_{L^{\frac{5}{4}}(0,T;(W^{1,5})')} \abs{t - \tau}^{\frac{1}{5}} \\
& \leq C \abs{t - \tau}^{\frac{1}{5}},
\end{align*}
with a constant $C$ not depending on $j$.  Hence $\{f_{j}\}_{j}$ is also equicontinuous and by the Arzel\`{a}--Ascoli theorem
\begin{align*}
\int_{\Omega} \theta^{\eps_{j}}(t) \phi \dx \to \int_{\Omega} \theta(t) \phi \dx \text{ strongly in } C^{0}([0,T]) \quad \forall \phi \in C^{\infty}_{c}(\Omega).
\end{align*}
Using a density argument for $\phi \in L^{2}(\Omega)$ yields the desired assertion.  We now infer some compactness for $\{\varphi^{\eps_{j}}\}_{j \in \N}$ and $\{w^{\eps_{j}}\}_{j \in \N}$.

\begin{lem}\label{lem:CH:weakcompactness}
Let $0 < \alpha < \frac{1}{8}$.  There exists a measurable set $\Omega_{+} \subset Q$ of finite perimeter, i.e., $\chara_{\Omega_{+}(t)} \in L^{\infty}(0,T;BV(\Omega, \{0,1\}))$ such that
\begin{align*}
w^{\eps_{j}} \longrightarrow w := \chara_{\Omega_{+}} & \text{ weakly* in } L^{\infty}(0,T;W^{1,1}(\Omega)), \\
w^{\eps_{j}} \longrightarrow \chara_{\Omega_{+}} & \text{ strongly in } C^{0,\alpha}(0,T;L^{1}(\Omega)) \cap L^{1}(Q) \text{ and a.e. in } Q, \\
\varphi^{\eps_{j}} \longrightarrow \varphi := -1 + 2 \chara_{\Omega_{+}} & \text{ strongly in } C^{0,\alpha}([0,T];L^{2}(\Omega)) \cap L^{2}(Q) \text{ and a.e. in } Q.
\end{align*}
Furthermore, there exists a positive constant $C$ such that for all $0 \leq \tau < t \leq T$,
\begin{align*}
\int_{\Omega} \abs{\chara_{\Omega_{+}(t)} - \chara_{\Omega_{+}(\tau)}} \dx \leq C \abs{t - \tau}^{\frac{1}{4}}, \quad \norm{\nabla \chara_{\Omega_{+}(t)}}_{\M^{d}} = \abs{\nabla \chara_{\Omega_{+}(t)}}(\Omega) \leq C,
\end{align*}
that is, $\chara_{\Omega_{+}} \in L^{\infty}(0,T;BV(\Omega, \{0,1\})) \cap C^{0,\frac{1}{4}}([0,T];L^{1}(\Omega))$.
\end{lem}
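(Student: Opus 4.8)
The plan is to derive every asserted convergence from the uniform bounds \eqref{CHD:unif:est:1}--\eqref{CHD:unif:est:6} by first handling the Modica--Mortola ansatz $\e{w} = W(\e{\varphi})$, for which the relevant estimates are already in hand, and then transferring the conclusions back to $\e{\varphi}$ through the two-sided bound \eqref{Wfunctiondiff} of Lemma~\ref{lem:ModicaAnsatz:BijW}. First I would apply Lemma~\ref{AubinLionHolder} to $\{\e{w}\}$ with $B = L^{1}(\Omega)$ and $\alpha = \tfrac{1}{8}$. The hypotheses are checked as follows: by \eqref{CHD:unif:est:2} the sequence is bounded in $L^{\infty}(0,T;W^{1,1}(\Omega)) \subset L^{\infty}(0,T;L^{1}(\Omega))$; for $0 \leq t_{1} < t_{2} \leq T$ the time integral $\int_{t_{1}}^{t_{2}} \e{w}(t) \dt$ is bounded in $W^{1,1}(\Omega)$, hence relatively compact in $L^{1}(\Omega)$ by the compact embedding $W^{1,1}(\Omega) \hookrightarrow\hookrightarrow L^{1}(\Omega)$; and the uniform Hölder condition \eqref{uniformHolder} is precisely \eqref{CHD:unif:est:6}. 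Lemma~\ref{AubinLionHolder} then produces a limit $w$ and a subsequence such that $w^{\eps_{j}} \to w$ strongly in $C^{0,\alpha}([0,T];L^{1}(\Omega))$ for every $\alpha \in (0,\tfrac{1}{8})$, and in particular strongly in $L^{1}(Q)$ and a.e.\ in $Q$ along a further subsequence. The uniform bound \eqref{CHD:unif:est:2} combined with the lower semicontinuity of the total variation along this convergence identifies $w \in L^{\infty}(0,T;BV(\Omega))$, yields the weak* convergence in $L^{\infty}(0,T;W^{1,1}(\Omega))$, and gives the bound $\norm{\nabla \chara_{\Omega_{+}(t)}}_{\M^{d}} \leq C$.

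Next I would identify the limit as a characteristic function. Since \eqref{CHD:unif:est:3} gives $\norm{\abs{\e{\varphi}(t)} - 1}_{L^{2}} \leq C \eps^{1/2}$, we have $\abs{\e{\varphi}} \to 1$ in $L^{2}(Q)$, hence a.e.\ in $Q$ along a subsequence. Recalling $W(-1) = 0$ and $W(1) = \int_{-1}^{1}\sqrt{2\Psi(s)} \ds = 1$ (using $\tilde{\Psi} = \Psi$ on $[-1,1]$ by \eqref{Psi:basic}), the continuity of $W$ forces $\e{w} = W(\e{\varphi}) \to W(\pm 1) \in \{0,1\}$ a.e.\ in $Q$. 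Thus $w(x,t) \in \{0,1\}$ a.e., and setting $\Omega_{+} := \{ (x,t) \in Q : w(x,t) = 1 \}$ with time-slices $\Omega_{+}(t)$, we obtain $w = \chara_{\Omega_{+}}$ with $\chara_{\Omega_{+}(t)} \in BV(\Omega, \{0,1\})$.

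Finally I would transfer everything to $\e{\varphi}$. Because $W$ is bijective with $W^{-1}(0) = -1$ and $W^{-1}(1) = 1$, the candidate limit is $\varphi := W^{-1}(w) = -1 + 2\chara_{\Omega_{+}}$. Applying the first inequality in \eqref{Wfunctiondiff} pointwise with $y_{1} = \e{\varphi}(x,t)$, $y_{2} = \varphi(x,t)$ (so that $W(\varphi(t)) = w(t)$) and integrating over $\Omega$ gives
\begin{align*}
C_{1} \norm{\e{\varphi}(t) - \varphi(t)}_{L^{2}}^{2} \leq \int_{\Omega} \abs{W(\e{\varphi}(t)) - w(t)} \dx = \norm{\e{w}(t) - w(t)}_{L^{1}},
\end{align*}
whence $\sup_{t} \norm{\varphi^{\eps_{j}}(t) - \varphi(t)}_{L^{2}}^{2} \leq C_{1}^{-1} \sup_{t} \norm{w^{\eps_{j}}(t) - w(t)}_{L^{1}} \to 0$, i.e.\ $\varphi^{\eps_{j}} \to \varphi$ in $C^{0}([0,T];L^{2}(\Omega))$, and therefore in $L^{2}(Q)$ and a.e.\ in $Q$. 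This is upgraded to $C^{0,\alpha}([0,T];L^{2}(\Omega))$ for $\alpha \in (0,\tfrac{1}{8})$ by combining the $C^{0}$ convergence with the uniform Hölder bound \eqref{CHD:unif:est:5} via the same interpolation argument used to establish \eqref{Holdernorm:conv} in Lemma~\ref{AubinLionHolder}. Passing \eqref{CHD:unif:est:5} to the limit and using $\varphi \in \{-1,1\}$ a.e., so that $\abs{\varphi(t) - \varphi(\tau)} = \tfrac{1}{2} \abs{\varphi(t)-\varphi(\tau)}^{2}$, yields $\int_{\Omega} \abs{\chara_{\Omega_{+}(t)} - \chara_{\Omega_{+}(\tau)}} \dx = \tfrac{1}{4} \norm{\varphi(t)-\varphi(\tau)}_{L^{2}}^{2} \leq C \abs{t-\tau}^{1/4}$, giving the $C^{0,1/4}([0,T];L^{1}(\Omega))$ bound.

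I expect the main obstacle to be the simultaneous identification of $w$ as a characteristic function and the recovery of the strong $L^{2}$ convergence of $\e{\varphi}$: both rest on the two-sided bound \eqref{Wfunctiondiff} and on the fact that the energy-type estimate \eqref{CHD:unif:est:3} forces $\abs{\e{\varphi}} \to 1$, so that the limit of $W(\e{\varphi})$ can only be $0$ or $1$. A secondary point requiring care is the non-reflexivity of $W^{1,1}(\Omega)$, which is why the weak* convergence is phrased through the uniform $BV$ bound and the lower semicontinuity of the total variation rather than by direct extraction.
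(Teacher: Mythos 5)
Your argument is correct and follows the paper's overall skeleton: Lemma~\ref{AubinLionHolder} applied to $\{\e{w}\}$ using \eqref{CHD:unif:est:2}, \eqref{CHD:unif:est:6} and the compact embedding $W^{1,1}(\Omega) \subset \subset L^{1}(\Omega)$; identification of the limit $w$ as a characteristic function; transfer to $\e{\varphi}$ via the lower bound in \eqref{Wfunctiondiff}; lower semicontinuity of the total variation for the $BV$ bound; and the $C^{0,\frac{1}{4}}$ estimate by passing \eqref{CHD:unif:est:5} to the limit. The one place where you genuinely diverge is the strong convergence $\varphi^{\eps_{j}} \to \varphi$ in $C^{0,\alpha}([0,T];L^{2}(\Omega))$. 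The paper invokes Lemma~\ref{AubinLionHolder} a second time, for $\{\varphi^{\eps_{j}}\}$ with $B = L^{2}(\Omega)$, and the bulk of its proof is spent verifying the relative-compactness hypothesis for the time averages $\int_{t_{1}}^{t_{2}} \varphi^{\eps_{j}}(t)\dt$ via the Kolmogorov--Riesz theorem (uniform $L^{q}$ bounds from \eqref{CHD:unif:est:3}, translation estimates routed through $\nabla w^{\eps_{j}}$ and \eqref{Wfunctiondiff}, and tail estimates). You bypass this entirely: integrating the pointwise inequality $C_{1}\abs{\e{\varphi} - \varphi}^{2} \leq \abs{W(\e{\varphi}) - W(\varphi)}$ over $\Omega$ and taking the supremum in $t$ converts the already-established $C^{0}([0,T];L^{1}(\Omega))$ convergence of $w^{\eps_{j}}$ directly into $C^{0}([0,T];L^{2}(\Omega))$ convergence of $\varphi^{\eps_{j}}$, which you then upgrade to $C^{0,\alpha}$ for $\alpha < \frac{1}{8}$ by interpolating against the uniform H\"{o}lder bound \eqref{CHD:unif:est:5} exactly as in the derivation of \eqref{Holdernorm:conv}. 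This is shorter and avoids Kolmogorov--Riesz altogether; the paper's route would remain available if one only had $L^{1}(Q)$ rather than uniform-in-time convergence of $w^{\eps_{j}}$, but here the $C^{0,\alpha}([0,T];L^{1}(\Omega))$ convergence is in hand and your shortcut is legitimate. Your identification of $w \in \{0,1\}$ a.e.\ (directly from $\norm{\abs{\e{\varphi}}-1}_{L^{2}} \leq C\eps^{\frac{1}{2}}$ and the continuity of $W$, rather than the paper's Fatou argument applied to the limit $\varphi$) is an equivalent minor variation, as is your definition of $\Omega_{+}(t)$ as a superlevel set rather than via Lebesgue density points.
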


\begin{proof}
The estimate \eqref{CHD:unif:est:6} shows that the time translation of $\e{w}$ satisfies
\begin{align*}
\norm{\tau_{h} \e{w} - \e{w}}_{L^{1}(0,T-h;L^{1})} \leq C \abs{h}^{\frac{1}{8}} (T-h) \to 0 \text{ as } h \to 0,
\end{align*}
uniformly in $\eps$.  By the compact embedding $W^{1,1}(\Omega) \subset \subset L^{1}(\Omega)$, for every $0 \leq t_{1} < t_{2} \leq T$, the set $\{\int_{t_{1}}^{t_{2}} \e{w}(s) \ds : \eps \in (0,\eps_{2}]\}$ is relatively compact in $L^{1}(\Omega)$.  Then, together with \eqref{CHD:unif:est:2}, Lemma \ref{AubinLionHolder} and \cite[\S~6, Thm.~3]{Simon86} imply that there exists a subsequence $\{\eps_{j}\}_{j \in \N}$ and a function $w$ such that for any $\alpha \in (0,\frac{1}{8})$,
\begin{align*}
w^{\eps_{j}} \rightarrow w \text{ in } C^{0,\alpha}([0,T];L^{1}(\Omega)) \cap L^{1}(Q) \text{ and a.e. in } Q.
\end{align*}
Let $\varphi$ be the function defined by the relation $w(x,t) = W(\varphi(x,t))$.  This is well-defined as $W(\cdot)$ is bijective.  Then, by the first inequality of \eqref{Wfunctiondiff}, we obtain 
\begin{align*}
C_{1} \abs{\varphi^{\eps_{j}}(x,t) - \varphi(x,t)}^{2} \leq \abs{w^{\eps_{j}}(x,t) - w(x,t)} \text{ for a.e.~} (x,t) \in Q.
\end{align*}
Using the convergence results for $\{ w^{\eps_{j}}\}_{j \in \N}$ this then yields that $\varphi^{\eps_{j}} \to \varphi$ a.e.~in $Q$ and strongly in $L^{2}(Q)$.  To show that $\varphi^{\eps_{j}} \to \varphi$ in $C^{0,\alpha}([0,T];L^{2}(\Omega))$ with Lemma \ref{AubinLionHolder}, it suffices to show that, for any $0 \leq t_{1} < t_{2} \leq T$, the set $F := \{ \int_{t_{1}}^{t_{2}} \varphi^{\eps_{j}}(t) \dt : j \in \N \}$ is relatively compact in $L^{2}(\Omega)$.  The estimate \eqref{CHD:unif:est:3} implies that $\{\e{\varphi}\}_{\eps \in (0,\eps_{2}]}$ is bounded in $L^{\infty}(0,T;L^{q}(\Omega))$ for $q \geq 4$, then by H\"{o}lder's inequality,
\begin{equation}
\label{relativecompactnessL2}
\begin{aligned}
\bignorm{ \int_{t_{1}}^{t_{2}} \varphi^{\eps_{j}}(\cdot,t) \dt}_{L^{q}(\Omega)}^{q} & \leq \abs{t_{2} - t_{1}}^{q-1} \norm{\varphi^{\eps_{j}}}_{L^{q}(\Omega \times (t_{1}, t_{2}))}^{q}\\
&  \leq T^{q} \norm{\e{\varphi}}_{L^{\infty}(0,T;L^{q})}^{q} \leq c,
\end{aligned}
\end{equation}
for some positive constant $c$.  So, $F$ is bounded in $L^{q}(\Omega) \subset L^{2}(\Omega)$.  The claim then follows from the application of the Kolmogorov--Riesz compactness theorem \cite[Thm.~2.32]{Adams} once we show that, for all $\delta > 0$, there exist $\eta > 0$ and a subset $V$ such that for all $f \in F$, $a \in \R^{d}$ with $\abs{a} < \eta$, 
\begin{align*}
\int_{\Omega} \abs{f(x+a) - f(x)}^{2} \dx < \delta \quad \text{ and } \quad \int_{\Omega \setminus V} \abs{f(x)}^{2} \dx < \delta.
\end{align*}
Here, we have extended all $f \in F$ by zero outside $\Omega$ and retained the same notation.  For the former condition, we choose 
\begin{align*}
\eta < \frac{C_{1} \delta}{T^{2} C_{2}},
\end{align*}
where $C_{1}$ is the constant in \eqref{Wfunctiondiff} and $C_{2}$ is the constant in \eqref{CHD:unif:est:2}.  Then, by H\"{o}lder's inequality, the first inequality of \eqref{Wfunctiondiff}, Fubini's theorem, absolute continuity on lines for $W^{1,1}$ functions, we obtain (suppressing the dependence on $t$),
\begin{align*}
&  \int_{\Omega} \abs{\int_{t_{1}}^{t_{2}} \varphi^{\eps_{j}}(x+a) - \varphi^{\eps_{j}}(x) \dt}^{2} \dx  \leq \abs{t_{2} - t_{1}} \int_{t_{1}}^{t_{2}} \int_{\Omega} \abs{\varphi^{\eps_{j}}(x+a) - \varphi^{\eps_{j}}(x)}^{2} \dx \dt \\
& \leq \frac{T}{C_{1}} \int_{t_{1}}^{t_{2}} \int_{\Omega} \abs{w^{\eps_{j}}(x+a) -w^{\eps_{j}}(x)} \dx \dt \leq \frac{T}{C_{1}} \int_{0}^{1} \int_{t_{1}}^{t_{2}} \int_{\Omega} \abs{\nabla w^{\eps_{j}}(x + a \xi)} \abs{a} \dx \dt \dd \xi \\
& \leq \eta \frac{T^{2}}{C_{1}} \norm{\nabla w^{\eps_{j}}}_{L^{\infty}(0,T;L^{1})} < \delta.
\end{align*}
For the latter condition, we choose $V$ to be any compact subset of $\Omega$ such that its measure satisfies
\begin{align*}
\abs{\Omega \setminus V} < \delta^{\frac{q}{q-2}} c^{-\frac{2}{q-2}},
\end{align*}
where $c$ is the constant in \eqref{relativecompactnessL2}.  Then, by H\"{o}lder's inequality,
\begin{align*}
\int_{\Omega \setminus V} \abs{\int_{t_{1}}^{t_{2}} \varphi^{\eps_{j}}(x,t) \dt}^{2} \dx \leq \bignorm{\int_{t_{1}}^{t_{2}} \varphi^{\eps_{j}}(t) \dt}_{L^{q}(\Omega)}^{2} \abs{\Omega \setminus V}^{\frac{q-2}{q}} < \delta .
\end{align*}
By the embedding $W^{1,1}(\Omega) \subset BV(\Omega)$, the limit function $w$ belongs to $L^{\infty}(0,T;BV(\Omega))$.  Then, applying Fatou's lemma to the second estimate of \eqref{CHD:unif:est:3} for $\e{\varphi}$ shows that the limit function $\varphi$ satisfies $\abs{\varphi(x,t)} = 1$.  Using the definition of the function $W$ in Lemma \ref{lem:ModicaAnsatz:BijW} and the first identity of \eqref{Psi:basic}, it holds that $w(t) \in BV(\Omega, \{0,1\})$ for all $t \in [0,T]$.  Defining the following measurable sets
\begin{align*}
\Omega_{+}(t) := \left \{ x \in \Omega : \lim_{\delta \to 0} \frac{1}{\abs{B_{\delta}(x)}} \int_{B_{\delta}(x)} w(y,t) \dy = 1 \right \}, \quad \Omega_{-}(t) := \Omega \setminus \Omega_{+}(t)
\end{align*}
for any $t \in [0,T]$ allows us to consider $w(x,t)$ as the characteristic function $\chara_{\Omega_{+}(t)}(x)$ of $\Omega_{+}$.  Furthermore, the set $\Omega_{+} \subset Q$  has finite perimeter, as its characteristic function belongs to the space $BV(\Omega, \{0,1\})$.  Then, by the relation $\varphi = W^{-1}(w)$ we obtain the assertion that $\varphi = -1 + 2 \chara_{\Omega_{+}(t)}$.  

The assertion regarding the total variation $\abs{\nabla \chara_{\Omega_{+}(t)}}(\Omega)$  follows from applying the weak lower semicontinuity of the BV-norm to the estimate for $\nabla w^{\eps_{j}}$ in \eqref{CHD:unif:est:2}.  Meanwhile, arguing as in \cite{Chen96}, by \eqref{CHD:unif:est:5} and the strong convergence of $\varphi^{\eps_{j}}$ to $\varphi$ in $C^{0}([0,T];L^{2}(\Omega))$, it holds that
\begin{align*}
\norm{\chara_{\Omega_{+}(t)} - \chara_{\Omega_{+}(\tau)}}_{L^{1}} & = \norm{\chara_{\Omega_{+}(t)} - \chara_{\Omega_{+}(\tau)}}_{L^{2}}^{2} = \frac{1}{4} \norm{\varphi(t) - \varphi(\tau)}_{L^{2}}^{2} \\
& = \lim_{\eps_{j} \to 0} \frac{1}{4} \norm{\varphi^{\eps_{j}}(t) - \varphi^{\eps_{j}}(\tau)}_{L^{2}}^{2} \leq C \abs{t-\tau}^{\frac{1}{4}}.
\end{align*}
The proof is complete.
\end{proof}

We define $\e{e}(\e{\varphi})$ as the Ginzburg--Landau density and $\e{\xi}(\e{\varphi})$ as the discrepancy density by
\begin{align*}
\e{e}(\e{\varphi}) := \frac{1}{\eps} \Psi(\e{\varphi}) + \frac{\eps}{2} \abs{\nabla \e{\varphi}}^{2}, \quad \e{\xi}(\e{\varphi}) = \frac{\eps}{2} \abs{\nabla \e{\varphi}}^{2} - \frac{1}{\eps} \Psi(\e{\varphi}).
\end{align*}
Then, the uniform estimate \eqref{CHD:unif:est:1} implies
\begin{equation}\label{BddRadon}
\begin{aligned}
\norm{\e{e}(\e{\varphi})}_{L^{\infty}(0,T;L^{1})} +
\norm{\eps \nabla \e{\varphi} \otimes \nabla \e{\varphi}}_{L^{\infty}(0,T;L^{1})} \leq C.
\end{aligned}
\end{equation}
For $1 \leq i, k \leq d$, we introduce the linear functionals on $C^{0}_{c}(Q)$:
\begin{align}\label{defn:lambda:meas}
\inner{\e{\lambda}}{Y} := \int_{Q} \e{e}(\e{\varphi}) \, Y \dx \dt, \quad \inner{\e{\lambda}_{ik}}{Y} := \int_{Q} \eps \pd_{x_{i}} \e{\varphi} \pd_{x_{k}} \e{\varphi} \, Y \dx \dt,
\end{align}
for $Y \in C^{0}_{c}(Q)$.  The integrals are well-defined as $\e{e}(\e{\varphi})$ and $\eps \pd_{x_{i}} \e{\varphi} \pd_{x_{k}} \e{\varphi}$ are $L^{1}(Q)$-functions.  Hence we can interpret $\e{\lambda}$ and $\e{\lambda}_{ik}$, for $1 \leq i,k \leq d$, as Radon measures. 

\begin{lem}\label{lem:meas+energyIneq}
There exist a subsequence $\{\eps_{j}\}_{j \in \N}$ converging to zero, Radon measures $\lambda(x,t)$ and $\{\lambda_{ik}(x,t)\}_{1 \leq i,k \leq d}$ on $Q$, Radon measures $\lambda^{t}(x)$ and $\{\lambda_{ik}^{t}(x)\}_{1 \leq i,k \leq d}$ on $\Omega$ for a.e.~$t \in (0,T)$ such that
\begin{equation}\label{compactness:meas}
\begin{aligned}
\lambda^{\eps_{j}} \to \lambda(x,t) \text{ weakly* in } \mathcal{M}(Q), \quad & \dd \lambda(x,t) = \dd \lambda^{t}(x) \dt, \\
\lambda_{ik}^{\eps_{j}} \to \lambda_{ik}(x,t) \text{ weakly* in } \mathcal{M}(Q), \quad & \dd \lambda_{ik}(x,t) = \dd \lambda_{ik}^{t}(x) \dt.
\end{aligned}
\end{equation}
Furthermore, 
\begin{align}\label{lscBV}
\abs{ \nabla \chara_{\Omega_{+}(t)}}(\Omega)  \leq \lambda^{t}(\Omega),
\end{align} 
and there exists a function $E(t)$ such that the energy \eqref{CHD:energy} satisfies
\begin{align}\label{CH:Limit:Et}
\mathcal{E}(\varphi^{\eps_{j}}(t), \theta^{\eps_{j}}(t)) \to E(t) := \lambda^{t}(\Omega) + \int_{\Omega} \frac{1}{2} \abs{\theta(t)}^{2} \dx - \frac{\chi^{2}}{2} \abs{\Omega}
\end{align}
and for a.e.~$0 \leq \tau \leq t \leq T$ with the notation $\varphi(x,s) := -1 + 2 \chara_{\Omega_{+}(s)}(x)$, we have
\begin{equation}\label{CH:Limit:energy:ineq}
\begin{aligned}
& E(t) + \int_{\tau}^{t} \int_{\Omega} m \left (\varphi \right ) \abs{\nabla \mu}^{2} + n \left (\varphi \right ) \abs{\nabla \theta}^{2} + \K \abs{\bm{v}}^{2} \dx \ds \\
& \quad \leq E(\tau) + \int_{\tau}^{t} \int_{\Omega} \U \varphi \mu + \So \theta + \HH \left ( p - \varphi \mu - \tfrac{1}{2} \abs{\theta + \chi \varphi}^{2} \right ) \dx \ds.
\end{aligned}
\end{equation}

\end{lem}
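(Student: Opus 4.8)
The plan is to combine weak-$*$ compactness of the energy measures, a disintegration in time, Helly's selection principle for the total energy, weak lower semicontinuity of the dissipation, and a limit passage in the energy identity \eqref{CHD:energy:id}. First I would extract the measures: by \eqref{BddRadon} the functionals $\e{\lambda}$ and $\e{\lambda}_{ik}$ from \eqref{defn:lambda:meas} are bounded in $\M(Q)$ uniformly in $\eps$, with $\abs{\e{\lambda}_{ik}} \leq \eps \abs{\nabla \e{\varphi}}^{2} \dx \dt \leq 2 \e{\lambda}$ as measures. The Banach--Alaoglu theorem yields a subsequence $\{\eps_{j}\}$ and limits $\lambda, \lambda_{ik} \in \M(Q)$ with the weak-$*$ convergences in \eqref{compactness:meas}. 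Since each $\e{\lambda}$ has time-marginal $\big(\int_{\Omega} \e{e}(\e{\varphi}) \dx\big)\dt \leq C \dt$ by \eqref{CHD:unif:est:1}, the time-marginals of $\lambda$ and of $\abs{\lambda_{ik}}$ are absolutely continuous with respect to $\dt$, and the disintegration theorem \cite[Thm.~2.28]{Ambrosio} produces the families $\{\lambda^{t}\}$ and $\{\lambda^{t}_{ik}\}$ with $\dd\lambda = \dd\lambda^{t} \dt$ and $\dd\lambda_{ik} = \dd\lambda^{t}_{ik}\dt$.

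For \eqref{lscBV} I would use the Modica--Mortola bound. Since $\tilde\Psi \leq \Psi$ and $\abs{\nabla \e{w}} = \sqrt{2\tilde\Psi(\e{\varphi})}\abs{\nabla\e{\varphi}}$, Young's inequality gives $\e{e}(\e{\varphi}) \geq \abs{\nabla \e{w}}$ pointwise, hence $\e{\lambda} \geq \abs{\nabla \e{w}} \dx \dt$ as measures on $Q$. Passing to the weak-$*$ limit along $\{\eps_{j}\}$, and using the lower semicontinuity of the total variation together with $w^{\eps_{j}} \to w := \chara_{\Omega_{+}}$ in $L^{1}(Q)$ from Lemma \ref{lem:CH:weakcompactness}, yields $\lambda \geq \abs{\nabla_{x} w}$ as measures on $Q$. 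Slicing both sides in time via the disintegration and using the standard identity $\dd\abs{\nabla_{x} w}(x,t) = \dd\abs{\nabla \chara_{\Omega_{+}(t)}}(x)\,\dt$ gives \eqref{lscBV} for a.e.~$t$.

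The crux is the energy convergence \eqref{CH:Limit:Et}. I would first observe that $t \mapsto \EE(\e{\varphi}(t), \e{\theta}(t))$ has uniformly bounded variation on $[0,T]$: by \eqref{CHD:energy:id} its variation is controlled by the $L^{1}(0,T)$-norms of the dissipation and of the source terms, both bounded uniformly by the estimates of Section \ref{sec:Est}. Helly's selection theorem then provides a further subsequence and a function $E(t)$ with $\EE(\varphi^{\eps_{j}}(t), \theta^{\eps_{j}}(t)) \to E(t)$ for a.e.~$t$. Since $\varphi^{\eps_{j}}(t) \to \varphi(t)$ in $L^{2}(\Omega)$ for every $t$ with $\abs{\varphi} = 1$ a.e.~(Lemma \ref{lem:CH:weakcompactness}), one has $\norm{\varphi^{\eps_{j}}(t)}_{L^{2}}^{2} \to \abs{\Omega}$, and after a further subsequence the strong $L^{2}(Q)$-convergence of $\theta^{\eps_{j}}$ gives $\norm{\theta^{\eps_{j}}(t)}_{L^{2}}^{2} \to \norm{\theta(t)}_{L^{2}}^{2}$ for a.e.~$t$. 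Consequently the Ginzburg--Landau slices $\int_{\Omega} \big(\tfrac{1}{\eps_{j}}\Psi(\varphi^{\eps_{j}}(t)) + \tfrac{\eps_{j}}{2}\abs{\nabla\varphi^{\eps_{j}}(t)}^{2}\big) \dx$ converge a.e.~to $G(t) := E(t) - \tfrac12 \norm{\theta(t)}_{L^{2}}^{2} + \tfrac{\chi^{2}}{2}\abs{\Omega}$, and it remains to identify $G(t) = \lambda^{t}(\Omega)$. The main obstacle lies precisely here: weak-$*$ convergence of the energy measures does not by itself control their time-slices, and the constant spatial function is not an admissible test function in $C^{0}_{c}(Q)$. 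I would resolve this by testing against $\zeta(t)\eta_{k}(x)$ with $\zeta \in C^{\infty}_{c}(0,T)$, $\zeta \geq 0$, and spatial cut-offs $\eta_{k} \nearrow 1$, letting first $j \to \infty$ and then $k \to \infty$ by monotone convergence to obtain $\int_{0}^{T} \zeta \lambda^{t}(\Omega) \dt \leq \int_{0}^{T} \zeta G \dt$; the reverse inequality, which amounts to ruling out concentration of the energy density on $\pd\Omega$, follows from the homogeneous Neumann condition $\pdnu \e{\varphi} = 0$ on $\Sigma$. This gives $G(t) = \lambda^{t}(\Omega)$ a.e.~and hence \eqref{CH:Limit:Et}.

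Finally, for \eqref{CH:Limit:energy:ineq} I would pass to the limit in \eqref{CHD:energy:id} over a generic interval $(\tau, t)$. Isolating the dissipation and taking $\liminf_{j}$, the boundary energies converge to $E(t)$ and $E(\tau)$ by \eqref{CH:Limit:Et} for a.e.~$\tau < t$. The three dissipation terms are handled by weak lower semicontinuity: because $\varphi^{\eps_{j}} \to \varphi$ a.e.~while $m, n$ are continuous and bounded by \eqref{ass:mn}, one has $\sqrt{m(\varphi^{\eps_{j}})}\nabla \mu^{\eps_{j}} \rightharpoonup \sqrt{m(\varphi)}\nabla\mu$ and $\sqrt{n(\varphi^{\eps_{j}})}\nabla\theta^{\eps_{j}} \rightharpoonup \sqrt{n(\varphi)}\nabla\theta$ weakly in $L^{2}(Q)$, together with $\bm{v}^{\eps_{j}} \rightharpoonup \bm{v}$, so that the $\liminf$ of each dissipation integral dominates the corresponding limit. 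The source terms pass as genuine limits: since $\U\varphi^{\eps_{j}} \to \U\varphi$ and $\HH\varphi^{\eps_{j}} \to \HH\varphi$ strongly in $L^{2}(Q)$ while $\mu^{\eps_{j}} \rightharpoonup \mu$ weakly in $L^{2}(Q)$, the terms $\U\varphi^{\eps_{j}}\mu^{\eps_{j}}$ and $\HH\varphi^{\eps_{j}}\mu^{\eps_{j}}$ converge; moreover $\So\theta^{\eps_{j}} \to \So\theta$ and $\HH\tfrac12\abs{\theta^{\eps_{j}} + \chi\varphi^{\eps_{j}}}^{2} \to \HH\tfrac12\abs{\theta + \chi\varphi}^{2}$ by strong $L^{2}(Q)$-convergence, and $\HH p^{\eps_{j}} \to \HH p$ by \eqref{p:L2L2}, all integrated against $\chara_{(\tau,t)}$. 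Combining these yields \eqref{CH:Limit:energy:ineq}.
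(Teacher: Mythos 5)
Your proposal is correct and takes essentially the same route as the paper: weak-$*$ compactness of the Radon measures \eqref{defn:lambda:meas} plus disintegration in time, lower semicontinuity of the $BV$-seminorm for \eqref{lscBV}, an a.e.-pointwise limit $E(t)$ of the energies, and a limit passage in the energy identity over $(\tau,t)$ using weak lower semicontinuity of the dissipation and strong-times-weak convergence of the source terms. Your two refinements --- invoking Helly's selection theorem to extract the pointwise limit $E(t)$ (the paper merely asserts that such a limit can be defined along subsequences) and explicitly addressing the admissibility of constant-in-space test functions when identifying $\lambda^{t}(\Omega)$ with the limiting Ginzburg--Landau energy (which the paper uses silently in \eqref{CH:EE:identify}) --- make the write-up marginally more careful, although your claim that the Neumann boundary condition rules out energy concentration on $\pd \Omega$ would itself require justification.
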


\begin{proof}
The estimate \eqref{BddRadon} and the compactness of Radon measures yield the existence of $\lambda$ and $\lambda_{ij}$.  The decomposition of $\lambda$ into a spatial component $\lambda^{t} \in \mathcal{M}(\Omega)$ and a time component follows from the application of the disintegration theorem, see for example \cite[Proof of Prop.~3.15]{Kwak}.  In particular, the $L^{\infty}$-boundedness in time from \eqref{BddRadon} ensures the limit measure $\lambda$ is absolutely continuous in time, i.e.,
\begin{align*}
\lambda(A \times I) \to 0 \text{ whenever } \abs{I} \to 0 \text{ with measurable } I \subset [0,T]
\end{align*}
for all measurable $A \subset \Omega$, cf. \cite[p.~407]{AbelsLengeler}.  The same assertion also applies to the signed measures $\{\lambda_{ik}\}_{1 \leq i,k \leq d}$ by repeating the procedure for the positive and negative parts respectively.  

Recalling the definition of the Modica--Mortola ansatz $w^{\eps_{j}} = W(\varphi^{\eps_{j}})$, by Young's inequality, one obtains for a.e.~$t \in (0,T)$,
\begin{align*}
\abs{\nabla w^{\eps_{j}}(t)}(\Omega) = \norm{\nabla w^{\eps_{j}}(t)}_{L^{1}(\Omega)} \leq \norm{e^{\eps_{j}}(\varphi^{\eps_{j}}(t))}_{L^{1}(\Omega)}.
\end{align*}
Passing to the limit $j \to \infty$ and using the lower semicontinuity of the BV-norm yields \eqref{lscBV}.  By \eqref{varphiL2:Psi:L1} and \eqref{CHD:unif:est:1}, the function
\begin{align}\label{CH:EE:form}
\EE(\e{\varphi}(t), \e{\theta}(t)) = \int_{\Omega} \e{e}(\e{\varphi}(t)) + \frac{1}{2} \abs{\e{\theta}(t)}^{2} - \frac{\chi^{2}}{2} \abs{\e{\varphi}(t)}^{2} \dx
\end{align}
is bounded uniformly in $\eps$ for a.e.~$t \in [0,T]$, and thus we can define a pointwise limit (along subsequences)
\begin{align*}
E(t) := \lim_{j \to \infty} \EE(\varphi^{\eps_{j}}(t), \theta^{\eps_{j}}(t))\text{ for a.e. } t \in [0,T].
\end{align*}
Using a similar derivation to the energy identity \eqref{CHD:energy:id}, for any $0 \leq \tau < t \leq T$, it holds that
\begin{equation}\label{CH:epsj:energy:ineq}
\begin{aligned}
& \int_{\tau}^{t} \int_{\Omega} m(\varphi^{\eps_{j}} ) \abs{\nabla\mu^{\eps_{j}} }^{2} + n(\varphi^{\eps_{j}} ) \abs{\nabla \theta^{\eps_{j}} }^{2} + \K \abs{\bm{v}^{\eps_{j}}}^{2} \dx \ds \\
& \quad  = \int_{\tau}^{t} \int_{\Omega} \U \varphi^{\eps_{j}} \mu^{\eps_{j}} + \So \theta^{\eps_{j}} + \HH \left ( p^{\eps_{j}} - \varphi^{\eps_{j}} \mu^{\eps_{j}} - \tfrac{1}{2} \abs{\theta^{\eps_{j}} + \chi \varphi^{\eps_{j}}}^{2} \right ) \dx \ds \\
& \qquad + \EE(\varphi^{\eps_{j}}(\tau), \theta^{\eps_{j}}(\tau)) - \EE(\varphi^{\eps_{j}}(t), \theta^{\eps_{j}}(t)).
\end{aligned}
\end{equation}
For fixed $0 \leq \tau < t \leq T$ such that $E(t)$ and $E(\tau)$ are defined, passing to the limit $j \to \infty$ in the above equality and employ the weak/strong convergences leads to
\begin{align*}
\int_{\tau}^{t} \int_{\Omega} \U \varphi \mu + \So \theta + \HH \left ( p - \varphi \mu - \tfrac{1}{2} \abs{\theta + \chi \varphi}^{2} \right ) \dx \ds + E(\tau) - E(t)
\end{align*}
for the right-hand side.  Meanwhile, the boundedness and continuity of the mobility $m$, a.e.~convergence of $\varphi^{\eps_{j}}$ to $\varphi$ in $Q$ and Lebesgue's dominating convergence theorem yields that $m^{\frac{1}{2}}(\varphi^{\eps_{j}}) \bm{\zeta} \to m^{\frac{1}{2}}(\varphi) \bm{\zeta}$ strongly in $L^{2}(Q)^{d}$ for any $\bm{\zeta} \in L^{2}(Q)^{d}$.  Hence, together with the weak convergence of $\nabla \mu^{\eps_{j}}$ we obtain that $m^{\frac{1}{2}}(\varphi^{\eps_{j}}) \nabla \mu^{\eps_{j}} \to m^{\frac{1}{2}}(\varphi) \nabla \mu$ weakly in $L^{2}(Q)^{d}$.  A similar argument shows $n^{\frac{1}{2}}(\varphi^{\eps_{j}}) \nabla \theta^{\eps_{j}} \to n^{\frac{1}{2}}(\varphi) \nabla \theta$ weakly in $L^{2}(Q)^{d}$.  The inequality \eqref{CH:Limit:energy:ineq} follows from applying the weak lower semicontinuity of the $L^{2}(Q)$-norm to the left-hand side of \eqref{CH:epsj:energy:ineq}.

It remains to show the explicit form for the limit $E(t)$ in \eqref{CH:Limit:Et}.  For this purpose, consider testing \eqref{CH:EE:form} with an arbitrary test function $\delta \in C^{\infty}_{c}(0,T)$ and then passing to the limit $j \to \infty$, yielding 
\begin{equation}\label{CH:EE:identify}
\begin{aligned}
\int_{0}^{T} \delta(t) \lambda^{t}(\Omega) \dt &  = 
\lim_{j \to \infty} \int_{Q} \delta(t) e^{\eps_{j}}(\varphi^{\eps_{j}}) \dx \dt \\
& = \int_{0}^{T} \delta(t) \left ( E(t) - \int_{\Omega} \frac{1}{2} \abs{\theta(t)}^{2} - \frac{\chi^{2}}{2} \abs{\varphi(t)}^{2} \dx \right ) \dt.
\end{aligned}
\end{equation}
In the above we have used the strong convergence of $\theta^{\eps_{j}}$ in $L^{2}(Q)$ to show that
\begin{equation}\label{ptl:squareterms}
\begin{aligned}
\abs{\int_{Q} \delta \left ( \abs{\theta^{\eps_{j}}}^{2} - \abs{\theta}^{2} \right ) \dx \dt } & \leq \norm{\theta^{\eps_{j}} + \theta}_{L^{2}(Q)}  \norm{\theta^{\eps_{j}} - \theta}_{L^{2}(Q)} \norm{\delta}_{L^{\infty}(0,T)} \\
& \leq C \norm{\theta^{\eps_{j}} - \theta}_{L^{2}(Q)} \to 0,
\end{aligned}
\end{equation}
and a similar argument can be applied to pass to the limit for the term $\delta \abs{\varphi^{\eps_{j}}}^{2}$.  Note that $\abs{\varphi(x,t)}^{2} = \abs{-1 + 2 \chara_{\Omega_{+}(t)}(x)}^{2} = 1$ for a.e.~$(x,t) \in Q$, and so this yields the expression \eqref{CH:Limit:Et}.
\end{proof}

\begin{remark}
Due to the presence of the source terms in the energy identity \eqref{CH:Limit:energy:ineq}, the ``sharp interface'' energy $E(t)$ need not be monotone, compared to previous studies in the literature.
\end{remark}

\begin{lem}\label{lem:meas:varifold}
There exist $\lambda$-measurable functions $\{\pi_{l}(x,t)\}_{1 \leq l \leq d}$, and $\lambda$-measurable unit vectors $\{\bm{\nu}_{l}\}_{1 \leq l \leq d}$ such that $0 \leq \pi_{l} \leq 1$, $\sum_{l=1}^{d} \pi_{l} \leq 1$, $\sum_{k=1}^{d} \bm{\nu}_{k} \otimes \bm{\nu}_{k} = \id$ $\lambda$-a.e.~in $Q$, and the symmetric, positive semi-definite matrix $\bm{\Upsilon} = (\Upsilon_{ik})_{1 \leq i,k \leq d}$ defined as 
\begin{align}\label{Upsilon:form}
\bm{\Upsilon} := \pi_{1} \bm{\nu}_{1} \otimes \bm{\nu}_{1} + \dots + \pi_{d} \bm{\nu}_{d} \otimes \bm{\nu}_{d} \quad \lambda-\text{a.e. in } Q
\end{align}
satisfies
\begin{align}\label{lambda:relation}
\dd \lambda_{ik}(x,t) = \Upsilon_{ik}(x,t) \dd \lambda(x,t) \quad \lambda-\text{a.e. in } Q.
\end{align}
\end{lem}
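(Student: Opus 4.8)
The plan is to realise $\bm{\Upsilon}$ as the matrix of Radon--Nikodym densities $\dd\lambda_{ik}/\dd\lambda$ and then to diagonalise it pointwise. First I would establish the absolute continuity $\lambda_{ik}\ll\lambda$. This rests on the pointwise inequality
\[
\abs{\eps\,\pd_{x_i}\e\varphi\,\pd_{x_k}\e\varphi}\le\eps\abs{\nabla\e\varphi}^{2}\le 2\,\e{e}(\e\varphi),
\]
so that for every nonnegative $Y\in C^{0}_{c}(Q)$ and every $\phi$ with $\abs{\phi}\le Y$ one has $\abs{\inner{\e{\lambda}_{ik}}{\phi}}\le 2\inner{\e\lambda}{Y}$. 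Passing to the limit along the subsequence $\{\eps_{j}\}$ from Lemma~\ref{lem:meas+energyIneq} gives $\abs{\lambda_{ik}}\le 2\lambda$ as measures, hence $\lambda_{ik}\ll\lambda$. The Radon--Nikodym theorem then yields $\lambda$-measurable densities $\Upsilon_{ik}$ with $\abs{\Upsilon_{ik}}\le 2$ and $\dd\lambda_{ik}=\Upsilon_{ik}\dd\lambda$, and since $\e{\lambda}_{ik}=\e{\lambda}_{ki}$ the matrix $\bm{\Upsilon}=(\Upsilon_{ik})$ is symmetric $\lambda$-a.e.

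Next I would verify positive semi-definiteness. For fixed $\bm{a}\in\R^{d}$ and nonnegative $Y\in C^{0}_{c}(Q)$,
\[
\sum_{i,k}a_{i}a_{k}\inner{\e{\lambda}_{ik}}{Y}=\int_{Q}\eps\abs{\bm{a}\cdot\nabla\e\varphi}^{2}\,Y\dx\dt\ge 0,
\]
and letting $\eps_{j}\to 0$ gives $\int_{Q}(\bm{a}^{\top}\bm{\Upsilon}\bm{a})\,Y\dd\lambda\ge 0$ for all such $Y$, so that $\bm{a}^{\top}\bm{\Upsilon}\bm{a}\ge 0$ $\lambda$-a.e.; running $\bm{a}$ over a countable dense subset of $\R^{d}$ yields $\bm{\Upsilon}\ge 0$ $\lambda$-a.e.

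The decisive and most delicate step is the trace bound $\tr{\bm{\Upsilon}}\le 1$, which is precisely what forces $\sum_{l}\pi_{l}\le 1$ and, together with $\bm{\Upsilon}\ge 0$, each $\pi_{l}\le 1$. Writing $\sum_{i}\eps\abs{\pd_{x_i}\e\varphi}^{2}=\eps\abs{\nabla\e\varphi}^{2}=\e{e}(\e\varphi)+\e\xi(\e\varphi)$ with the discrepancy density $\e\xi=\frac{\eps}{2}\abs{\nabla\e\varphi}^{2}-\frac1\eps\Psi(\e\varphi)$, one sees that $\tr{\bm{\Upsilon}}\dd\lambda$ is the weak-* limit of $(\e\lambda+\e\xi\dx\dt)$, so that $\tr{\bm{\Upsilon}}\le 1$ $\lambda$-a.e.\ is equivalent to the limiting discrepancy measure being nonpositive. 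This is the main obstacle: I would adapt Chen's discrepancy estimate \cite{Chen96}, exploiting the uniform bound $\norm{\e\mu}_{L^{2}(Q)}\le C$ from \eqref{CHD:unif:est:7} together with the relation $\eps\Laplace\e\varphi=\eps^{-1}\Psi'(\e\varphi)-\e\mu-\chi\e\theta-\chi^{2}\e\varphi$ coming from \eqref{CHD:mu}, to show $\limsup_{\eps_{j}\to 0}\int_{Q}(\e\xi)^{+}\,Y\dx\dt\le 0$ for nonnegative $Y$, whence $\tr{\bm{\Upsilon}}\le 1$.

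Finally I would diagonalise pointwise. For $\lambda$-a.e.\ $(x,t)$ the matrix $\bm{\Upsilon}(x,t)$ is symmetric and positive semi-definite with $\tr{\bm{\Upsilon}}\le 1$, so the spectral theorem provides an orthonormal eigenbasis $\{\bm{\nu}_{l}(x,t)\}_{l=1}^{d}$ with eigenvalues $\pi_{l}(x,t)\ge 0$. Orthonormality gives $\sum_{k=1}^{d}\bm{\nu}_{k}\otimes\bm{\nu}_{k}=\id$ and $\bm{\Upsilon}=\sum_{l=1}^{d}\pi_{l}\,\bm{\nu}_{l}\otimes\bm{\nu}_{l}$, while $\sum_{l}\pi_{l}=\tr{\bm{\Upsilon}}\le 1$ together with $\pi_{l}\ge 0$ forces $0\le\pi_{l}\le 1$. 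Measurability of the $\pi_{l}$ (as ordered eigenvalues, continuous in the entries of $\bm{\Upsilon}$) and of a compatible choice of the eigenvectors $\bm{\nu}_{l}$ follows from a standard measurable-selection argument, which completes the proof.
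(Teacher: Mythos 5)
Your proposal is correct and follows essentially the same route as the paper: absolute continuity of $\lambda_{ik}$ with respect to $\lambda$ plus Radon--Nikodym, non-positivity of the limiting discrepancy measure via Chen's Theorem 3.6 (which applies here because $\mu^{\eps}+\chi\theta^{\eps}+\chi^{2}\varphi^{\eps}$ is controlled in $L^{2}$ by the uniform estimates), and a pointwise spectral decomposition of $\bm{\Upsilon}$. The only cosmetic difference is that you obtain $\lambda_{ik}\ll\lambda$ from the crude bound $\abs{\lambda_{ik}}\leq 2\lambda$ before invoking the discrepancy result, whereas the paper extracts it directly from the sharper inequality \eqref{abscts:lambda}; both are fine.
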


\begin{proof}
The proof can be found in \cite[\S~3.5]{Chen96} and \cite[\S~3.2.7]{Kwak}, and so we briefly sketch the details.  The main point is that the crucial result \cite[Thm.~3.6]{Chen96} on the non-negativity of the discrepancy measure $\xi^{\eps_{j}}(\varphi^{\eps_{j}})$ in the limit $j \to \infty$ depends only on the form of equation \eqref{CHD:mu} and can be applied in our present setting, as $\mu^{\eps_{j}} + \chi \theta^{\eps_{j}} + \chi^{2} \varphi^{\eps_{j}} \in L^{2}(\Omega)$ for a.e.~$t \in (0,T)$.  This shows that 
\begin{align}\label{abscts:lambda}
\int_{Q} \bm{Y} \otimes \bm{Z} : (\dd \lambda_{ik})_{1 \leq i,k \leq d} \leq \int_{Q} \abs{ \bm{Y}} \abs{\bm{Z}} \dd \lambda \quad \forall \bm{Y}, \bm{Z} \in C^{0}_{c}(Q; \R^{d}),
\end{align}
which implies the measures $\{\lambda_{ik}\}_{ 1 \leq i,k \leq d}$ are absolutely continuous with respect to $\lambda$, and leads to the existence of $\lambda$-measurable functions $\Upsilon_{ik}$ such that \eqref{lambda:relation} holds.  The symmetry and positive semi-definiteness of $\bm{\Upsilon}$ are inherited from the matrix $\eps_{j} \nabla \varphi^{\eps_{j}} \otimes \nabla \varphi^{\eps_{j}}$.  Then, there exists an orthonormal basis $\{\bm{\nu}_{l}\}_{1 \leq l \leq d}$ composed of eigenvectors of $\bm{\Upsilon}$ with corresponding eigenvalues $\{\pi_{l}\}_{1 \leq l \leq d}$ such that $\bm{\Upsilon}$ can be expressed as in \eqref{Upsilon:form}.  Due to \eqref{abscts:lambda}, it holds that $0 \leq (\bm{\Upsilon} \bm{\zeta}) \cdot \bm{\zeta} \leq \abs{\bm{\zeta}}^{2}$ and so no eigenvalues can be greater than $1$, while the assertion $\sum_{l=1}^{d} \pi_{l} \leq 1$ can be obtained from passing to the limit $j \to \infty$ in the following inequality:
\begin{align*}
\int_{Q} Y \, \tr{ \eps_{j} \nabla \varphi^{\eps_{j}} \otimes \nabla \varphi^{\eps_{j}} } \dx \dt \leq \int_{Q} \abs{Y} \, \left ( e^{\eps_{j}}(\varphi^{\eps_{j}}) + \xi^{\eps_{j}}(\varphi^{\eps_{j}}) \right ) \dx \dt,
\end{align*}
for $Y \in C^{0}(Q)$, leading to (using that $\{ \bm{\nu}_{l} \}_{1 \leq l \leq d}$ are orthonormal and so $\tr { \bm{\nu}_{k} \otimes \bm{\nu}_{k} } = \abs{\bm{\nu}_{k}}^{2} = 1$)
\begin{align*}
\int_{Q} Y \sum_{l=1}^{d} \Upsilon_{ll} \dd \lambda(x,t) = \int_{Q} Y \sum_{l=1}^{d} \pi_{l} \dd \lambda(x,t) \leq \int_{Q} \abs{Y} \dd \lambda(x,t),
\end{align*} 
i.e., $\sum_{l=1}^{d} \pi_{l} \dd \lambda \leq \dd \lambda$ and so $\sum_{l=1}^{d} \pi_{l} \leq 1$ $\lambda$-a.e.~in $Q$.  Furthermore, as $\{\bm{\nu}_{k}\}_{1 \leq k \leq d}$ is an orthonormal basis, a simple calculation shows that $\left (\sum_{l=1}^{d} \bm{\nu}_{l} \otimes \bm{\nu}_{l} \right ) \bm{\zeta} = \bm{\zeta}$ for any $\bm{\zeta} \in \R^{d}$ with $\bm{\zeta} = \sum_{i=1}^{d} \zeta_{i} \bm{\nu}_{i}$.  Hence, $\sum_{k=1}^{d} \bm{\nu}_{k} \otimes \bm{\nu}_{k} = \id$ $\lambda$-a.e.~in $Q$.
\end{proof}	

\section{Passing to the limit}\label{sec:pass}
We recall the following strong convergences:
\begin{align*}
\varphi^{\eps_{j}} \to -1 + 2 \chara_{\Omega_{+}} & \text{ strongly in } C^{0}([0,T];L^{2}(\Omega)) \cap L^{2}(Q), \\
\theta^{\eps_{j}} \to \theta & \text{ strongly in } L^{2}(Q).
\end{align*}
Testing \eqref{CHD:div} with an arbitrary test function $\zeta \in C^{1}(\overline{Q})$ and integrating by parts, then passing to the limit leads to
\begin{align}\label{SIM:div}
0 = \int_{Q} \bm{v} \cdot \nabla \zeta + \zeta \HH \dx \dt.
\end{align}
Then, testing \eqref{CHD:varphi} expressed as
\begin{align*}
\pd_{t}(1 + \varphi^{\eps_{j}}) + \div (\varphi^{\eps_{j}} \bm{v}^{\eps_{j}}) = \div (m(\varphi^{\eps_{j}}) \nabla \mu^{\eps_{j}}) + \U \varphi^{\eps_{j}}
\end{align*} 
with an arbitrary test function $\zeta \in C^{1}(\overline{Q})$ such that $\zeta(T) = 0$, integrating in time, integrating by parts and passing to the limit, where we use the boundedness and continuity of $m(\cdot)$ to deduce the strong convergence of $m(\varphi^{\eps_{j}}) \nabla \zeta$ to $m(-1 + 2 \chara_{\Omega_{+}}) \nabla \zeta$ in $L^{2}(Q)^{d}$ via the dominated convergence theorem, leads to \eqref{CHD:SIM:mu}.
Similarly, testing \eqref{CHD:theta} expressed as
\begin{align*}
\pd_{t} ( \theta^{\eps_{j}} + \chi(1 + \varphi^{\eps_{j}})) + \div ( ( \theta^{\eps_{j}} + \chi \varphi^{\eps_{j}}) \bm{v}^{\eps_{j}}) = \div (n(\varphi^{\eps_{j}}) \nabla \theta^{\eps_{j}}) + \So
\end{align*}
with an arbitrary test function $\zeta \in C^{1}(\overline{Q})$ such that $\zeta(T) = 0$, integrating in time, integrating by parts and passing to the limit yields \eqref{CHD:SIM:theta}.
Meanwhile, testing \eqref{CHD:Darcy} with an arbitrary $\bm{Y} \in C^{0}([0,T];C^{1}_{0}(\Omega; \R^{d}))$, and upon integrating by parts yields
\begin{align*}
0 & = \int_{Q} \left ( \K \bm{v}^{\eps_{j}} + \nabla p^{\eps_{j}} - (\mu^{\eps_{j}} + \chi \theta^{\eps_{j}}) \nabla (1+\varphi^{\eps_{j}}) - \tfrac{\chi^{2}}{2}  \nabla \abs{\varphi^{\eps_{j}}}^{2} \right )\cdot \bm{Y} \dx \dt \\
& = \int_{Q} \K \bm{v}^{\eps_{j}} \cdot \bm{Y} - \left ( p^{\eps_{j}} - \tfrac{\chi^{2}}{2} \abs{\varphi^{\eps_{j}}}^{2} \right ) \div \bm{Y} + \div \left ( \left ( \mu^{\eps_{j}} + \chi \theta^{\eps_{j}} \right ) \bm{Y} \right ) (1 + \varphi^{\eps_{j}}) \dx \dt.
\end{align*}
Passing to the limit leads to \eqref{CHD:SIM:Darcy}
once we used $\bm{Y} = \bm{0}$ on $\pd \Omega$ and the divergence theorem to deduce that
\begin{align*}
\int_{Q} \abs{\varphi^{\eps_{j}}}^{2} \div \bm{Y} \dx \dt \to \int_{Q} \abs{-1 + 2 \chara_{\Omega_{+}(t)}}^{2} \div \bm{Y} \dx \dt = \int_{Q} \div \bm{Y} \dx \dt = 0.
\end{align*}
It remains to pass to the limit in the equation \eqref{CHD:mu} and construct the varifold.  Testing \eqref{CHD:mu} with $\bm{Y} \cdot \nabla (1+ \varphi^{\eps_{j}})$, where $\bm{Y} \in C^{0}([0,T];C^{1}_{0}(\Omega; \R^{d}))$ is arbitrary, leads to 
\begin{align*}
& \int_{Q} \left ( e^{\eps_{j}}(\varphi^{\eps_{j}}) - \eps_{j} \nabla \varphi^{\eps_{j}} \otimes \nabla \varphi^{\eps_{j}} \right )  \id : \nabla \bm{Y}  \dx \dt \\
& \quad = - \int_{Q} \bm{Y} \cdot \nabla (1+\varphi^{\eps_{j}}) \left ( \mu^{\eps_{j}} + \chi \theta^{\eps_{j}} - \chi^{2} \varphi^{\eps_{j}} \right ) \dx \dt \\
& \quad = \int_{Q}  (1 + \varphi^{\eps_{j}}) \div \left ( \left ( \mu^{\eps_{j}} + \chi \theta^{\eps_{j}} \right ) \bm{Y} \right ) + \frac{\chi^{2}}{2} \abs{\varphi^{\eps_{j}}}^{2} \div \bm{Y} \dx \dt.
\end{align*}
Passing to the limit $j \to \infty$ yields for the right-hand side
\begin{align*}
\int_{Q} 2 \chara_{\Omega_{+}(t)} \div \left ( \left ( \mu + \chi \theta \right ) \bm{Y} \right ) \dx \dt.
\end{align*}
The left-hand side can be handled in a similar fashion to \cite[\S 3.5]{Chen96}, and we obtain
\begin{align*}
\int_{Q} \nabla \bm{Y} : \left ( \id - \bm{\Upsilon} \right ) \dd \lambda^{t}(x) \dt & = \int_{Q} \nabla \bm{Y} : \left ( \id - \sum_{i=1}^{d} (\pi_{i} \bm{\nu}_{i} \otimes \bm{\nu}_{i})(x,t) \right ) \dd \lambda^{t}(x) \dt \\
& = \int_{Q} \nabla \bm{Y} : \sum_{i=1}^{d} c_{i}^{t}(x) \left ( \id - (\bm{\nu}_{i} \otimes \bm{\nu}_{i})(x,t) \right ) \dd \lambda^{t}(x) \dt,
\end{align*}
where $c_{i}^{t}(x) := \pi_{i}(x,t) + \frac{1}{d-1} \left ( 1 - \sum_{j=1}^{d} \pi_{j}(x,t) \right )$.  Indeed, a short calculation using $\sum_{i=1}^{d} \bm{\nu}_{i} \otimes \bm{\nu}_{i} = \id$ and $\sum_{i=1}^{d} \id = d \id$ shows that the second equality:
\begin{align*}
& \id - \sum_{i=1}^{d} \pi_{i} \bm{\nu}_{i} \otimes \bm{\nu}_{i} = \sum_{i=1}^{d} \pi_{i} \id - \sum_{i=1}^{d} \pi_{i} \bm{\nu}_{i} \otimes \bm{\nu}_{i} + \frac{d}{d-1} \left (1 - \sum_{j=1}^{d} \pi_{j} \right ) \id  - \frac{1}{d-1} \left (1 - \sum_{j=1}^{d} \pi_{j} \right ) \id \\
& \quad = \sum_{i=1}^{d} \pi_{i}(\id - \bm{\nu}_{i} \otimes \bm{\nu}_{i}) + \frac{1}{d-1} \left ( 1 - \sum_{j=1}^{d} \pi_{j} \right ) \left ( \sum_{i=1}^{d} \id - \bm{\nu}_{i} \otimes \bm{\nu}_{i} \right ) = \sum_{i=1}^{d} c^{t}_{i}( \id - \bm{\nu}_{i} \otimes \bm{\nu}_{i}).
\end{align*} 
The properties $0 \leq \pi_{i}$ and $\sum_{i=1}^{d} \pi_{i} \leq 1$ imply that $c^{t}_{i} \geq 0$, and similarly, $c^{t}_{i} = \frac{d-2}{d-1} \pi_{i} + \frac{1}{d-1}(1 - \sum_{j \neq i} \pi_{j}) \leq \frac{d-2}{d-1} + \frac{1}{d-1} = 1$ thanks to the property $0 \leq \pi_{i} \leq 1$.  This establishes the assertion that $0 \leq c^{t}_{i} \leq 1$.  Moreover, using $\sum_{i=1}^{d} \pi_{i} \leq 1$ we easily infer that $\sum_{i=1}^{d} c^{t}_{i} = \sum_{i=1}^{d} \pi_{i} + \frac{d}{d-1} \left ( 1 - \sum_{j=1}^{d} \pi_{j} \right ) = \frac{d}{d-1} - \frac{1}{d-1} \sum_{i=1}^{d} \pi_{i} \geq 1$.

For a.e.~$t \in (0,T)$ we define a Radon measure $V^{t}$ on $\Omega \times \PP^{d-1}$ by
\begin{align}\label{Varifold:Rep}
\dd V^{t}(x,P) := \sum_{i=1}^{d} c^{t}_{i}(x) \dd \lambda^{t}(x) \, \delta_{\bm{\nu}_{i}(t,x)}(P),
\end{align}
where $\delta_{\bm{\nu}}(P)$ is the projection onto the hyperplane normal to $\bm{\nu}$.  In particular, identifying $\PP^{d-1}$ with $\mathbb{S}^{d-1}/ \{ \nu, - \nu \}$, then $\delta_{\bm{\nu}}$ is just a Dirac measure at $\bm{\nu}$, i.e., $\delta_{\bm{\nu}}(A) = 1$ if $\bm{\nu} = A$ and $0$ if $\bm{\nu} \neq A$.  Then, we define the varifold $V = V^{t} \dt$ and see that \eqref{Varifold:Rep} satisfies the representation formula \eqref{Vari:represent}.  Furthermore, by \eqref{1stVar} we have
\begin{align*}
& \int_{Q} \nabla \bm{Y} : \sum_{i=1}^{d} c_{i}^{t}(x) \left ( \id - (\bm{\nu}_{i} \otimes \bm{\nu}_{i})(x,t) \right ) \dd \lambda^{t}(x) \dt \\
& \quad =  \int_{Q \times \PP^{d-1}} \nabla \bm{Y} : (\id - P \otimes P ) \dd V^{t}(x,P) \dt = \int_{0}^{T} \inner{\delta V^{t}}{\bm{Y}} \dt,
\end{align*}
which leads to \eqref{CHD:SIM:kappa}.

\section{Sharp interface limits for other variants}\label{sec:diss}

\subsection{Zero-velocity variant}
We can easily adapt the above proof to study the sharp interface limit of \eqref{CHD} with zero velocity:
\begin{subequations}\label{CH}
\begin{alignat}{3}
\pd_{t} \e{\varphi} & = \div (m(\e{\varphi}) \nabla \e{\mu}) + \U \e{\varphi} && \text{ in } Q, \label{CH:varphi} \\
\e{\mu} & = \eps^{-1} \Psi'(\e{\varphi}) - \eps \Laplace \e{\varphi} - \chi \e{\theta} - \chi^{2} \e{\varphi} && \text{ in } Q, \\
\pd_{t} \e{\theta} + \chi \pd_{t} \e{\varphi} & = \div (n(\e{\varphi}) \nabla \e{\theta}) + \So && \text{ in } Q \label{CH:theta}, \\
\pdnu \e{\varphi} & = \pdnu \e{\mu} = \pdnu \e{\theta} = 0 && \text{ on } \Sigma, \\
\e{\varphi}(0) & = \e{\varphi}_{0}, \quad \e{\theta}(0) = \e{\sigma}_{0} - \chi \e{\varphi}_{0} && \text{ in } \Omega,
\end{alignat}
\end{subequations}

The key difference between the analysis for \eqref{CHD} and \eqref{CH} is that we obtain uniform estimates for $\{\pd_{t} \e{\varphi} \}_{\eps \in (0,1]}$ and $\{\pd_{t} \e{\theta}\}_{\eps \in (0,1]}$ in the more regular space $L^{2}(0,T;H^{1}(\Omega)')$.  This is evident from the inspection of \eqref{CH:varphi} and \eqref{CH:theta} once the analogue of the first uniform estimate \eqref{CHD:unif:est:1} is derived.  With the better regularity, we can deduce that the limit $\theta$ of the (sub)sequence $\{\theta^{\eps_{j}}\}_{j \in \N}$ also belongs to $C^{0}([0,T];L^{2}(\Omega))$, and in particular the initial condition $\theta_{0}$ is attained a.e.~in $\Omega$ and as an equality in $L^{2}(\Omega)$, rather than as in \eqref{theta:ini:attain}.  The corresponding sharp interface limit of \eqref{CH} is simply \eqref{S:+}-\eqref{S:G3} with $\HH = 0$, $\bm{v} = \bm{0}$ and neglecting the condition $\jump{p} = \kappa$ on $\Gamma$, i.e., 
\begin{subequations}\label{CH:SIM}
\begin{alignat}{3}
- \div (m \nabla \mu) = \U , \quad \pd_{t} \theta + \div ( \theta \bm{v} - n \nabla \theta) = \So & \text{ in } \Omega_{+}, \\
- \div (m \nabla \mu) = - \U, \quad \pd_{t} \theta + \div ( \theta \bm{v} - n \nabla \theta) = \So & \text{ in } \Omega_{-}, \\
\jump{\mu} = 0, \quad \jump{\theta} = 0, \quad 2 \mu + 2 \chi \theta  = \kappa & \text{ on } \Gamma,   \\
-2\velo = \jump{m \nabla \mu} \cdot \bm{\nu}, \quad -2 \chi \velo  = \jump{n \nabla \theta} \cdot \bm{\nu} & \text{ on } \Gamma, \\
m \pdnu \mu = 0, \quad n \pdnu \theta = 0 & \text{ on } \pd \Omega \setminus \Gamma.
\end{alignat}
\end{subequations}
Furthermore, in this case we can take the exponent $q$ in \eqref{Psi''lowergrowth} to be $q > 2$ like in the original analysis of Chen \cite{Chen96} as oppose to $q \geq 4$ in the current analysis for the Cahn--Hilliard--Darcy system and the analysis for the Cahn--Hilliard--Brinkman system below.  We summarize this in the following theorem.

\begin{thm}[Sharp interface limit for the zero-velocity variant]\label{thm:CH}
For $\eps > 0$, let $(\e{\varphi}, \e{\mu}, \e{\theta})$ be a solution to \eqref{CH} with initial data $(\e{\varphi}_{0}, \e{\sigma}_{0})$.  Assume that \eqref{Psi''lowergrowth} holds with an exponent $q > 2$. Then, there exists a sequence $\{\eps_{j}\}_{j \in \N}$, $\eps_{j} \to 0$ as $j \to \infty$ such that properties $\mathrm{(1)}$, $\mathrm{(2)}$, $\mathrm{(3)}$ of Theorem \ref{thm:main} hold, as well as:
\begin{enumerate}
\item[$\mathrm{(4)}$] There exist functions $\mu \in L^{2}(0,T;H^{1}(\Omega))$, $\theta \in C^{0}([0,T];L^{2}(\Omega)) \cap L^{2}(0,T;H^{1}(\Omega))$ such that \eqref{mu:theta:L2H1}-\eqref{theta:L2L2} are satisfied with $\theta(0) = \theta_{0} := \sigma_{0} + \chi - 2 \chi \chara_{\Omega_{+}(0)}$ in $L^{2}(\Omega)$.
\item[$\mathrm{(5)}$] The triplet $(V, \mu, \theta)$ is a varifold solution to \eqref{CH:SIM} with initial values $(\chara_{\Omega_{+}(0)}, \theta_{0})$ in the sense that \eqref{CHD:SIM:mu}-\eqref{CHD:SIM:kappa} and \eqref{CH:SIM:energy} with $\bm{v} = \bm{0}$, $\HH = 0$ are satisfied.
\end{enumerate}

\end{thm}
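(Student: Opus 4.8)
The plan is to re-run the argument of Sections~\ref{sec:Est}--\ref{sec:pass} with $\e{\bm{v}} \equiv \bm{0}$ and $\HH \equiv 0$, keeping track of which terms vanish and exploiting the resulting simplifications. With these choices the energy identity \eqref{CHD:energy:id} loses both the dissipation $\K\norm{\e{\bm{v}}}_{L^{2}}^{2}$ and the entire $\HH$-source term, so its right-hand side reduces to $\int_{Q_{t}} \U\e{\varphi}\e{\mu} + \So\e{\theta} \dx\ds$. This is controlled exactly as in \eqref{RHS:mean:mu}--\eqref{RHS:ZYmu:2} through the adaptation of \cite[Lem.~3.4]{Chen96} bounding $\mean{\e{\mu}}$ (which uses only the chemical-potential relation, identical to \eqref{CHD:mu}), and a Gronwall argument then yields the analogue of \eqref{CHD:unif:est:1} without the $\e{\bm{v}}$ contribution. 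Crucially, the exponent $q \geq 4$ of \eqref{ass:Psi} was invoked only in the pressure estimate \eqref{RHS:HH:Est}, where the term $\norm{\e{\varphi}}_{L^{4}}^{2} \leq C(1+\norm{\Psi(\e{\varphi})}_{L^{1}})$ appears; since that term is now absent, only the bound $\norm{\e{\varphi}}_{L^{2}}^{2} \leq k_{0}^{-1}\norm{\Psi(\e{\varphi})}_{L^{1}} + C$ from \eqref{varphiL2:Psi:L1} is needed, and this holds already for $q > 2$ as in Chen's original analysis. The remaining estimates \eqref{CHD:unif:est:2}, \eqref{CHD:unif:est:3}, the H\"{o}lder-in-time bounds \eqref{CHD:unif:est:5}--\eqref{CHD:unif:est:6} (where the absence of $\div(\e{\varphi}\e{\bm{v}})$ only removes a term), and the mean bound \eqref{CHD:unif:est:7} carry over, while the pressure estimate \eqref{CHD:unif:est:8} is simply discarded.

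The decisive new ingredient is an improved time-derivative estimate. Because the convective fluxes $\div(\e{\varphi}\e{\bm{v}})$ and $\div((\e{\theta}+\chi\e{\varphi})\e{\bm{v}})$ are gone, testing \eqref{CH:varphi} and \eqref{CH:theta} against an arbitrary $\zeta \in H^{1}(\Omega)$, integrating by parts, and using the non-degeneracy of the mobilities together with the first uniform estimate gives
\begin{align*}
\norm{\pd_{t}\e{\varphi}}_{L^{2}(0,T;H^{1}(\Omega)')} + \norm{\pd_{t}\e{\theta}}_{L^{2}(0,T;H^{1}(\Omega)')} \leq C
\end{align*}
uniformly in $\eps$, which replaces the weaker bound \eqref{CHD:unif:est:4}. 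In particular $\e{\theta}$ is bounded uniformly in $L^{2}(0,T;H^{1}(\Omega)) \cap H^{1}(0,T;H^{1}(\Omega)')$.

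Properties $\mathrm{(1)}$, $\mathrm{(2)}$, $\mathrm{(3)}$ are then obtained unchanged: the compactness of $\{\varphi^{\eps_{j}}\}$ in Lemma~\ref{lem:CH:weakcompactness}, the limit measures $\lambda,\lambda_{ik}$ in Lemma~\ref{lem:meas+energyIneq}, and the varifold representation in Lemma~\ref{lem:meas:varifold} depend solely on the estimates for $\e{\varphi}$ and the structure of the chemical-potential relation, none of which involves $\e{\bm{v}}$ or $\HH$. For $\theta$, the limit inherits $\theta \in L^{2}(0,T;H^{1}(\Omega)) \cap H^{1}(0,T;H^{1}(\Omega)')$, so the standard Gelfand-triple embedding yields $\theta \in C^{0}([0,T];L^{2}(\Omega))$. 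Moreover, the uniform bound on $\pd_{t}\e{\theta}$ gives the equicontinuity $\norm{\e{\theta}(t)-\e{\theta}(\tau)}_{H^{1}(\Omega)'} \leq C\abs{t-\tau}^{1/2}$, which with the compact embedding $L^{2}(\Omega) \hookrightarrow\hookrightarrow H^{1}(\Omega)'$ and the Arzel\`{a}--Ascoli theorem upgrades the convergence to $\theta^{\eps_{j}} \to \theta$ strongly in $C^{0}([0,T];H^{1}(\Omega)')$. Since $\e{\theta}(0) = \e{\sigma}_{0} - \chi\e{\varphi}_{0} \to \sigma_{0} + \chi - 2\chi\chara_{\Omega_{+}(0)} = \theta_{0}$ weakly in $L^{2}(\Omega)$ by \eqref{ass:initial}, hence strongly in $H^{1}(\Omega)'$, evaluating this convergence at $t=0$ identifies $\theta(0) = \theta_{0}$ as an equality in $L^{2}(\Omega)$, improving the weak attainment \eqref{theta:ini:attain}.

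Finally, passing to the limit in the weak formulations proceeds exactly as in Section~\ref{sec:pass} with every $\e{\bm{v}}$ and $\HH$ set to zero, directly producing \eqref{CHD:SIM:mu}, \eqref{CHD:SIM:theta} and \eqref{CHD:SIM:kappa} in their reduced form, while the energy inequality \eqref{CH:SIM:energy} with $\bm{v}=\bm{0}$, $\HH=0$ follows from Lemma~\ref{lem:meas+energyIneq}. I do not anticipate a genuine obstacle: the only points requiring care are the bookkeeping of which terms disappear and the verification that the relaxed exponent $q > 2$ truly suffices once the pressure and velocity dissipation are removed from the first energy estimate; the mildly delicate step is the upgrade of the time regularity of $\theta$, which is what secures the $L^{2}$-attainment of the initial datum.
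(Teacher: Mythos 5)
Your proposal is correct and follows essentially the same route as the paper's own (brief) argument in Section~\ref{sec:diss}: the two observations it rests on --- that without the convective, pressure and $\HH$ terms the time derivatives $\pd_{t}\e{\varphi}$ and $\pd_{t}\e{\theta}$ are uniformly bounded in the more regular space $L^{2}(0,T;H^{1}(\Omega)')$, which upgrades $\theta$ to $C^{0}([0,T];L^{2}(\Omega))$ and gives $L^{2}$-attainment of $\theta_{0}$, and that the hypothesis $q \geq 4$ was needed only for the velocity/pressure estimates so that $q>2$ suffices as in Chen's original analysis --- are exactly the ones the paper records. The only minor imprecision is your claim that $q\geq 4$ enters solely through \eqref{RHS:HH:Est}; it also enters the pressure bound \eqref{pressure:est} and the exponent $y = 2q/(q-2)\in(2,4]$ in the derivation of \eqref{CHD:unif:est:4}, but since every such occurrence is tied to $\e{\bm{v}}$, $\e{p}$ or $\HH$, your conclusion is unaffected.
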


\subsection{Solenoidal Brinkman variant}
We consider replacing the non-solenoidal Darcy law with a solenoidal Brinkman law, that is, consider \eqref{CHD:varphi}-\eqref{CHD:ini} with
\begin{subequations}\label{CHB}
\begin{alignat}{3}
\div \e{\bm{v}} & = 0 && \text{ in } Q, \\
-\div (2 \eta \der \e{\bm{v}}) + \K \e{\bm{v}} & = - \nabla \e{p} + (\e{\mu} + \chi \e{\theta} + \chi^{2} \e{\varphi}) \nabla \e{\varphi} &&\text{ in } Q, \label{CHB:Brink} \\
\e{\bm{v}} & = \bm{0} && \text{ on } \Sigma, \label{CHB:bc}
\end{alignat}
\end{subequations}
where $\der \bm{v} := \tfrac{1}{2} (\nabla \bm{v} + (\nabla \bm{v})^{\top})$ and $\eta > 0$ is a fixed constant.  The corresponding sharp interface limit is
\begin{subequations}\label{CHB:SIM}
\begin{alignat}{3}
\div \bm{v} = 0 , \quad - \div (2 \eta \der \bm{v}) + \K \bm{v} = - \nabla p  & \text{ in } \Omega_{+} \cup \Omega_{-}, \label{weak:SIM:B1}  \\
- \div (m \nabla \mu) = \U - \HH, \quad \pd_{t} \theta + \div ( \theta \bm{v} - n \nabla \theta) = \So - \chi \HH & \text{ in } \Omega_{+}, \\
- \div (m \nabla \mu) = - \U + \HH, \quad \pd_{t} \theta + \div ( \theta \bm{v} - n \nabla \theta) = \So + \chi \HH & \text{ in } \Omega_{-},  \\
2 \mu + 2 \chi \theta  = \kappa, \quad \jump{\mu} = 0, \quad \jump{\theta} = 0 & \text{ on } \Gamma, \\
\jump{\bm{v}} = \bm{0}, \quad \jump{p}\bm{\nu} - 2 \eta \jump{\der \bm{v}} \bm{\nu}= \kappa \bm{\nu} & \text{ on } \Gamma, \label{weak:SIM:B2} \\
2(-\velo + \bm{v} \cdot \bm{\nu}) = \jump{m \nabla \mu} \cdot \bm{\nu}, \quad 2 \chi (-\velo + \bm{v} \cdot \bm{\nu}) = \jump{n \nabla \theta} \cdot \bm{\nu} & \text{ on } \Gamma,  \\
m \pdnu \mu = 0, \quad n \pdnu \theta = 0, \quad \bm{v} = \bm{0} & \text{ on } \pd \Omega \setminus \Gamma.
\end{alignat}
\end{subequations}
In particular, the velocity $\bm{v}$ is continuous across the interface, as dictated by the condition $\jump{\bm{v}} = \bm{0}$, the Darcy law $\K \bm{v} = - \nabla p$ in \eqref{S:div} is replaced by the Brinkman law $- \div ( 2 \eta \der \bm{v}) + \K \bm{v} = - \nabla p$ and the Young--Laplace law $\jump{p} = \kappa$ in \eqref{S:G1} is replaced by the stress balance $ \jump{p} \bm{\nu} - 2 \eta \jump{ \der \bm{v}} \bm{\nu} = \kappa \bm{\nu}$.

Let us point out that for the non-solenoidal case $\div \e{\bm{v}} = \HH$, if we prescribe that $\HH$ belongs to the space $L^{\infty}(0,T; H^2_n \cap L^{\infty}_{0})$, then we infer that the pressure $\e{p}$ satisfies the Poisson equation
\begin{align*}
- \Laplace \e{p} = \K \HH - 2 \eta \Laplace \HH - \div \left ( \left ( \e{\mu} + \chi \e{\theta} + \chi^{2} \e{\varphi} \right ) \nabla \e{\varphi} \right ) \text{ in } \Omega.
\end{align*}
For the calculations in Section \ref{sec:FirstEst} in dealing with the term $\HH (\e{p} - \e{\mu} \e{\varphi})$ appearing on the right-hand side of \eqref{CHD:energy:id}, it is desirable to have a homogeneous Neumann boundary $\pdnu \e{p} = 0$ on $\pd \Omega$ for the pressure, so that we can also set $\mean{\e{p}} = 0$, and write
\begin{align*}
\e{p} = \NN \left (\K \HH - 2 \eta \Laplace \HH - \div \left ( \left ( \e{\mu} - \mean{\e{\mu}} + \chi \e{\theta} + \chi^{2} \e{\varphi} \right ) \nabla \e{\varphi} \right ) \right ) + \mean{\e{\mu}} (\e{\varphi} - \mean{\e{\varphi}}).
\end{align*}
However, in doing so we then introduce additional boundary terms of the form
\begin{align*}
\int_{\pd \Omega} 2 \eta (\der \e{\bm{v}}) \bm{\nu} \cdot \e{\bm{v}} - \e{p} \e{\bm{v}} \cdot \bm{\nu},
\end{align*} 
to the energy identity \eqref{CHD:energy:id} when we test the Brinkman equation \eqref{CHB:Brink} with $\e{\bm{v}}$, and this boundary term seems not to vanish if we prescribe $\pdnu \e{p} = \K \e{\bm{v}} \cdot \bm{\nu} - 2 \eta \Laplace \e{\bm{v}} \cdot \bm{\nu} = 0$ on $\pd \Omega$.

Hence, we consider only the solenoidal case \eqref{CHB} and the sharp interface analysis simplifies considerably.  The weak formulation for the Brinkman subsystem \eqref{CHB} is
\begin{align}\label{weak:Brink}
\int_{\Omega} 2 \eta \der \e{\bm{v}}(t) : \der \bm{\zeta} + \K \e{\bm{v}}(t) \cdot \bm{\zeta} - (\e{\mu}(t) + \chi \e{\theta}(t) + \chi^{2} \e{\varphi}(t)) \nabla \e{\varphi}(t) \cdot \bm{\zeta} \dx = 0
\end{align}
for a.e.~$t \in (0,T)$ and for all $\bm{\zeta} \in \bm{H}^{1}_{0,\sigma}(\Omega)$, where the space $\bm{H}^{1}_{0,\sigma}(\Omega)$ is the completion of $C^{\infty}_{0,\sigma}(\Omega) := \{ \bm{f} \in C^{\infty}_{0}(\Omega)^{d} \, : \, \div \bm{f} = 0 \}$ with respect to the $H^{1}(\Omega)^{d}$ norm, and the pressure is eliminated.  Testing \eqref{CHD:varphi} with $\e{\mu}$, \eqref{CHD:mu} with $\pd_{t} \e{\varphi}$, \eqref{CHD:theta} with $\e{\theta}$, and \eqref{weak:Brink} with $\e{\bm{v}}$ leads to the energy identity
\begin{align*}
& \EE(\e{\varphi}(t), \e{\theta}(t)) + \int_{Q_{t}} m(\e{\varphi}) \abs{\nabla \e{\mu}}^{2} + n(\e{\varphi}) \abs{\nabla \e{\theta}}^{2} + \K \abs{\e{\bm{v}}}^{2} + 2 \eta \abs{\der \e{\bm{v}}}^{2} \dx \ds \\
& \quad = \int_{Q_{t}} \U \e{\varphi} \e{\mu} + \So \e{\theta} \dx \ds + \EE(\e{\varphi}_{0}, \e{\theta}_{0}),
\end{align*}
where $\mathcal{E}(\e{\varphi}, \e{\theta})$ is given in \eqref{CH:EE:form}.  Then, from this energy estimate we can derive the uniform estimates \eqref{CHD:unif:est:1}-\eqref{CHD:unif:est:3} and \eqref{CHD:unif:est:7} with additionally the control $\norm{\eta^{1/2} \der \e{\bm{v}}}_{L^{2}(Q)} \leq C$, so that by Korn's inequality (recall the boundary condition \eqref{CHB:bc}) we infer that $\e{\bm{v}}$ is bounded in $L^{2}(0,T;H^{1}(\Omega)^{d})$.  

Furthermore, employing the boundedness of $\e{\bm{v}}$ in $L^{2}(0,T;L^{6}(\Omega)^{d})$, we find that $\div (\e{\varphi} \e{\bm{v}})$ and $\pd_{t} \e{\varphi}$ are bounded in $L^{2}(0,T;H^{1}(\Omega)')$.  By the interpolation inequality $\norm{f}_{L^{3}} \leq \norm{f}_{L^{2}}^{1/2} \norm{f}_{L^{6}}^{1/2}$ and the boundedness of $\e{\theta}$ in $L^{\infty}(0,T;L^{2}(\Omega)) \cap L^{2}(0,T;L^{6}(\Omega))$, we find that $\div(\e{\theta} \e{\bm{v}})$ and $\pd_{t} \e{\theta}$ are bounded in $L^{\frac{4}{3}}(0,T;H^{1}(\Omega)')$.  Then, the H\"{o}lder-in-time uniform estimates \eqref{CHD:unif:est:5}-\eqref{CHD:unif:est:6} follow from before.

We can derive estimates on a pressure variable, abusing notation and denoted as $\e{p}$ again, by standard arguments in the study of solenoidal fluid equations.  From the weak formulation \eqref{weak:Brink}, we define a distribution $\e{\bm{F}}$ as
\begin{align*}
\inner{\e{\bm{F}}}{\bm{\zeta}} & = \int_{\Omega} 2 \eta \der \e{\bm{v}}(t) : \der \bm{\zeta} + \K \e{\bm{v}}(t) \cdot \bm{\zeta} + \e{\varphi}(t) \nabla (\e{\mu}(t) + \chi \e{\theta}(t)) \cdot \bm{\zeta} \dx \\
& \quad + \int_{\Omega} \e{\varphi}(t)(\e{\mu}(t) + \chi \e{\theta}(t)) \div \bm{\zeta} + \tfrac{1}{2} \chi^{2} \abs{\e{\varphi}(t)}^{2} \div \bm{\zeta} \dx
\end{align*}
for $\bm{\zeta} \in \bm{H}^{1}_{0,\sigma}(\Omega)$.  Denoting by $\bm{H}^{-1}$ the dual space of $\bm{H}^{1}_{0,\sigma}(\Omega)$, and employing the uniform estimates obtained above, we find that $\e{\bm{F}}$ is bounded in $L^{2}(0,T;\bm{H}^{-1})$ (cf. similar calculations in \eqref{pressure:est}) and $\e{\bm{F}}$ vanishes on the subspace $C^{\infty}_{0}([0,T];C^{\infty}_{0,\sigma}(\Omega))$ by \eqref{weak:Brink}.  Applying for example \cite[\S~IV, Lem.~1.4.1]{Sohr} shows that there exists a unique $\e{p} \in L^{2}(0,T;L^{2}_{0}(\Omega))$ satisfying $\e{\bm{F}} = - \nabla \e{p}$ in the sense of distributions for a.e.~$t \in (0,T)$ and 
\begin{align}\label{CHB:press:est}
\norm{\e{p}}_{L^{2}(Q)} \leq C \norm{\e{\bm{F}}}_{L^{2}(0,T;\bm{H}^{-1})} \leq C
\end{align}
for positive constants $C$ independent of $\eps$.  We formulate the sharp interface limit for the solenoidal Cahn--Hilliard--Brinkman system in the following theorem.

\begin{thm}[Sharp interface limit for the solenoidal Brinkman system]\label{thm:CHB}
For $\eps > 0$, let $(\e{\varphi}, \e{\mu}, \e{\theta}, \e{\bm{v}})$ be a solution to \eqref{CHD:varphi}-\eqref{CHD:ini}, \eqref{weak:Brink} with initial data $(\e{\varphi}_{0}, \e{\sigma}_{0})$.  Then, there exists a sequence $\{\eps_{j}\}_{j \in \N}$, $\eps_{j} \to 0$ as $j \to \infty$ such that properties $\mathrm{(1)}$, $\mathrm{(2)}$, $\mathrm{(3)}$ of Theorem \ref{thm:main} hold, as well as:
\begin{enumerate}
\item[$\mathrm{(4)}$] There exist functions $\mu \in L^{2}(0,T;H^{1}(\Omega))$, $\theta \in C^{0}_{\mathrm{w}}([0,T];L^{2}(\Omega)) \cap L^{2}(0,T;H^{1}(\Omega))$, $\bm{v} \in L^{2}(0,T;\bm{H}^{1}_{0,\sigma}(\Omega))$, $p \in L^{2}(0,T;L^{2}_{0}(\Omega))$ such that \eqref{mu:theta:L2H1}-\eqref{theta:L2L2}, \eqref{p:L2L2}, \eqref{theta:ini:attain} and
\begin{align*}
\e{\bm{v}} & \longrightarrow \bm{v} \text{ weakly in } L^{2}(0,T;H^{1}(\Omega)^{d})
\end{align*}
are satisfied.
\item[$\mathrm{(5)}$] The quadruple $(V, \mu, \theta, \bm{v})$ is a varifold solution to \eqref{CHB:SIM} with initial values $(\chara_{\Omega_{+}(0)}, \theta_{0})$ in the sense that \eqref{CHD:SIM:div}, \eqref{CHD:SIM:mu}-\eqref{CHD:SIM:kappa} are satisfied with $\HH = 0$, and
\begin{align}\label{CHB:SIM:Brink}
0 = \int_{Q} 2 \eta \der \bm{v} : \der \bm{Y} + \K \bm{v} \cdot \bm{Y} - p \div \bm{Y} + 2 \chara_{\Omega_{+}} \div \left ( \left ( \mu + \chi \theta \right ) \bm{Y} \right ) \dx \dt,
\end{align}
holds for all $\bm{Y} \in C^{0}([0,T];C^{\infty}_{0}(\overline{\Omega}; \R^{d}))$.  Furthermore, for a.e.~$0 \leq \tau < t \leq T$ and $\varphi(x,s) = -1 + 2 \chara_{\Omega_{+}(t)}(x)$, it holds that 
\begin{equation}\label{CHB:SIM:energy}
\begin{aligned}
& \lambda^{t}(\Omega) + \frac{1}{2} \norm{\theta(t)}_{L^{2}}^{2} + \int_{\tau}^{t} \int_{\Omega} m(\varphi) \abs{\nabla \mu}^{2} + n(\varphi) \abs{\nabla \theta}^{2} + \K \abs{\bm{v}}^{2} + 2 \eta \abs{\der \bm{v}}^{2} \dx \ds \\
& \quad \leq \lambda^{\tau}(\Omega) +  \frac{1}{2} \norm{\theta(\tau)}_{L^{2}}^{2} + \int_{\tau}^{t} \int_{\Omega} \U \varphi \mu + \So \theta  \dx \ds.
\end{aligned}
\end{equation}
\end{enumerate}
\end{thm}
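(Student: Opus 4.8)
The plan is to observe that the uniform estimates derived above for the Brinkman system mirror those for the Darcy system in Theorem~\ref{thm:main}, the only genuine improvement being that Korn's inequality upgrades the control of $\e{\bm{v}}$ to a bound in $L^{2}(0,T;\bm{H}^{1}_{0,\sigma}(\Omega))$. Since the compactness statements of Section~\ref{sec:Comp} (Lemmas~\ref{lem:CH:weakcompactness}, \ref{lem:meas+energyIneq}, \ref{lem:meas:varifold}) and the passage to the limit for $\e{\varphi}$, $\e{\theta}$, $\e{\mu}$ depend only on the structure of \eqref{CHD:varphi}, \eqref{CHD:mu}, \eqref{CHD:theta} together with these uniform bounds, which are unchanged, properties $\mathrm{(1)}$--$\mathrm{(3)}$ and the convergences \eqref{mu:theta:L2H1}--\eqref{theta:L2L2}, \eqref{p:L2L2}, \eqref{theta:ini:attain}, as well as \eqref{CHD:SIM:mu}, \eqref{CHD:SIM:theta} and \eqref{CHD:SIM:kappa} with $\HH = 0$, follow verbatim. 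Thus I would only need to treat the new features: the solenoidal convergence of $\e{\bm{v}}$, the reconstruction and limit of the pressure, the viscous term in the momentum balance \eqref{CHB:SIM:Brink}, and the extra dissipation in the energy inequality \eqref{CHB:SIM:energy}.

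First I would extract a further subsequence so that $\e{\bm{v}} \rightharpoonup \bm{v}$ weakly in $L^{2}(0,T;H^{1}(\Omega)^{d})$; since each $\e{\bm{v}}$ lies in the closed subspace $\bm{H}^{1}_{0,\sigma}(\Omega)$, so does $\bm{v}$, giving $\div \bm{v} = 0$ and zero trace, hence \eqref{CHD:SIM:div} with $\HH = 0$. From the uniform bound \eqref{CHB:press:est} I would also extract $\e{p} \rightharpoonup p$ weakly in $L^{2}(Q)$ with $p \in L^{2}(0,T;L^{2}_{0}(\Omega))$. For the momentum balance I would start from the identity $\e{\bm{F}} = -\nabla \e{p}$, tested against $\bm{Y} \in C^{0}([0,T];C^{\infty}_{0}(\overline{\Omega};\R^{d}))$, so that $\inner{\e{\bm{F}}}{\bm{Y}} = \int_{\Omega} \e{p} \div \bm{Y} \dx$. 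Passing to the limit term by term: the viscous term $2\eta \der \e{\bm{v}} : \der \bm{Y}$ and the Darcy term $\K \e{\bm{v}} \cdot \bm{Y}$ converge by the weak $L^{2}(Q)$-convergence of $\der \e{\bm{v}}$ and of $\e{\bm{v}}$; for the capillary contribution I would recombine $\e{\varphi} \nabla(\e{\mu} + \chi \e{\theta}) \cdot \bm{Y} + \e{\varphi}(\e{\mu} + \chi \e{\theta}) \div \bm{Y} = \e{\varphi} \div((\e{\mu} + \chi \e{\theta}) \bm{Y})$ and, using $\int_{\Omega} \div((\e{\mu} + \chi \e{\theta}) \bm{Y}) \dx = 0$ (as $\bm{Y}$ vanishes on $\pd \Omega$), replace $\e{\varphi}$ by $1 + \e{\varphi} \to 2 \chara_{\Omega_{+}}$, invoking the strong $L^{2}(Q)$-convergence of $\e{\varphi}$ and $\e{\theta}$ and the weak $L^{2}(0,T;H^{1})$-convergence of $\e{\mu}$, $\e{\theta}$; finally $\tfrac{1}{2}\chi^{2} \abs{\e{\varphi}}^{2} \div \bm{Y} \to \tfrac{1}{2}\chi^{2} \div \bm{Y}$, which integrates to zero since $\abs{\varphi}^{2} = 1$ and $\bm{Y}$ vanishes on $\pd \Omega$. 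Collecting these limits yields \eqref{CHB:SIM:Brink}.

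For the energy inequality I would note that with $\HH = 0$ the $\eps$-level energy identity carries no pressure term on its right-hand side, so only $\int \U \e{\varphi} \e{\mu} + \So \e{\theta}$ survives there, while on the left-hand side the extra dissipation $2\eta \abs{\der \e{\bm{v}}}^{2}$ appears. Arguing as in Lemma~\ref{lem:meas+energyIneq}, I would pass $j \to \infty$: the right-hand side converges by the established weak/strong convergences, the quantity $\lambda^{t}(\Omega) + \tfrac{1}{2}\norm{\theta(t)}_{L^{2}}^{2}$ replaces the limit energy via \eqref{CH:Limit:Et}, and the dissipative terms $\K \abs{\bm{v}}^{2}$ and $2\eta \abs{\der \bm{v}}^{2}$ are recovered by weak lower semicontinuity of the $L^{2}(Q)$-norm applied to $\e{\bm{v}}$ and $\der \e{\bm{v}}$. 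This gives \eqref{CHB:SIM:energy}.

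The only genuinely new ingredient relative to Theorem~\ref{thm:main} is the treatment of the viscous Brinkman stress — both its limit passage in \eqref{CHB:SIM:Brink} and its lower-semicontinuous recovery in \eqref{CHB:SIM:energy} — together with the de Rham/Sohr reconstruction of the pressure from the solenoidal formulation \eqref{weak:Brink}. I expect this, rather than any of the transferred arguments, to be the main point requiring care, though all of it is routine once the $H^{1}$-bound on $\e{\bm{v}}$ is in hand; in particular, the absence of $\HH$ removes the delicate pressure term from the energy balance and thereby simplifies the limit considerably compared with the Darcy case.
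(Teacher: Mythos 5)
Your proposal is correct and follows essentially the same route as the paper: Korn's inequality for the $H^{1}$-bound on $\e{\bm{v}}$, the Sohr-type reconstruction of $\e{p}$ from the functional $\e{\bm{F}}$, passing to the limit in $\inner{\e{\bm{F}}}{\bm{Y}} = \inner{-\nabla \e{p}}{\bm{Y}}$ after the same recombination into $(1+\e{\varphi})\div((\e{\mu}+\chi\e{\theta})\bm{Y})$, and recovering the extra dissipation in \eqref{CHB:SIM:energy} by weak lower semicontinuity as in Lemma~\ref{lem:meas+energyIneq}. No substantive differences from the paper's argument.
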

\begin{proof}
The sharp interface energy inequality \eqref{CHB:SIM:energy} follows from a similar argument as in the proof of Lemma \ref{lem:meas+energyIneq}.  It remains to derive \eqref{CHB:SIM:Brink}, which can be viewed as the weak formulation of the Brinkman system \eqref{weak:SIM:B1}, \eqref{weak:SIM:B2}.  Returning to \eqref{weak:Brink}, integrating by parts, and using the recovery of a pressure $\e{p}$ yields
\begin{equation}\label{CHB:eps:Brinkman:weak}
\begin{aligned}
& \inner{\e{\bm{F}}}{\bm{Y}} = \inner{-\nabla \e{p}}{\bm{Y}}\\
\Rightarrow & \int_{Q} 2 \eta \der \e{\bm{v}} : \der \bm{Y} + \K \e{\bm{v}} \cdot \bm{Y} + (1 + \e{\varphi}) \div ((\e{\mu} + \chi \e{\theta}) \bm{Y}) + \tfrac{\chi^{2}}{2} \abs{\e{\varphi}}^{2} \div \bm{Y} \dx \dt \\
& \quad  = \int_{Q} \e{p} \div \bm{Y} \dx \dt
\end{aligned}
\end{equation}
for all $\bm{Y} \in C^{0}([0,T];C^{\infty}_{0}(\overline{\Omega};\R^{d}))$.  Passing to the limit along a subsequence $\{\eps_{j}\}_{j \in \N}$, $\eps_{j} \to 0$ as $j \to \infty$ and employing the convergences properties outlined in point (4) yields \eqref{CHB:SIM:Brink}.
\end{proof}

We point out that one may also consider scaling the viscosity $\eta$, which was a fixed constant, with $\eps$, i.e., $\eta = \eps^{\beta}$ for some $\beta > 0$.  It turns out that for any $\beta \in \N$, a formally matched asymptotic analysis shows that the sharp interface limit for the Brinkman variant with $\eta = \eps^{\beta}$ is \eqref{CHD:SIM}, the sharp interface limit of the Darcy variant (with $\HH = 0$).  However, with the above compactness approach we can prove this for any $\beta \in \R_{> 0}$ due to the following observations:

\begin{enumerate}
\item[(1)] Due to the choice $\eta = \eps^{\beta}$, $\norm{\e{\bm{v}}}_{L^{2}(Q)}$ and $\norm{\eps^{\frac{\beta}{2}} \der \e{\bm{v}}}_{L^{2}(Q)}$ are uniformly bounded.  
\item[(2)] We still obtain $\norm{\e{p}}_{L^{2}(Q)} \leq C$ by following an analogous computation to \eqref{CHB:press:est}.
\item[(3)] When passing to the limit in \eqref{CHB:eps:Brinkman:weak}, thanks to $\eps^{\frac{\beta}{2}} \norm{\der \e{\bm{v}}}_{L^{2}(Q)} \leq C$ and
\begin{align*}
\abs{ \int_{Q} \eps^{\beta} \der \e{\bm{v}} : \der \bm{Y} \dx \ds} \leq C \eps^{\frac{\beta}{2}} \norm{\der \bm{Y}}_{L^{2}(Q)} \to 0,
\end{align*}
we obtain \eqref{CHD:SIM:Darcy}.
\item[(4)] There exists $\bm{\xi} \in L^{2}(Q)^{d \times d}$ such that $\eps^{\frac{\beta}{2}} \der \bm{v}^{\eps_{j}} \to \bm{\xi}$ weakly in $L^{2}(Q)^{d \times d}$.  Passing to the limit in the associated energy identity yields \eqref{CH:SIM:energy} with $\HH = 0$ and an additional non-negative term $\int_{\tau}^{t} \int_{\Omega} 2 \abs{\bm{\xi}}^{2} \dx \ds$ on the left-hand side, which we can subsequently neglect and as a result recover \eqref{CH:SIM:energy} with $\HH = 0$.
\end{enumerate}
\begin{thm}[Alternate sharp interface limit for the solenoidal Brinkman system]\label{thm:CHB:Alt}
Let $\beta \in \R_{> 0}$ and set $\eta = \eps^{\beta}$.  For $\eps > 0$, let $(\e{\varphi}, \e{\mu}, \e{\theta}, \e{\bm{v}})$ be a solution to \eqref{CHD:varphi}-\eqref{CHD:ini}, \eqref{weak:Brink} with initial data $(\e{\varphi}_{0}, \e{\sigma}_{0})$.  Then, there exists a sequence $\{\eps_{j}\}_{j \in \N}$, $\eps_{j} \to 0$ as $j \to \infty$ such that properties $\mathrm{(1)}$, $\mathrm{(2)}$, $\mathrm{(3)}$, $\mathrm{(4)}$ of Theorem \ref{thm:main} hold, as well as:
\begin{enumerate}
\item[$\mathrm{(5)}$] The quadruple $(V, \mu, \theta, \bm{v})$ is a varifold solution to \eqref{CHD:SIM} with $\HH = 0$ and initial values $(\chara_{\Omega_{+}(0)}, \theta_{0})$ in the sense that \eqref{CHD:SIM:div}-\eqref{CHD:SIM:kappa} and \eqref{CH:SIM:energy} are satisfied with $\HH = 0$.
\end{enumerate}
\end{thm}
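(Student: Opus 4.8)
The plan is to run the entire Darcy argument of Sections~\ref{sec:FirstEst}--\ref{sec:pass} with $\HH = 0$, the only new feature being that the viscous term, now weighted by $\eta = \eps^\beta \to 0$, disappears in the limit, so that the Brinkman law degenerates to the Darcy law \eqref{CHD:SIM:Darcy}. I would first test the Brinkman subsystem \eqref{weak:Brink} with $\e{\bm{v}}$ together with the usual testing of \eqref{CHD:varphi}, \eqref{CHD:mu} and \eqref{CHD:theta}; since the flow is solenoidal there is neither a pressure nor an $\HH$ term on the right-hand side of the energy identity, so the source terms $\U\e{\varphi}\e{\mu}$ and $\So\e{\theta}$ are controlled exactly as in \eqref{RHS:ZYmu:2}. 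This yields the first uniform estimate \eqref{CHD:unif:est:1} (with $\HH = 0$) supplemented by $\norm{\eps^{\beta/2}\der\e{\bm{v}}}_{L^2(Q)} \le C$ (observation (1)). The crucial point is that, because $\eta = \eps^\beta$ vanishes, Korn's inequality no longer delivers a uniform $H^1$ bound for $\e{\bm{v}}$, and we are left with control of $\e{\bm{v}}$ only in $L^2(Q)$ --- precisely the situation of the Darcy analysis. Consequently the estimates \eqref{CHD:unif:est:2}--\eqref{CHD:unif:est:7}, which in Section~\ref{sec:Est} already relied only on $\e{\bm{v}} \in L^2(Q)$, transfer verbatim.

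Next I would recover a pressure $\e{p} \in L^2(0,T;L^2_0(\Omega))$ as in the Brinkman subsection, through the distribution $\e{\bm{F}}$ and \cite[\S~IV, Lem.~1.4.1]{Sohr}. The viscous contribution to $\e{\bm{F}}$ is $2\eps^\beta\der\e{\bm{v}}$, whose $\bm{H}^{-1}$-norm is $O(\eps^{\beta/2})$ by the bound of the previous step, while the remaining terms are estimated exactly as in \eqref{pressure:est}; hence $\norm{\e{p}}_{L^2(Q)} \le C$ (observation (2)). Since Lemmas~\ref{lem:CH:weakcompactness}, \ref{lem:meas+energyIneq} and \ref{lem:meas:varifold} involve only the phase-field quantities, properties (1), (2), (3) of Theorem~\ref{thm:main} hold unchanged, and a diagonal extraction produces a subsequence $\eps_j \to 0$ realising the weak limits of property (4), together with a weak limit $\eps^{\beta/2}\der\bm{v}^{\eps_j} \rightharpoonup \bm{\xi}$ in $L^2(Q)^{d\times d}$ furnished by the first step.

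For the passage to the limit I would reproduce Section~\ref{sec:pass} with $\HH = 0$: the divergence-free relation (the $\HH = 0$ case of \eqref{CHD:SIM:div}) and the identities \eqref{CHD:SIM:mu}, \eqref{CHD:SIM:theta} and \eqref{CHD:SIM:kappa} are obtained verbatim. The only term in \eqref{CHB:eps:Brinkman:weak} not already treated there is the viscous one, and
\[
\abs{\int_Q 2\eps^\beta \der\e{\bm{v}} : \der\bm{Y} \dx \dt} \le 2\eps^{\beta/2}\norm{\eps^{\beta/2}\der\e{\bm{v}}}_{L^2(Q)}\norm{\der\bm{Y}}_{L^2(Q)} \le C\eps^{\beta/2} \to 0,
\]
so it drops out; combined with $\abs{\e{\varphi}}^2 \to 1$ (which annihilates the $\tfrac{\chi^2}{2}\abs{\e{\varphi}}^2\div\bm{Y}$ term) and $1 + \e{\varphi} \to 2\chara_{\Omega_+}$, this recovers exactly \eqref{CHD:SIM:Darcy} with $\HH = 0$ (observation (3)).

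Finally, I would derive the energy inequality \eqref{CH:SIM:energy} by following the proof of Lemma~\ref{lem:meas+energyIneq}, passing to the limit in the Brinkman energy identity over an interval $[\tau,t]$. The viscous dissipation equals $\int_\tau^t\int_\Omega 2\abs{\eps^{\beta/2}\der\e{\bm{v}}}^2 \dx\ds$ and sits on the dissipative side, so by weak lower semicontinuity of the $L^2$-norm its $\liminf$ is bounded below by $\int_\tau^t\int_\Omega 2\abs{\bm{\xi}}^2 \dx\ds \ge 0$; taking $\liminf$ on the left and the limit on the right therefore gives \eqref{CH:SIM:energy} reinforced by this non-negative term on the left, which may be discarded (observation (4)). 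I expect the main --- and indeed the only genuinely new --- obstacle to be the consistent bookkeeping of this degenerating viscous contribution: one must verify both that the descent from an $H^1$ to a mere $L^2(Q)$ bound on $\e{\bm{v}}$ does not invalidate any compactness step (it does not, the Darcy estimates of Section~\ref{sec:Est} needing only $L^2(Q)$), and that the sign of the viscous dissipation in the energy identity is preserved through weak lower semicontinuity so that the residual term $\bm{\xi}$ can be neglected without reversing the inequality.
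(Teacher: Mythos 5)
Your proposal is correct and follows essentially the same route as the paper, which proves this theorem via the four observations listed immediately before its statement: the uniform bounds $\norm{\e{\bm{v}}}_{L^{2}(Q)} + \norm{\eps^{\beta/2}\der\e{\bm{v}}}_{L^{2}(Q)} \leq C$, the pressure recovery via $\e{\bm{F}}$, the vanishing of the viscous term in the weak Brinkman formulation, and the weak limit $\bm{\xi}$ of $\eps^{\beta/2}\der\bm{v}^{\eps_{j}}$ whose non-negative dissipation is discarded in the energy inequality. Your additional remark that the loss of the uniform $H^{1}$ bound on $\e{\bm{v}}$ forces a fallback to the Darcy-style time-derivative estimates (which need only $\e{\bm{v}} \in L^{2}(Q)$) is a correct and useful piece of bookkeeping that the paper leaves implicit.
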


In particular, the above result can be seen as a ``sharp interface'' analogue of \cite[Thm.~2.11]{BCG}, which asserts that the solutions to the Cahn--Hilliard--Brinkman model (with $\U = 0$ and neglecting the equation for $\theta$) converge to solutions to the Cahn--Hilliard--Darcy model (with $\HH = \U = 0$ and neglecting the equation for $\theta$) as $\eta \to 0$.

\section*{Acknowledgements}
The author would like to thank Johannes Daube for the discussion on the proofs of Lemmas \ref{lem:Psigeqabsvarphi-1square} and \ref{lem:ModicaAnsatz:BijW}, and Johannes Kampmann for proofreading the manuscript as well as the numerous valuable discussions on varifolds.  The support of a Direct Grant of CUHK (project 4053335) is gratefully acknowledged.

\end{document}